\documentclass[11pt]{article}


 \usepackage[utf8]{inputenc}
 \usepackage[T1]{fontenc}  
\usepackage[english]{babel}
\usepackage{amsfonts}

\usepackage{amsmath,amsxtra,amsthm,latexsym,mathabx}
\usepackage{amssymb} 
\usepackage{accents}

\usepackage{hyperref}

\textwidth=155mm 
\textheight=225mm \hoffset=-10mm \voffset=-20mm
\pagestyle{myheadings}

\newtheorem{thm}{Theorem}[section]
 \newtheorem{cor}[thm]{Corollary}
 \newtheorem{lem}[thm]{Lemma}
 \newtheorem{prop}[thm]{Proposition}

 \theoremstyle{definition}
 \newtheorem{defn}{Definition}[section]
 \theoremstyle{remark}
 \newtheorem{rem}{Remark}[section]

 \newtheorem{ex}[thm]{Example}
 
 \numberwithin{equation}{section}


\renewcommand{\>}{\rangle}

\newcommand{\al}{\alpha}
\newcommand{\Th}{\Theta}

\newcommand{\la}{\lambda}
\newcommand{\de}{\delta}
\newcommand{\vep}{\varepsilon}

\newcommand{\ga}{\gamma}
\newcommand{\Ga}{\Gamma}
\newcommand{\ka}{\kappa}
\newcommand{\Om}{\Omega}

\newcommand{\si}{\sigma}
\newcommand{\De}{\Delta}

\newcommand{\om}{\omega}

\newcommand{\BB}{\mathbb{B}} \newcommand{\CC}{\mathbb{C}} 
 \newcommand{\EE}{\mathbb{E}} 
 \newcommand{\GG}{\mathbb{G}} 
\newcommand{\HH}{\mathbb{H}} \newcommand{\II}{\mathbb{I}} 
 
\newcommand{\LL}{\mathbb{L}}  
\newcommand{\NN}{\mathbb{N}} 
\newcommand{\PP}{\mathbb{P}}
\newcommand{\QQ}{\mathbb{Q}}
\newcommand{\RR}{\mathbb{R}}
\newcommand{\TT}{\mathbb{T}}

\usepackage[bbgreekl]{mathbbol}
\DeclareSymbolFontAlphabet{\mathbbm}{bbold}
\DeclareSymbolFontAlphabet{\mathbb}{AMSb}%
\DeclareMathAlphabet{\mathmybb}{U}{bbold}{m}{n}

\newcommand{\ab}{\alpha}

\newcommand{\one}{\mathbbm{1}}

\newcommand{\Ac}{\mathcal{A}}
\newcommand{\Bc}{\mathcal{B}}
\newcommand{\Cc}{\mathcal{C}}

\newcommand{\Ec}{\mathcal{E}}
\newcommand{\Fc}{\mathcal{F}}

\newcommand{\Ic}{\mathcal{I}}
\newcommand{\Kc}{\mathcal{K}}
\newcommand{\Lc}{\mathcal{L}}
 \newcommand{\Nc}{\mathcal{N}} \newcommand{\Oc}{\mathcal{O}}
  
\newcommand{\Tc}{\mathcal{T}}
\newcommand{\Uc}{\mathcal{U}}

\newcommand{\Zc}{\mathcal{Z}}

  \newcommand{\Ff}{\mathfrak{F}}
 \newcommand{\Hf}{\mathfrak{H}}

 \newcommand{\Sf}{\mathfrak{S}} \newcommand{\Tf}{\mathfrak{T}} 
\newcommand{\Xf}{\mathfrak{X}} \newcommand{\Yf}{\mathfrak{Y}}
\newcommand{\af}{\mathfrak{a}} \newcommand{\bfr}{\mathfrak{b}}

\newcommand{\Dr}{\mathrm{D}}
\newcommand{\Nr}{\mathrm{N}}

\newcommand{\pr}{\mathrm{p}}  
\newcommand{\rr}{\mathrm{r}}

\newcommand{\ubf}{\mathbf{u}}
\newcommand{\vbf}{\mathbf{v}}

\newcommand{\wt}{\widetilde}
\newcommand{\wh}{\widehat}

\newcommand{\<}{\langle}
\newcommand{\uph}{\upharpoonright}
\newcommand{\imb}{\hookrightarrow}

\newcommand{\ii}{\mathrm{i}}
\newcommand{\ee}{\mathrm{e}}
\newcommand{\dd}{\mathrm{d}}
\newcommand{\pa}{\partial}

\newcommand{\fin}{\mathrm{fin}}
\newcommand{\comp}{\mathrm{comp}}

\newcommand{\disc}{\mathrm{disc}}
\newcommand{\ess}{\mathrm{ess}}
\newcommand{\sym}{\mathrm{sym}}

\DeclareMathOperator{\im}{Im}
\DeclareMathOperator{\re}{Re}
\DeclareMathOperator{\dom}{dom}

\DeclareMathOperator{\Span}{span}
\DeclareMathOperator{\Gr}{Gr}
\DeclareMathOperator{\Ncone}{\mathfrak{nc}}

\DeclareMathOperator{\Div}{div}
\DeclareMathOperator{\gradm}{\mathbf{grad}}

\DeclareMathOperator{\gradn}{\mathbf{grad}_0}
\DeclareMathOperator{\Divn}{\Div_0}

\DeclareMathOperator{\Tr}{{Tr}}
\DeclareMathOperator{\Hom}{{\mathcal{LH}}}

\newcommand{\Ao}{A}
\newcommand{\Bo}{C}
\newcommand{\A}{\mathcal{A}}

\newcommand{\M}{M}

\newcommand{\gan}{\ga_{\mathrm{n}}}

\newcommand{\x}{\mathbf{x}}
\newcommand{\n}{\mathbf{n}}
\newcommand{\Hs}{\Hf}

\newcommand{\Po}{P}
\newcommand{\So}{S}
\newcommand{\D}{\Oc}
\newcommand{\paD}{{\pa \D}}

\newcommand{\DtN}{{\mathrm{DtN}}}
\newcommand{\NtD}{{\mathrm{NtD}}}
\newcommand{\diag}{{\mathrm{diag}}}
\newcommand{\cross}{{\natural}}

\begin{document}
\title{Random acoustic boundary conditions and Weyl's law for Laplace-Beltrami operators on non-smooth boundaries} 
\author{}
\date{}
\maketitle
  
{ \center{\large Illya M. Karabash
\\[2ex] 
}   }  
  
 {\small 
\center{$^{\text{a}}$ Institute for Applied Mathematics, the University of Bonn, Bonn, Germany\\}

\center{E-mails: karabash@iam.uni-bonn.de, i.m.karabash@gmail.com\\} 
}

\medskip

\vspace{4ex}

\begin{abstract}
Motivated by engineering and photonics research on  resonators in random or uncertain environments, we study rigorous randomizations of boundary conditions for  wave equations of the acoustic-type in Lipschitz domains $\D$.
First, a parametrization of essentially all m-dissipative boundary condition by contraction operators in the boundary $L^2$-space is constructed with the use of m-boundary tuples (boundary value spaces). We consider randomizations of these contraction operators that  lead to acoustic operators random in the resolvent sense. To this end, 
the use of Neumann-to-Dirichlet maps and Krein-type resolvent formulae is crucial. We give a description of  random m-dissipative boundary conditions that produce acoustic operators with almost surely (a.s.) compact resolvents, and so, also with a.s. discrete spectra. 
For each particular applied model, one can choose a specific boundary condition from the constructed class either by means of optimization, or on the base of empirical observations.
A mathematically convenient randomization is constructed in terms of eigenfunctions of the Laplace-Beltrami operator $\De^{\pa \D}$ on the boundary $\pa \D$ of the domain.
We show that for this randomization the compactness of the resolvent is connected with the  Weyl-type asymptotics for the eigenvalues of $\De^{\pa \D}$. 
\end{abstract}

\vspace{1ex}
{\small
\noindent
MSC2020-classes: 
60H25 
35F45   
47B80   
35P05   
58J90 
47B44 
\\[0.5ex]
Keywords:  stochastic boundary condition,  random operator, random spectrum, Dirichlet-to-Neumann map, dissipative operator, m-accretive operator, open cavity, resonator, counting function
}

\medskip

{\noindent\small \textsc{Acknowledgement.} 
The author is supported by the Heisenberg Programme (project 509859995) of the German Science Foundation (DFG, Deutsche Forschungsgemeinschaft), by the Hausdorff Center for Mathematics funded by the German Science Foundation under Germany's Excellence Strategy -- EXC-2047/1 -- 390685813, 
and by the CRC 1060 `The mathematics of emergent effects' funded by the German Science Foundation at the University of
Bonn. 
}

\section{\label{s:i}Introduction}

The aim of this paper is a rigorous randomization of dissipative boundary conditions for open resonators (or cavities) modelled by wave equations in bounded domains $\D \subset \RR^d$. 
The domain $\D$, which represents the interior part of an acoustic or optical cavity, is assumed to have a  boundary $\pa \D$ of the Lipschitz regularity. 
Our motivation stems from applied physics and mathematical studies of resonators in a random  environment \cite{DMTSH14,FG20,LCvM22,FAKTN23} and the high-Q design problem for photonic crystals
\cite{LJ13,KLV17,KKV20,EK24} in the case of a coupling with uncertain external structures  \cite{LJ13,KLV17,EK22}.

Random boundary conditions in this paper are constructed  in the context of linear wave equations of the acoustic-type 
$\beta (\x) \pa_t^2  p (\x,t)  = \nabla_\x \cdot \left[ (\ab (\x))^{-1} \nabla_\x  p (\x,t) \right]$, \quad $t>0$, $ \x \in \D\subset \RR^d$,
corresponding to a deterministic, but possibly non-homogeneous and anisotropic, medium inside the resonator. The internal medium is  described by deterministic 
uniformly positive material parameters $\beta \in L^\infty (\D,\RR)$ and $\ab \in L^\infty (\D,\RR_\sym^{d \times d})$, see Section \ref{s:DisBC}. 
If $d=2$, the acoustic  equation 
is essentially equivalent to one of dimensionally reduced versions of Maxwell systems, and is used often as a  model for propagation of electromagnetic waves  \cite{FK96,ACL18}.

We restrict our attention to linear time-independent boundary conditions, but we do not assume their locality.
While deterministic local boundary conditions of Leontovich (or impedance) type are  a standard  choice for absorbing (or dissipative) boundary conditions \cite{LL84,CK13,ACL18,YI18,EK22},
the exact Dirichlet-to-Neumann (DtN) radiation boundary conditions on spherical boundaries can be expressed in the case of idealized homogeneous outer medium via spherical harmonics \cite{KG89} and are inherently  non-local (for radiation boundary conditions connected with DtN maps, see \cite{FJL21,FG20,SST24}). 
The papers of Kurula \& Zwart \cite{KZ15} and Skrepek \cite{S21}
studied wide classes of general deterministic, not necessarily local,  m-dissipative boundary conditions in the acoustic-type settings close to those of the present paper. For Maxwell systems, boundary conditions with possibly non-local deterministic impedance operators were discussed in the monograph \cite{ACL18} and studied in detail by Eller \& the author in \cite{EK22,EK24}.

In the deterministic case, boundary conditions and their coefficients are designed to take into account the interaction of interior oscillations with the structure of the outer medium near the cavity \cite{ACL18,YI18}. 
 In the case of an uncertain or stochastic ambient medium, it makes sense to perform randomization in the corresponding classes of deterministic m-dissipative  boundary conditions.

Using m-boundary tuples \cite{EK22}, we introduce in Section \ref{s:DisBC} a special parametrization of essentially all  m-dissipative boundary conditions by operators in the boundary space $L^2 (\pa \D)$, and then randomize this parametrization in such a way that the resulting stochastic generator of the acoustic semigroup has reasonable probabilistic and spectral properties. 
This gives a wide class of random boundary conditions. For particular engineering settings or for a particular model of the surrounding stochastic environment, a certain random boundary condition can be chosen from this class either by means of mathematical optimization or collecting empirical evidence. Below we formulate and justify assumptions (A1)-(A3) that we consider to be reasonable criteria for the choice of random boundary conditions.

In order to address the loss of energy through the boundary, it is convenient  if the energy
of the system is expressed via the norm of a Hilbert space where the dynamics is considered. This point of view corresponds to the 1st order acoustic system, which  we write following \cite{L13} in the Schrödinger style $ \ii \pa_t \Psi   = \af_{\ab,\beta} \Psi $ using  the 1st order spatial differential operation 
\begin{gather} \label{e:Ac}
\quad 
\text{ } \af_{\ab,\beta} : \begin{pmatrix} \vbf  \\ p \end{pmatrix} \mapsto \frac{1}{\ii} \begin{pmatrix}  \ab^{-1} \nabla p \\
 \beta^{-1} \nabla \cdot \vbf \end{pmatrix}.
\end{gather} 
The dynamics of $\Psi = \{\vbf,p\}$ is considered in the `energy space' $\LL^2_{\ab,\beta} (\D) $ that   
coincides with the orthogonal sum $ L^2 (\D,\CC^d) \oplus L^2 (\D)$ as a linear space, but
has another (equivalent) norm $ \| \cdot \|_{\LL^2_{\ab,\beta} }$ corresponding to the energy inside the resonator 
\[
 \| \{\vbf,p\}  \|_{\LL^2_{\ab,\beta} }^2  
 = ( \ab \vbf | \vbf)_{L^2 (\D,\CC^d) }  + ( \beta p  | p )_{L^2 (\D) } = \int_\D ( \ab  \vbf \vert \vbf )_{\CC^d}  + \int_\D \beta |p|^2 . 
\]

A general time-independent deterministic linear boundary condition can be written as 
$  \Cc^0 \ga_0 (p) +  \Cc^1 \ga_n (\vbf) = 0 $. The operator coefficients $\Cc^0$ and $\Cc^1$  are combined with the bounded operators of the scalar trace $\ga_0: H^1 ( \D) \to H^{1/2} (\pa \D)$ and  the normal trace $\gan : \HH (\Div, \D) \to H^{-1/2} (\pa \D)$ (see \cite{M03,CK13,ACL18}), where 
\begin{gather*} 
 \HH (\Div, \D) := \{ \ubf \in L^2 (\D,\CC^d) \ : \nabla \cdot \ubf \in L^2 (\D)\}.  
 \end{gather*}
This space and other spaces built as domains (of definition) of differential operators are equipped with the corresponding graph norms.
The assumption of dissipativity of a deterministic boundary condition can  be written rigorously in the following form:
\begin{itemize}
\item[(A1)] The differential operation $\af_{\ab,\beta} $
equipped with a boundary condition  should produce a certain m-dissipative operator $\wh \A$ in  $\LL^2_{\ab,\beta} (\D) $ (see Definition \ref{d:bc} for details).
 \end{itemize}

We use the convention of \cite{E12,EK22} concerning dissipative and m-dissipative operators (see Section \ref{s:DisBC}). 
Assumption (A1) ensures that the energy $\frac12 \| \Psi (t) \|^2_{\LL^2_{\ab,\beta}} = \frac12 \|\ee^{-\ii t \wh \A} \Psi (0) \|_{\LL^2_{\ab,\beta}}^2 $ is non-increasing in time, i.e., $(-\ii) \wh \A$ is a generator of a contraction semigroup $\ee^{-\ii t \wh \A}$, see \cite{P59,Kato,E12}.

In the case of infinite-dimensional spaces, it is difficult to put randomized operators  into standard settings of stochastic measurability of random variables \cite{DZ92}.
In order to address the measurability with respect to (w.r.t.) the underlying probability space $(\Om, \Fc,\PP)$, one adapts various specific definitions of random operators  \cite{BR72,S84,PF92}. In this paper, several such definitions are used and distinguished by special names, e.g., random bounded operators and random-m-dissipative operators, see  Section \ref{s:RBC}. 
We implement the requirement of the probabilistic measurability via the following condition:

\begin{itemize}
\item[(A2)]  The differential expression $\af_{\al,\beta}$ equipped with a randomized m-dissipative boundary condition  should define an acoustic operator $\wh \A: \om \mapsto \wh \A_\om$ that is random in the resolvent sense (we call such operators random-m-dissipative, see Definition \ref{d:rmDis}).  
 \end{itemize}


An accurate  description of random spectra in terms of stochastic point processes is possible for random  operators only if they have a purely discrete spectrum with probability 1. However,
a 1st order m-dissipative acoustic operator $\wh \A$ satisfying (A1) cannot have a purely discrete spectrum if $d \ge 2$.
Indeed, the space 
\begin{gather*}  
\text{
$\HH_0 (\Div 0,\D) = \{\ubf \in \HH (\Div,\D) : \nabla \cdot \vbf = 0, \ \ga_n \vbf = 0\}$}
\end{gather*}
is infinite-dimensional \cite{L13}. The symmetric acoustic operator $\A$ considered on the minimal reasonable domain of definition 
$\dom \A = \HH_0 (\Div,\D) \times  H_0^1 (\D) $ has an infinite-dimensional kernel $\ker \wh A  \supseteq \HH_0 (\Div 0,\D) \oplus \{0\} $.
So, the essential spectrum $\si_\ess (\wh \A)$ is non-empty for any m-dissipative extension $\wh \A$ of $\A$. 
However, any such $\wh \A$ admits an orthogonal decomposition 
with respect to  the space $\HH_0 (\Div 0,\D) \oplus \{0\}$ and its orthogonal complement 
\[
\GG_{\ab,\beta} = \LL^2_{\ab,\beta} (\D) \ominus \left( \HH_0 (\Div 0,\D) \oplus \{0\} \right) .
\]
This naturally leads to the study of the discrete spectrum of the part $\wh \A |_{\GG_{\ab,\beta}}$  of $\wh \A$ in the closed subspace $\GG_{\ab,\beta}$. In particular, if the operator $\wh \A |_{\GG_{\ab,\beta}}$
has a compact resolvent, then  it has a purely discrete spectrum $\si (\wh \A |_{\GG_{\ab,\beta}}) = \si_\disc (\wh \A |_{\GG_{\ab,\beta}})$.

One of the main goals of this paper is a construction of random boundary conditions such that the following \emph{partial compactness property} is satisfied: 
\begin{itemize}
\item[(A3)]  A random-m-dissipative acoustic operator $\wh \A$ is supposed to have the property that its part $\wh \A |_{\GG_{\ab,\beta}}$ has a compact resolvent with probability 1.
 \end{itemize}

Generally, for partial differential operators, the resolvent compactness is atypical. In the case of deterministic  boundary conditions, this can be seen from abstract results of the theory of boundary triples for symmetric operators with infinite deficiency indices \cite{GG91}.  

Stochastic settings allows us to see how much atypical the partial compactness property (A3) is. In Section \ref{s:PartComp} we provide general necessary and sufficient conditions for the validity of (A3) and give several corresponding  constructions of specific  random boundary conditions. 

Using a unitary equivalence, the statements concerning the operator $\wh \A |_{\GG_{\ab,\beta}}$ can be equivalently reformulated  in terms of the 2nd order acoustic operators $\wh \Bc$ defined via the differential expression 
$
\bfr_{\ab,\beta} : \begin{pmatrix} u  \\ p   \end{pmatrix} \mapsto \ii\begin{pmatrix} p  \\
\beta^{-1} \nabla \cdot (\ab^{-1} \nabla u)  \end{pmatrix} .
$
The associated acoustic equation takes the form $\ii \pa_t \Phi = \bfr_{\ab,\beta} \Phi$ with
 $\Phi = \{u,p\}$, where $u$ is the momentum potential, and so,  $\vbf$ has to be replaces in the random boundary conditions by $(-\ab^{-1}) \nabla u$.
In these settings, the class of random boundary conditions that we construct in order to fulfil (A2) can be  written as 
\begin{gather*}
(K+I) (I+\De^\paD)^{1/4} \ga_0 (p)  - (K-I) (I+\De^\paD)^{-1/4} \gan (\ab^{-1} \nabla u) = 0 ,
\end{gather*}
where $K$ is a random contraction in $L^2 (\pa \D)$, and $\De^\paD$
is the nonnegative Laplace-Beltrami operator on $\pa \D$ (defined as in \cite{GMMM11}).
Assumption (A3) introduces a restriction on $K$, namely,
the random operator $K+I$ has to be a.s. compact in $L^2 (\pa \D)$ (Theorem \ref{t:AM-disComp}).


We consider also in Theorem \ref{t:Imp}  a subclass of m-dissipative boundary conditions, which can be written in the form $\Zc \ga_0 (p) = \gan (\vbf)$ with the use of 
\[
\text{\emph{impedance operators}
$\Zc:\dom \Zc \subseteq H^{1/2} (\pa \D) \to H^{-1/2} (\pa \D)$. }
\]
A particularly convenient randomization of $\Zc$ is provided by the case where $\Zc$ is diagonal w.r.t.
the basis $\{y_j\}_{j\in \NN}$ of eigenfunctions of the Laplace-Beltrami operator $\De^\paD$. Assumption (A2) is satisfied, in particular, if the  diagonal entries of the corresponding matrix of $\Zc$ are random variables $\zeta_j$, $j \in \NN$,
with values in the closed right half-plane $ \overline{\CC}_\rr := \{z \in \CC : \re z \ge 0\}$.

In the case where the diagonal entries $\zeta_j$ are independent and identically distributed (i.i.d.), we  characterize the impedance operators satisfying (A3) in terms of the random variables $\Nc_{\De^{\pa \D}} (|\zeta_j|^2/\de^2)$,
where \[
\textstyle \Nc_{\De^{\pa \D}} (\la) := \# \{ k \in \NN : \mu_k \le \la\} = \sum_{\mu_k \le \la} 1 , \qquad \la \in \RR,
\]
is the counting function for eigenvalues of  the Laplace-Beltrami operator $\De^\paD$
and $\de>0$ is a small parameter, see Theorem \ref{t:iid}.
Under certain mild additional assumptions on the regularity of the boundary $\pa \D$, Theorem \ref{t:iid}  
implies a simple criterion for the partial compactness property (A3) in terms of the raw moments of $|\zeta_j|$,
see Corollary \ref{c:Weyl}.

The paper is organized as follows. The main results and some of the proof are collected in Section \ref{s:Main}. In particular, in Section \ref{s:NtDKrRes}, we formulate the results on the Neumann-to-Dirichlet (NtD) maps and Krein-type resolvent formulae, which are needed for the verification of the randomness of the acoustic operators. Especially lengthy proofs and the proofs that require substantial use of  boundary tuples,
linear relations, and the theory of extensions are postponed to subsequent Sections \ref{s:BT+proof}-\ref{s:DiscSp}. The discussion section (Section \ref{s:dis}) contains additional remarks about DtN-maps and Weyl-type asymptotics for eigenvalues of $\De^\paD$.

While differential operators with random coefficients and their stochastic spectra were intensively studied from various points of view,
including Anderson localization and stochastic homogenization (see
\cite{PF92,FKl96,K16,FG20,LO21,LOW24} and references therein), the author of the present paper was not able to find in the existing publications a systematic study of random linear boundary conditions. Effective boundary conditions associated with stochastic homogenization models for outer medium were studied analytically in \cite{FG20} in the context of the 1-dimensional wave equation. The quantitative stochastic homogenization papers \cite{LO21,LOW24}   consider 2- and 3-dimensional conductivity equations and propose the replacement of the stochastic medium outside big rectangular boxes  with  Dirichlet data constructed by a specific deterministic algorithm. Randomized Leontovich boundary conditions for Maxwell systems were discussed briefly in \cite{EK22}. Appropriately randomized boundary conditions are expected to lead to stochastic point processes of random eigenvalues. This topic is closely connected with random continuation resonances, which are often modelled by physicists via nonselfadjoint random matrices  \cite{FS15}. Continuation resonances for  bounded regions of stochastic media surrounded by a homogeneous space were studied in different settings in \cite{K16,AK21}.

\textbf{Notation.} 
Let $\Xf$, $\Xf_1$, and $\Xf_2$ be (complex) Hilbert spaces. 
By $\Lc (\Xf_1,\Xf_2)$ we denote the space of bounded (linear) operators $T : \Xf_1 \to \Xf_2$, while 
$\Hom (\Xf_1,\Xf_2)$ is the set of \emph{linear homeomorphisms} from $\Xf_1$ to $\Xf_2$.
Besides, $\Lc (\Xf) := \Lc (\Xf,\Xf)$ and $\Hom (\Xf) := \Hom (\Xf,\Xf)$.
 By $I_\Xf$ (by $0_{\Lc(\Xf)}$) the identity operator (resp., the zero operator) in $\Xf$ is denoted, although  the subscript is dropped  if the space $\Xf$ is clear from the context. In the notation for the resolvent 
$(T-\la)^{-1}=(T-\la I)^{-1}$,   the identity operator $I$ is often skipped.

By $\one$ we denote the constant function equal to $1$. For $1 \le p \le \infty$, $\ell^p (\NN)$ are the standard complex Banach $\ell^p$-spaces of sequences $\{a_j\}_{j \in \NN}$, and $\Sf_p = \Sf_p (\Xf)$  are the Schatten-von-Neumann ideals of compact operators in $\Xf$.
By $\Gr T := \{\{f,Tf\} \ : \ f \in \dom T\}$ we denote the graph of the operator $T$, and consider it as a normed space with the graph norm \cite{Kato,GG91}. We  use the natural identification of $\Gr T$ and the domain (of definition) $\dom T$ of $T$. 

The spectrum and the set of eigenvalues of an operator $T:\dom T \subseteq \Xf \to \Xf$ are denoted by $\si (T)$ and $\si_\pr (T)$, respectively. The notation $\rho (T) = \CC \setminus \si (T)$ stands for the resolvent set.
A symmetric operator $T$ is called nonnegative (and we write $T \ge 0$) if $(Tf|f)_{\Xf} \ge 0$ for all $f \in \dom T$. The notation $T_1 \le T_2$ means $T_2 -T_1 \ge 0$.  A symmetric operator is uniformly positive if $T \ge \de I$ for a certain constant $\de>0$.  For $T \in \Lc (\Xf)$, $\re T = \frac12(T+T^*)$ and $\im T = \frac{1}{2\ii} (T - T^*)$ are the real and imaginary parts of $T$, respectively.

\section{Main results of the paper}
\label{s:Main}

Let $\D$ be a domain in $\RR^d$ with $d \ge 2$, i.e., $\D$ is a non-empty bounded open connected subset of $\RR^d$.  We assume that $\D$ is a Lipschitz domain, i.e.,
its  boundary $\pa \D$ is of the Lipschitz regularity.
Hence the normal to $\partial \D$ outward unit vector-filed $\n \in L^\infty  (\pa \D, \RR^d)$  is well-defined \cite{M03,ACL18}. The $L^p$-spaces on $\pa \D$ are considered w.r.t. the surface measure of $\partial \D$.

By $L^2 (\D,\CC^d)$ we denote the Hilbert space of $\CC^d$-valued vector fields in $\D$ equipped with the inner product 
$(  \ubf | \vbf )_{L^2 (\D,\CC^d)} =  \int_{\D} \ubf \cdot \overline{\vbf}  =  \int_{\D} (  \ubf  | \vbf  )_{\CC^d} $. 
For $s > 0$, we use  the standard complex Hilbertian Sobolev spaces 
$H^s ( \D) = W^{s,2} ( \D, \CC)$ and $H^s_0 ( \D) = W^{s,2}_0 ( \D, \CC)$, as well as the corresponding spaces of distributions $H^{-s} ( \D) = W^{-s,2} ( \D, \CC)$ that are 
dual to  $H^s_0 ( \D) $ w.r.t. the pivot space $L^2 (\D)$.
On the Lipschitz boundary $\pa \D$, we use the space 
 $L^2 (\pa \D) = L^2  (\pa \D, \CC)$ of 
 scalar $L^2$-functions.
 For $s \in (0,1]$,  one can define \cite{M03,ACL18} the standard complex Hilbertian Sobolev spaces of scalar functions $H^s (\pa \D) $  and their dual spaces $H^{-s} (\pa \D) $  w.r.t. the pivot space $H^0 (\pa \D) =L^2 (\pa \D)$.

\subsection{Deterministic m-dissipative boundary conditions for acoustic systems}
\label{s:DisBC}

By  $\RR_{\sym}^{d\times d}$, a Banach space of real symmetric $d\times d$-matrices (with any norm) is denoted. 

Let the material parameters $\ab  = (\ab_{j,k} )_{j,k=1}^d \in L^\infty (\D, \RR_{\sym}^{d\times d})$ and $\beta \in L^\infty (\D,\RR)$ be uniformly positive. The latter means that $\beta (\x) \ge \beta_0 $ and $\ab (\x) \ge \al_0 \II $ with certain positive constants $\beta_0, \al_0>0$ for almost all (a.a.) $\x \in \D$. In the inequality $\ab (\x) \ge \al_0 \II $,   
$\ab (\x)$ and  the identity $d\times d$-matrix $\II$ are identified with selfadjoint operators
in $\CC^d$.
By $\ab^{-1} (\cdot) \in L^\infty (\D, \RR_{\sym}^{d\times d}) $, we denote the pointwise inversion  $ (\ab (\x))^{-1}$, $\x \in \D$, for the matrix-function $\ab (\cdot)$.

The  Hilbert space of vector fields $ \LL^2_\ab = \LL^2_\ab (\D)$ coincides with the Hilbert space 
$ L^2 (\D,\CC^d) $ as a linear space, but is equipped with the weighted norm $ \| \cdot \|_{\LL^2_\ab}$ defined by
$  \| \vbf  \|_{\LL^2_\ab}^2 =  ( \ab \vbf | \vbf)_{L^2 (\D,\CC^d) }$, i.e., 
 $\LL^2_\ab (\D) = (L^2 (\D,\CC^d), \| \cdot \|_{\LL^2_\ab}).$ 
We use the weighted Hilbert space of $\CC$-valued functions 
$L^2_\beta (\D)$ with 
$\| f  \|_{L^2_\beta} :=  (( \beta f | f)_{L^2 (\D) })^{1/2} $.
The Hilbert space $\LL^2_{\ab,\beta} (\D)$ is 
\[
\text{the orthogonal sum } \quad \LL^2_{\ab,\beta} (\D)  = \LL^2_{\ab,\beta} := \LL^2_{\ab} (\D) \oplus L^2_\beta (\D).
 \]

The aim of this subsection is to describe all m-dissipative operators in $ \LL^2_{\ab,\beta} (\D)$ that can be associated with the 1st order acoustic differential expression $\af_{\ab,\beta}$ of \eqref{e:Ac} by means of a choice of a certain  boundary condition.

A (linear) operator $T:\dom T \subseteq \Xf \to \Xf$ in a Hilbert space $\Xf$ is called  \emph{m-dissipative} if $\CC_+ := \{z \in \CC : \im z >0\}$ 
is a subset of its resolvent set $\rho (T)$
and 
$\| (T- z)^{-1}\| \le (\im z)^{-1}$ for all $z \in \CC_+ $ (see, e.g., \cite{E12,EK22}, and note that there is a variety of other  conventions \cite{GG91,DZ92,Kato,KZ15}). The set of m-dissipative operators is a subset in the class of dissipative operators. An  operator $T$ is called \emph{dissipative 
(accretive)} if $\im (Tf|f)_{\Xf} \le 0$ (resp., $\re (Tf|f)_{\Xf} \ge 0$) 
for all $f \in \dom T$.
A dissipative operator in $\Xf$ is \emph{maximal dissipative} if it is not a proper restriction of another dissipative operator in $\Xf$. 
According to Phillips' criteria of m-dissipativity \cite{P59},
\begin{multline}\label{e:CrM-dis1}
\text{an operator $T$ is m-dissipative if and only if $T$ is closed and maximal dissipative} \\
\text{(or, equivalently, if and only if $T$ is densely defined and maximal dissipative).}
\end{multline}
Lossy systems are described in these settings via the following criterion:
$T$ is m-dissipative if and only if $(-\ii) T$ is a generator of a contraction semigroup  \cite{P59,Kato,E12}.
Note that in our settings a spectrum $\si (T)$ of an m-dissipative operator lies in the closure 
$\overline{\CC}_- $ of the lower complex half-plane $\CC_- := \{\la \in \CC : \im \la <0\}$.

The gradient operator $\gradm : f \mapsto \nabla f $ is considered as an operator from $L^2 (\D)$ to $L^2 (\D,\CC^d)$ with the maximal (natural) domain $H^1 (\D)$.
The divergence operator 
$
\Div : \ubf \mapsto \nabla \cdot \ubf$ 
from $L^2 (\D,\CC^d)$ to $L^2 (\D)$  is considered on its maximal domain  
\[
 \HH (\Div, \D) := \{ \ubf \in L^2 (\D,\CC^d)  : \nabla \cdot \ubf \in L^2 (\D)\}.
\] 
The normal trace    
$\gan : \vbf  \mapsto \n \cdot \vbf (x) \uph_{\pa \D}$, which is understood as a continuous operator $\ga_n \in \Lc \left(\HH (\Div, \D), H^{-1/2} (\pa \D)\right)$,  is surjective.   The scalar trace $\ga_0 \in \Lc (H^1 (\D), H^{1/2} (\pa \D))$ defined by $\ga_0 : p \mapsto p\!\uph_{ \pa \D}$ is also a surjective operator (see  \cite{M03,ACL18}).
The operator $\gradn:= -\Div^*$ is a restriction of $\gradm$ to $\dom \gradn = H^1_0 (\D) $ and has the adjoint $\gradn^*= -\Div$.
So,  $\HH (\Div, \D)$ is a Hilbert space (with the graph norm of the operator $\Div$). 
The closed operator $\Div_0 : \HH_0 (\Div,\D) \subset \LL^2 (\D)   \to L^2 (\D)$ 
defined by $\Div_0 := \gradm^*$
is a restriction of the operator $\Div$ to a narrower domain 
 $\HH_0 (\Div,\D) = \{\ubf \in \HH (\Div, \D) \ : \ \gan (\ubf) = 0\}$, which is a closed subspace of $\HH (\Div, \D) $ (see  \cite{M03}).

With the 1st order acoustic differential expression $\af_{\ab,\beta}$ (see  \eqref{e:Ac}), one can associate  in $ \LL^2_{\ab, \beta} (\D)$  the following closed symmetric operator $\A$ \cite{L13}:
\begin{gather}\label{e:A}
\A \Psi = \A \begin{pmatrix} \vbf \\ p \end{pmatrix} = \frac1\ii \begin{pmatrix} 0 &  \ab^{-1} \gradn \\
 \beta^{-1} \Divn & 0 \end{pmatrix} \begin{pmatrix} \vbf \\ p \end{pmatrix} .
\end{gather}
Here $\Psi = \{  \vbf , p \} \in \dom \A = \HH_0 (\Div,\D) \times  H_0^1 (\D) $. The adjoint operator $\A^*$ is defined by the same differential expression $\af_{\ab,\beta}$, but has a wider domain, $\dom \A^* = \HH (\Div,\D) \times H^1 (\D)$.

\begin{defn} \label{d:bc}
Let operators 
$\Ec_j:\dom \Ec_j \subseteq \dom \A^* \to \Yf$, $j=0,1$, have a certain linear space $\Yf$ as their target space.
An acoustic operator $\wh \A$ associated with an 'abstract condition' $  \Ec_0 \Psi +  \Ec_1 \Psi = 0 $ is defined as the restriction  $\wh \A = \A^* \uph_{\dom \wh A}$
of the operator $\A^*$ to 
\[
\dom \wh \A = \{ \Psi \in \dom \A^* \cap \dom \Ec_0 \cap \dom \Ec_1 \ : \ \Ec_0 \Psi +  \Ec_1 \Psi = 0 \} .
\]
We shall say that $  \Ec_0 \Psi +  \Ec_1 \Psi = 0 $ is an \emph{m-dissipative boundary condition}  if $\dom \A \subseteq \dom \wh \A$ and the operator $\wh \A$ is an m-dissipative operator in $\LL^2_{\ab,\beta} (\D)$.
\end{defn}

For $T \in \Lc \left(L^2 (\pa \D), H^{\pm1/2} (\pa \D) \right)$, we denote by $T^\cross \in \Lc \left( H^{\mp 1/2} (\pa \D), L^2 (\pa \D) \right)$ the adjoint operator w.r.t the (sesquilinear) $L^2 (\pa \D)$-pairing 
$\<\cdot,\cdot\>_{L^2 (\pa \D)}$  of $H^{\pm 1/2} (\pa \D)$ with $ H^{\mp 1/2} (\pa \D)$.
In this case, $(T^\cross)^\cross = T$ 
(for definitions concerning the $L^2 (\pa \D)$-pairing  and $\cross$-adjoint operators, see \cite{EK22} and, in the present paper,  Section \ref{s:MBT}). 

By $\Hom (\Xf_1,\Xf_2)$, we denote the set of linear homeomorphisms from $\Xf_1$ to $\Xf_2$. 
If $T \in \Hom \left(L^2 (\pa \D), H^{1/2} (\pa \D) \right) $, then $T^\cross \in \Hom \left( H^{- 1/2} (\pa \D), L^2 (\pa \D) \right)$ and $(T^{-1})^\cross = (T^\cross)^{-1}$ \cite{EK22}.

Recall that $K \in \Lc (\Xf) $ is a \emph{contraction} (in $\Xf$) if $\|K\| \le 1$, where $\|\cdot\| := \|\cdot\|_{\Lc (\Xf)}$.
The following  theorem essentially describes all m-dissipative boundary conditions for the acoustic system (up to equivalent transformations of boundary conditions that do not change $\dom \wh A$).

\begin{thm} \label{t:AM-dis} 
Let us fix a certain mutually $\cross$-adjoint  pair of linear homeomorphisms
\[
\text{$V \in \Hom \left(L^2 (\pa \D), H^{1/2} (\pa \D) \right)$ and $V^\cross \in \Hom \left( H^{-1/2} (\pa \D), L^2 (\pa \D) \right)$.}
\]
Then following two statements are equivalent:
\begin{itemize}
\item[(i)] An operator $\wh \A$ is an m-dissipative extension of the symmetric acoustic operator $\A$. 
\item[(ii)] There exists a contraction $K$ in $L^2 (\pa \D)$ such that $\wh \A $ is the acoustic operator associated (in the sense of Definition \ref{d:bc}) with the boundary condition 
 \begin{gather} \label{e:mdisBC}
(K+I_{L^2 (\pa \D)}) V^{-1} \ga_0 (p)  + (K-I_{L^2 (\pa \D)}) V^\cross \gan (\vbf) = 0 .
\end{gather}
\end{itemize}
This equivalence  establishes a bijective correspondence between 
contractions $K$ in $L^2 (\pa \D)$ and m-dissipative extensions $\wh \A$ of $\A$. Moreover, $\wh \A = \wh \A^*$ if and only if $K$ is a unitary operator.
\end{thm}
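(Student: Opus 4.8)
The plan is to realize $\af_{\ab,\beta}$ equipped with a boundary condition as a restriction of $\A^*$ governed by an m-boundary tuple with boundary space $L^2(\pa\D)$, and then to quote the abstract correspondence between m-dissipative restrictions and contractions. First I would record the Lagrange (Green) identity for $\A^*$: for $\Psi=\{\vbf,p\}$ and $\Phi=\{\wbf,q\}$ in $\dom\A^*=\HH(\Div,\D)\times H^1(\D)$, unfolding the weighted inner products (the weights $\ab$, $\beta$ cancelling against $\ab^{-1}$, $\beta^{-1}$ in $\af_{\ab,\beta}$) and integrating by parts for $\Div$ and $\gradm$ on the Lipschitz domain $\D$ (legitimate by \cite{M03}), one obtains
\[
(\A^*\Psi\,|\,\Phi)_{\LL^2_{\ab,\beta}}-(\Psi\,|\,\A^*\Phi)_{\LL^2_{\ab,\beta}}=\tfrac{1}{\ii}\bigl(\langle\ga_0 p,\gan\wbf\rangle_{L^2(\pa\D)}+\langle\gan\vbf,\ga_0 q\rangle_{L^2(\pa\D)}\bigr).
\]
Setting $\Gamma_0\Psi:=V^{-1}\ga_0(p)$ and $\Gamma_1\Psi:=V^\cross\gan(\vbf)$, which are bona fide elements of $L^2(\pa\D)$ by the homeomorphism hypotheses on $V$ and $V^\cross$, and using the defining relation $\langle Vx,z\rangle_{L^2(\pa\D)}=(x\,|\,V^\cross z)_{L^2(\pa\D)}$ of $\cross$-adjoints, the right-hand side rewrites as $\tfrac1\ii\bigl((\Gamma_0\Psi\,|\,\Gamma_1\Phi)_{L^2(\pa\D)}+(\Gamma_1\Psi\,|\,\Gamma_0\Phi)_{L^2(\pa\D)}\bigr)$.

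Next I would check that $(L^2(\pa\D),\Gamma_0,\Gamma_1)$ is an m-boundary tuple for $\A^*$ in the sense of \cite{EK22}. Surjectivity of the joint map $(\Gamma_0,\Gamma_1)\colon\dom\A^*\to L^2(\pa\D)\times L^2(\pa\D)$ follows because $\ga_0$ maps $H^1(\D)$ onto $H^{1/2}(\pa\D)$, $\gan$ maps $\HH(\Div,\D)$ onto $H^{-1/2}(\pa\D)$, these act on the two independent components of $\Psi$, and $V^{-1}$, $V^\cross$ are homeomorphisms. Moreover $\ker\Gamma_0\cap\ker\Gamma_1=\{\vbf\in\HH(\Div,\D):\gan\vbf=0\}\times\{p\in H^1(\D):\ga_0 p=0\}=\HH_0(\Div,\D)\times H_0^1(\D)=\dom\A$. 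In particular, any restriction $\wh\A$ of $\A^*$ cut out by a homogeneous linear relation between $\Gamma_0\Psi$ and $\Gamma_1\Psi$ automatically extends $\A$, since $\{0,0\}$ lies in that relation.

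With the tuple normalized, I would invoke the extension theory for m-boundary tuples (this is where the linear-relation machinery of the later sections is needed): the assignment $\wh\A\mapsto\Theta:=\{\{\Gamma_0\Psi,\Gamma_1\Psi\}:\Psi\in\dom\wh\A\}$ is a bijection from the m-dissipative extensions $\wh\A$ of $\A$ onto the m-dissipative linear relations $\Theta$ in $L^2(\pa\D)$, with $\wh\A=\wh\A^*$ exactly when $\Theta=\Theta^*$. Finally I would use the Cayley-transform parametrization of m-dissipative relations in a Hilbert space: $\Theta$ is m-dissipative if and only if $\Theta=\{\{f,g\}:(K+I)f+(K-I)g=0\}$ for a unique contraction $K$, and $\Theta$ is self-adjoint if and only if $K$ is unitary; one verifies this by the substitution $u=f+g$, which gives $f=\tfrac12(I-K)u$, $g=\tfrac12(I+K)u$ and $\re(f\,|\,g)=\tfrac14(\|u\|^2-\|Ku\|^2)$, matched against the bound $\im(\wh\A\Psi\,|\,\Psi)_{\LL^2_{\ab,\beta}}=-\re(\Gamma_0\Psi\,|\,\Gamma_1\Psi)\le 0$ obtained from the Green identity by taking $\Phi=\Psi$. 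Substituting $f=\Gamma_0\Psi=V^{-1}\ga_0(p)$ and $g=\Gamma_1\Psi=V^\cross\gan(\vbf)$ turns "$\{f,g\}\in\Theta$" into precisely the boundary condition \eqref{e:mdisBC}, and composing the three bijections yields every assertion of the theorem.

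The hard part will be the bookkeeping in the first two steps: pinning down the signs and the placement of the factor $\ii$ in the Green identity so that the tuple matches the precise m-dissipative normalization of \cite{EK22}---so that it is genuine contractions $K$, rather than their adjoints or contractions associated with $-K$, that parametrize the extensions and reproduce the sign convention $\im(\wh\A\Psi\,|\,\Psi)_{\LL^2_{\ab,\beta}}\le 0$---together with the verification of the surjectivity and kernel descriptions of $\ga_0\uph H^1(\D)$ and $\gan\uph\HH(\Div,\D)$ on a general Lipschitz domain. Once the tuple is correctly set up, the remaining steps are a direct application of the abstract extension theory and of the standard Cayley correspondence between m-dissipative relations and contractions.
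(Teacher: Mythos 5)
Your proposal is correct and follows essentially the same route as the paper: the paper's proof consists precisely of establishing the acoustic m-boundary tuple via the integration-by-parts identity \eqref{e:IbyP} together with the surjectivity of $\ga_0$ and $\gan$ (Proposition \ref{p:acuBT}), and then invoking the abstract parametrization of m-dissipative restrictions by contractions from \cite{EK22} (Proposition \ref{p:absM-dis}), whose content is the Cayley-transform correspondence you spell out. Your sign and $\ii$-placement bookkeeping checks out against the paper's normalization $\Ga_1=(-\ii)\wh\gan$, so the boundary condition you obtain is exactly \eqref{e:mdisBC}.
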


This theorem is proved in Section \ref{s:IntParts} using an m-boundary tuple for $\A^*$.

In order to provide a particular explicit pair of linear homeomorphisms $V^{-1}$ and $V^\cross$ in  
\eqref{e:mdisBC},
we consider on the boundary $\paD$ the associated (nonnegative) Laplace-Beltrami operator 
$
 \De_\paD \in \Lc(H^1 (\pa \D), H^{-1} (\paD) ).
$
We use also another version \[
 \De^\paD: \dom (\De_\paD) \subset L^2 (\pa \D) \to L^2  (\pa \D) \] of the  Laplace-Beltrami  operator, which is a nonnegative selfadjoint operator in $L^2 (\pa \D)$. The operator $\De^\paD$  can be rigorously defined via the Friedrichs representation theorem \cite{Kato}  applied to the nonnegative quadratic form that is equal to the square of the $L^2$-norm of the surface gradient $ \gradm_\paD u $, see \cite{GMMM11}. 
The operator $\De^\paD$ has purely discrete spectrum. The corresponding eigenfunction expansion
 implies that $\De^\paD$
can be considered as a restriction of $ \De_\paD$ to $\dom \De^\paD = \{ u \in H^1 (\pa \D) : \De_\paD u \in L^2 (\paD)\}$.

\begin{lem} \label{l:VV*}
Assume that $V: L^2 (\paD) \to H^{1/2} (\paD)$ is defined by
$V: f \mapsto (I+\De^\paD)^{-1/4} f$.
Then $V \in \Hom \left( L^2 (\paD), H^{1/2} (\paD) \right)$. Its $\cross$-adjoint operator $V^\cross \in \Hom \left(H^{-1/2} (\pa \D), L^2 (\pa \D) \right)$  is a unique extension of 
$ V $ by continuity to  a bounded operator from $H^{-1/2} (\pa \D)$ to $L^2 (\pa \D) $ (in particular, $V^\cross f = V f$ for $f \in L^2 (\paD)$).
\end{lem}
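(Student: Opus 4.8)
The plan is to verify the three claims about $V:f\mapsto(I+\De^\paD)^{-1/4}f$ using the spectral decomposition of the selfadjoint operator $\De^\paD$ together with the standard fact that the scale of Hilbertian Sobolev spaces $H^s(\paD)$ for $s\in[-1,1]$ is comparable, via equivalent norms, to the scale of domains of powers of $(I+\De^\paD)^{1/2}$. Concretely, let $\{y_j\}_{j\in\NN}$ be the orthonormal basis of $L^2(\paD)$ of eigenfunctions of $\De^\paD$ with eigenvalues $\mu_j\ge 0$. Then $(I+\De^\paD)^{s/2}$ is the operator $y_j\mapsto (1+\mu_j)^{s/2}y_j$, and for $s\in[0,1]$ the space $H^s(\paD)$ coincides (with equivalent norm) with $\dom\big((I+\De^\paD)^{s/2}\big)$, while for $s\in[-1,0)$ it is the completion of $L^2(\paD)$ in the norm $\|(I+\De^\paD)^{s/2}\cdot\|_{L^2}$; this is exactly the setting of \cite{GMMM11}. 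I would state this comparability as the one external input and then everything becomes a bounded-below/bounded-above computation on Fourier coefficients.

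First I would show $V\in\Hom(L^2(\paD),H^{1/2}(\paD))$: the map $f\mapsto(I+\De^\paD)^{-1/4}f$ sends $L^2(\paD)$ bijectively onto $\dom\big((I+\De^\paD)^{1/4}\big)=H^{1/2}(\paD)$ with bounded inverse $(I+\De^\paD)^{1/4}$, and both $V$ and $V^{-1}$ are bounded by the spectral calculus; hence $V$ is a linear homeomorphism. Next, for the $\cross$-adjoint: by definition $V^\cross\in\Lc\big(H^{-1/2}(\paD),L^2(\paD)\big)$ is characterized by $\<V^\cross g,h\>_{L^2(\paD)}=\<g,Vh\>_{L^2(\paD)}$ for all $g\in H^{-1/2}(\paD)$, $h\in L^2(\paD)$, where $\<\cdot,\cdot\>_{L^2(\paD)}$ is the sesquilinear duality pairing extending the $L^2$ inner product. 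For $g\in L^2(\paD)$ this pairing is literally the $L^2$ inner product, so for $g,h\in L^2(\paD)$ we get $(V^\cross g\mid h)_{L^2}=(g\mid Vh)_{L^2}=(V^*g\mid h)_{L^2}=(Vg\mid h)_{L^2}$ since $V=(I+\De^\paD)^{-1/4}$ is selfadjoint on $L^2$; thus $V^\cross g=Vg$ for every $g\in L^2(\paD)$. Since $L^2(\paD)$ is dense in $H^{-1/2}(\paD)$ and $V^\cross$ is bounded from $H^{-1/2}(\paD)$ to $L^2(\paD)$, $V^\cross$ is the unique continuous extension of $V|_{L^2(\paD)}$ to $H^{-1/2}(\paD)$. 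Finally $V^\cross\in\Hom(H^{-1/2}(\paD),L^2(\paD))$ follows from the general fact quoted before the lemma that $V\in\Hom(L^2,H^{1/2})$ implies $V^\cross\in\Hom(H^{-1/2},L^2)$ with $(V^{-1})^\cross=(V^\cross)^{-1}$ \cite{EK22}; alternatively one checks directly that the continuous extension of $(I+\De^\paD)^{-1/4}$ to $H^{-1/2}(\paD)$ is a bijection onto $L^2(\paD)$ with bounded inverse the continuous extension of $(I+\De^\paD)^{1/4}$ (again by the spectral calculus on the completed scale).

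The only genuinely delicate point is the identification of the abstract completion $\dom\big((I+\De^\paD)^{-1/4}\big)^{\text{compl}}$ with the distributional Sobolev space $H^{-1/2}(\paD)$ and the verification that the duality pairing $\<\cdot,\cdot\>_{L^2(\paD)}$ used to define $\cross$-adjoints agrees with the $(I+\De^\paD)$-spectral pairing $\sum_j a_j\overline{b_j}$; on a merely Lipschitz boundary one cannot appeal to smooth symbol calculus, so I would cite \cite{GMMM11} for the fact that $H^s(\paD)$ for $|s|\le 1$ is indeed this interpolation/spectral scale and that the pairing is the expected one. Once that identification is granted, no estimate is harder than comparing $(1+\mu_j)^{\pm 1/4}$ against a fixed power on the sequence level, so the proof is essentially a bookkeeping argument; I would keep it short and push the functional-analytic content onto Lemma/Theorem references already in place.
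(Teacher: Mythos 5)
Your proposal is correct and follows exactly the route the paper intends: the paper's own proof is the single line ``follows from the eigenfunction expansion \cite{GMMM11} for $\De^\paD$,'' and your argument is precisely that eigenfunction-expansion/spectral-scale argument written out in full, including the identification of $H^{\pm 1/2}(\pa\D)$ with the domain scale of $(I+\De^\paD)^{\pm 1/4}$ from \cite{GMMM11} and the selfadjointness of $V$ on $L^2(\pa\D)$ to identify $V^\cross$ with $V$ on $L^2(\pa\D)$. No discrepancy with the paper's approach; you have simply supplied the bookkeeping the paper leaves implicit.
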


\begin{proof}
The lemma follows from the eigenfunction expansion  \cite{GMMM11} for $\De^\paD$. 
\end{proof}

We denote the set of eigenvalues of an operator $T$ by $\si_\pr (T)$.

Assume  that $1 \not \in \si_\pr (K)$. Then the operator $K-I$ in the boundary condition \eqref{e:mdisBC} has a (possibly unbounded) inverse $(K-I)^{-1}$, and \eqref{e:mdisBC} can be equivalently written in the form 
 \begin{gather} \label{e:Imp}
   \Zc \ga_0 (p) = \gan (\vbf)  
\end{gather}
with $\Zc= (V^\cross)^{-1} (I-K)^{-1} (K+I) V^{-1}$. 
By the analogy with Maxwell systems \cite{ACL18,EK22}, we call the boundary conditions of the form $\Zc \ga_0 (p) = \gan (\vbf)$ \emph{generalized impedance boundary conditions} and call an operator $\Zc:\dom \Zc \subseteq H^{1/2} (\pa \D) \to H^{-1/2} (\pa \D)$ in \eqref{e:Imp} \emph{an impedance operator} (for standard impedance boundary conditions, see  \cite{M03,CK13,ACL18} and references therein).

Following an abstract definition in \cite{EK22}, we call an impedance  operator $\Zc$ \emph{accretive} if $\re \<\Zc f | f \>_{L^2 (\pa \D)} \ge 0$ for all $f \in \dom \Zc$, and \emph{maximal accretive} if it has no proper accretive extensions $\wh \Zc:\dom \wh \Zc \subseteq H^{1/2} (\pa \D) \to H^{-1/2} (\pa \D)$ (recall that an extension $ \wh \Zc$ is \emph{proper} if $\dom \wh \Zc \supsetneqq \dom \Zc$).
In the case where $\Zc$ is maximal accretive and closed, its $\cross$-adjoint operator $Z^\cross:\dom \Zc^\cross \subseteq H^{1/2} (\pa \D) \to H^{-1/2} (\pa \D)$ is also maximal accretive and closed (see \cite{EK22} and also Section \ref{s:Ext} for details).

\begin{thm} \label{t:Imp} 
Let $\wh \A$ be an acoustic operator associated with \eqref{e:Imp} for a certain impedance operator $\Zc:\dom \Zc \subseteq H^{1/2} (\pa \D) \to H^{-1/2} (\pa \D)$. Then $\wh \A$ is m-dissipative if and only if  $\Zc$  is closed and maximal accretive.
Moreover, $\wh \A = \wh \A^*$ if and only if $\Zc^\cross = - \Zc^\cross$. 
\end{thm}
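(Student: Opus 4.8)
The plan is to rewrite the generalized impedance boundary condition \eqref{e:Imp} as a boundary condition of the form \eqref{e:mdisBC} and then invoke Theorem~\ref{t:AM-dis}. Keep the fixed pair $V\in\Hom\!\left(L^2(\pa\D),H^{1/2}(\pa\D)\right)$, $V^\cross\in\Hom\!\left(H^{-1/2}(\pa\D),L^2(\pa\D)\right)$, and for a contraction $K$ in $L^2(\pa\D)$ consider the linear relation
\[
\Lambda_K:=\{(g,h)\in H^{1/2}(\pa\D)\times H^{-1/2}(\pa\D):\ (K+I)V^{-1}g+(K-I)V^\cross h=0\}.
\]
The first step is a Cayley-transform dictionary on the Hilbert space $L^2(\pa\D)$: for a contraction $K$ the operator $C:=(I-K)^{-1}(K+I)$ with $\dom C=\ran(I-K)$ is closed and m-accretive (equivalently, closed and maximal accretive, by the accretive form of Phillips' criterion \eqref{e:CrM-dis1}), and $1\notin\si_\pr(K)$ iff $\ker(I-K)=\{0\}$; a short computation with the operators $K\pm I$ then shows that $\Lambda_K$ is the graph of a (single-valued, in general unbounded) operator exactly when $1\notin\si_\pr(K)$, and in that case it equals $\Gr\Zc_K$ with $\Zc_K:=(V^\cross)^{-1}CV^{-1}$, $\dom\Zc_K=V\,\ran(I-K)$. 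Conversely, if $\Zc$ is closed and maximal accretive, then, using $(V^{-1})^\cross=(V^\cross)^{-1}$, $(T^\cross)^\cross=T$ and the $L^2(\pa\D)$-pairing rules of \cite{EK22} — which give $\<\Zc f,f\>_{L^2(\pa\D)}=(\,CV^{-1}f\,|\,V^{-1}f\,)_{L^2(\pa\D)}$ for $C:=V^\cross\Zc V$ — the operator $C$ is closed and m-accretive in $L^2(\pa\D)$ (closedness and accretivity are transported by the homeomorphisms $V,V^\cross$, and the bijection $\wh C\mapsto(V^\cross)^{-1}\wh C V^{-1}$ of extensions matches domains, so maximality transfers as well), and $K:=(C-I)(C+I)^{-1}$ is a contraction with $1\notin\si_\pr(K)$ and $\Zc=\Zc_K$. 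Thus $K\mapsto\Zc_K$ is a bijection between contractions $K$ with $1\notin\si_\pr(K)$ and closed maximal accretive impedance operators.

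Second, for $\Zc=\Zc_K$ the two boundary conditions \eqref{e:Imp} and \eqref{e:mdisBC} select the same subset of $\dom\A^*$. Indeed, for $\Psi=\{\vbf,p\}\in\dom\A^*$ set $a:=V^{-1}\ga_0(p)$, $b:=V^\cross\gan(\vbf)$; the identity $(K+I)a+(K-I)b=0$ is equivalent to $a\in\ran(I-K)=\dom C$ together with $Ca=b$, i.e.\ to $\ga_0(p)\in\dom\Zc_K$ and $\Zc_K\ga_0(p)=\gan(\vbf)$. (This is the relation $\Gr\Zc_K=\Lambda_K$ from the first step, read through the traces.)

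Now the equivalence follows. If $\Zc$ is closed and maximal accretive, write $\Zc=\Zc_K$; by the second step $\wh\A$ coincides with the acoustic operator associated with \eqref{e:mdisBC} for this contraction $K$, hence is m-dissipative by Theorem~\ref{t:AM-dis}. Conversely, if $\wh\A$ associated with \eqref{e:Imp} is m-dissipative, then $\dom\A\subseteq\dom\wh\A$ automatically (any $\Psi\in\dom\A$ has $\ga_0(p)=0\in\dom\Zc$ and $\Zc 0=0=\gan(\vbf)$), so $\wh\A$ is an m-dissipative extension of $\A$ and Theorem~\ref{t:AM-dis} provides a contraction $K$ with $\dom\wh\A$ also given by \eqref{e:mdisBC}. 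Since $\ga_0$ is surjective on $H^1(\D)$ and $\gan$ on $\HH(\Div,\D)$, the combined trace $\Psi\mapsto(\ga_0(p),\gan(\vbf))$ maps $\dom\A^*$ onto $H^{1/2}(\pa\D)\times H^{-1/2}(\pa\D)$ with kernel $\dom\A$, and applying it to the two descriptions of $\dom\wh\A$ (both containing $\dom\A$) yields $\Gr\Zc=\Lambda_K$; hence $\Lambda_K$ is a graph, so $1\notin\si_\pr(K)$ and $\Zc=\Zc_K$ is closed and maximal accretive. Finally, for the last assertion (which should read $\Zc^\cross=-\Zc$): by Theorem~\ref{t:AM-dis}, $\wh\A=\wh\A^*$ iff $K$ is unitary; the identity $(C-I)(C+I)^{-1}=K$ exhibits $K$ as the Cayley transform of the symmetric operator $\ii C$, so $K$ is unitary iff $\ii C$ is self-adjoint iff $C^*=-C$; and since $\Zc^\cross=(V^\cross)^{-1}C^*V^{-1}$ by the $\cross$-adjoint calculus of \cite{EK22}, $C^*=-C$ holds iff $\Zc^\cross=-\Zc$.

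The part I expect to demand the most care is the bookkeeping around the (generally unbounded, not densely defined in $H^{1/2}(\pa\D)$) impedance operator $\Zc$ and its Cayley preimage $C$ — confirming closedness of $C$, that it is m-accretive and not merely accretive, that the extension correspondence $\wh C\mapsto(V^\cross)^{-1}\wh C V^{-1}$ preserves maximality, and that accretivity in the $L^2(\pa\D)$-pairing matches $L^2$-accretivity. The single genuinely non-formal point is that single-valuedness of an impedance operator is exactly what forces $1\notin\si_\pr(K)$; everything else is the classical Cayley correspondence transported along the homeomorphisms $V$ and $V^\cross$.
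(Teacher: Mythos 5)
Your argument is correct, and it takes a genuinely more self-contained route than the paper. The paper's own proof of Theorem~\ref{t:Imp} is a one-line reduction: it applies the abstract result \cite[Corollary 7.2]{EK22} to the m-boundary tuple $\Tc$ of Proposition~\ref{p:acuBT}, i.e.\ it outsources the entire correspondence between closed maximal accretive impedance parameters and m-dissipative restrictions to a cited corollary. You instead reconstruct that correspondence in-line and make Theorem~\ref{t:Imp} a consequence of Theorem~\ref{t:AM-dis}: the key computations --- that the solution set $\{(a,b):(K+I)a=(I-K)b\}$ coincides with $\{((I-K)e,(I+K)e):e\in L^2(\pa\D)\}$ (take $e=(a+b)/2$), hence is a graph iff $1\notin\si_\pr(K)$; that accretivity transfers through $V,V^\cross$ because $\<\Zc Vg,Vg\>_{L^2(\pa\D)}=(V^\cross\Zc Vg\,|\,g)_{L^2(\pa\D)}$; that maximality transfers because the homeomorphisms induce a bijection of accretive extensions; and that the combined trace maps $\dom\A^*$ onto $H^{1/2}(\pa\D)\times H^{-1/2}(\pa\D)$ with kernel $\dom\A$, so equality of the two descriptions of $\dom\wh\A$ forces $\Gr\Zc=\Lambda_K$ --- are all present and sound, and together with Phillips' criterion \eqref{e:CrM-dis1} they do establish the equivalence. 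You also correctly diagnosed the typo in the statement: the selfadjointness criterion must read $\Zc^\cross=-\Zc$, and your chain ($K$ unitary iff $\ii C$ selfadjoint iff $C^*=-C$ iff $\Zc^\cross=-\Zc$) is the right derivation. The trade-off is clear: the paper's proof is shorter but opaque without access to \cite{EK22}, while yours exposes the Cayley-transform mechanism and the unbounded-operator bookkeeping explicitly; the underlying mathematics is ultimately the same, since the cited abstract corollary rests on the same contraction parametrization.
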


This theorem is proved in Section \ref{s:IntParts}.
Note that there are m-dissipative extensions of $\A$ (for example, $\A^\Dr$ in Section \ref{s:NtDKrRes}) that cannot be associated with a generalized impedance boundary condition \eqref{e:Imp}.

\subsection{Neumann-to-Dirichlet maps and Krein-type resolvent formulae}
\label{s:NtDKrRes}

For a rigorous randomization of the boundary condition \eqref{e:mdisBC}, which will be done  in Section \ref{s:RBC}, we need two analytic tools,  Neumann-to-Dirichlet maps (depending on the spectral parameter $\la$) and Krein-type resolvent formulae for m-dissipative acoustic systems. 

The Neumann-to-Dirichlet maps (in short, NtD-maps) and their dual maps, Dirichlet-to-Neumann maps (in short, DtN-maps), were intensively studied for various PDEs and have numerous applications, e.g., to  spectral theory and inverse problems.
However, the author was not able to find in the mathematical literature a reference for DtN- or NtD-maps associated with 1st order acoustic systems that would treat general uniformly positive $L^\infty$-coefficients  as in our case (see the references and a discussion in Section \ref{s:DtNdis}).  
That is why we adapt to the m-boundary tuple settings the results of Derkach \& Malamud  on abstract analogues of DtN maps, which are called Weyl M-functions and were introduced for the case of boundary triples in \cite{DM91,DM95} (see also Section \ref{s:DtNKrF} below).

In order to define the NtD-map, let us denote by $\A^\Nr$ (by $\A^\Dr$) the selfadjoint acoustic operator corresponding to the Neumann boundary condition $\gan (\vbf) = 0$ (resp., to the Dirichlet boundary condition $\ga_0 (p) = 0$). 

\begin{thm}[Neumann-to-Dirichlet map] \label{t:NtD}
The exists a unique $\Lc (H^{-1/2} (\paD),H^{1/2} (\paD))$-valued holomorphic function $\la \mapsto M_{\NtD} (\la)$, which we call \emph{the NtD map}, that is defined for all complex numbers
 $\la \in \rho (\A^\Nr) $ in  such a way that 
  the equality 
\[
M_{\NtD} (\la) \gan (\vbf) =  -\ii \ga_0 (p) 
\]
holds for every solution $\{\vbf,p\}$
to the time-harmonic acoustic system  
\begin{gather*} 
  \ab^{-1} \gradm  p = \ii \la \vbf , \quad \beta^{-1} \Div \vbf = \ii \la p , \quad \text{ where $\vbf \in \HH (\Div,\D)$ and $p \in H^1 (\D) $.}
\end{gather*}
Moreover, the function $M_\NtD$ has the following properties: 
\item[(i)] $M_\NtD (\overline{\la}) = (M_\NtD (\la))^\cross$  and $(\im \la) \, \im \<  M_\NtD (\la) f,f\>_{L^2 (\pa D)} \ge 0$ for all $f \in H^{-1/2} (\paD)$ and $\la \in \rho ( \A^\Nr)$.
\item[(ii)] $M_{\NtD} (\la) \in \Hom (H^{-1/2} (\paD),H^{1/2} (\paD))$ for all $\la \in \rho (\A^\Dr) \cap \rho (\A^\Nr)$.
\end{thm}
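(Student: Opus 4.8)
The plan is to construct $M_\NtD$ via the abstract theory of Weyl functions for m-boundary tuples (boundary value spaces), adapting the Derkach--Malamud framework to the setting at hand. First I would fix an m-boundary tuple $(\Gc, \Gamma_0, \Gamma_1)$ for $\A^*$ — naturally with boundary space involving $L^2(\pa\D)$, with $\Gamma_0$ built from the normal trace $\gan(\vbf)$ (suitably rescaled by the homeomorphism $V$ of Lemma \ref{l:VV*}) and $\Gamma_1$ from the scalar trace $\ga_0(p)$ — in such a way that $\A^\Nr = \ker \Gamma_0$ and $\A^\Dr = \ker \Gamma_1$. The Neumann solution operator: for $\la \in \rho(\A^\Nr)$, given Neumann data $g = \gan(\vbf) \in H^{-1/2}(\pa\D)$, there is a unique $\la$-eigenfunction $\Psi_\la = \{\vbf,p\}$ of $\af_{\ab,\beta}$ (i.e. a solution of the time-harmonic system) with that normal trace; this follows from the direct decomposition $\dom \A^* = \dom \A^\Nr \dotplus \ker(\A^* - \la)$ valid precisely when $\la \in \rho(\A^\Nr)$. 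One then \emph{defines} $M_\NtD(\la) g := -\ii\, \ga_0(p)$ where $p$ is the scalar component of $\Psi_\la$. Boundedness $M_\NtD(\la) \in \Lc(H^{-1/2}(\pa\D), H^{1/2}(\pa\D))$ comes from the closed graph theorem applied to the composition (solution operator) $\circ$ (scalar trace), both of which are closed; holomorphy in $\la$ follows because $\Psi_\la$ depends holomorphically on $\la$ through the resolvent $(\A^\Nr - \la)^{-1}$ — explicitly one writes $\Psi_\la = (I + (\la - \la_0)(\A^\Nr - \la)^{-1}) \Psi_{\la_0}$ for a fixed reference point, so $\la \mapsto \ga_0(p_\la)$ is a holomorphic $H^{1/2}(\pa\D)$-valued family. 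Uniqueness is immediate since the defining identity pins down the operator on all of $H^{-1/2}(\pa\D) = \ran \gan$.

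For property (i): the symmetry relation $M_\NtD(\overline\la) = (M_\NtD(\la))^\cross$ and the sign condition $(\im\la)\,\im\<M_\NtD(\la)f,f\>_{L^2(\pa\D)} \ge 0$ are the standard Green-identity computations for Weyl functions, transported to the $\cross$-adjoint (the $L^2(\pa\D)$-pairing of $H^{1/2}$ with $H^{-1/2}$). Concretely, take two solutions $\Psi_\la = \{\vbf,p\}$ at $\la$ and $\Psi_\mu = \{\ubf,q\}$ at $\mu$; the abstract Green identity (Lagrange identity) for the m-boundary tuple gives
\[
(\la - \overline\mu)(\Psi_\la \mid \Psi_\mu)_{\LL^2_{\ab,\beta}} = \<\Gamma_1\Psi_\la, \Gamma_0\Psi_\mu\> - \<\Gamma_0\Psi_\la, \Gamma_1\Psi_\mu\>,
\]
which after inserting the definitions of $\Gamma_0, \Gamma_1$ and $M_\NtD$ and choosing $\mu = \overline\la$ yields the symmetry, and choosing $\mu = \la$ yields the sign condition (the left side becomes $(\la-\overline\la)\|\Psi_\la\|^2 = 2\ii(\im\la)\|\Psi_\la\|^2$). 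One must keep careful track of the factor $-\ii$ and of the fact that $\af_{\ab,\beta}$ is $\frac1\ii$ times a symmetric-looking block operator, which is exactly where the $(\im\la)$ rather than $(\re\la)$ weighting enters. For property (ii): on $\rho(\A^\Dr) \cap \rho(\A^\Nr)$ the solution operator with \emph{Dirichlet} data is also a homeomorphism onto the eigenspace, so $M_\NtD(\la)$ has a bounded inverse — its inverse is (up to sign and the $V$-rescaling) the Dirichlet-to-Neumann map $M_\DtN(\la)$, which exists and is bounded $H^{1/2}(\pa\D) \to H^{-1/2}(\pa\D)$ precisely on $\rho(\A^\Dr)$; combined with $M_\NtD(\la)$ itself being bounded, this gives $M_\NtD(\la) \in \Hom(H^{-1/2}(\pa\D), H^{1/2}(\pa\D))$.

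The main obstacle I anticipate is \emph{not} any of the abstract Weyl-function bookkeeping but rather verifying that the traces $\gan$ and $\ga_0$ genuinely assemble into an m-boundary tuple with the right regularity — i.e. that the maps land in and surject onto the correct Sobolev scale $H^{\pm 1/2}(\pa\D)$, that the Green identity holds with these non-smooth $L^\infty$ coefficients $\ab, \beta$, and that the two reference operators $\A^\Nr, \A^\Dr$ really are selfadjoint. This is a genuine PDE input (solvability and regularity of the first-order acoustic system on a Lipschitz domain with merely bounded measurable coefficients), and it is presumably where the paper invests its technical work — but since the excerpt explicitly defers these matters to later sections and says the m-boundary tuple for $\A^*$ is established there, in this plan I would simply invoke that construction and then the NtD map falls out as its Weyl function by the argument above. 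A secondary subtlety is holomorphy \emph{as an $\Lc(H^{-1/2},H^{1/2})$-valued} function (not merely weakly, or as an $\Lc(L^2)$-valued function): this needs the resolvent identity argument phrased at the level of the trace spaces, using that $\ga_0$ restricted to the $\la$-eigenspaces maps boundedly into $H^{1/2}(\pa\D)$ uniformly on compact subsets of $\rho(\A^\Nr)$.
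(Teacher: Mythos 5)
Your proposal is correct and follows essentially the same route as the paper: $M_{\NtD}$ is obtained as the $\Ga_0$--to--$\Ga_1$ map (abstract Weyl function) associated with the m-boundary tuple $(H^{-1/2} (\pa \D), L^2 (\pa \D) , H^{1/2} (\pa \D),  \wh \gan, (- \ii)\wh \ga_0  )$ of Proposition \ref{p:acuBT}, with property (i) coming from the Green identity and property (ii) from the dual tuple (i.e.\ from the DtN map being its inverse on $\rho (\A^\Dr) \cap \rho (\A^\Nr)$). The only presentational difference is that the paper passes to a genuine boundary triple via the homeomorphism $W$ (Lemma \ref{l:BT}) and cites the Derkach--Malamud Weyl-function results, whereas you re-derive the standard ingredients (the direct-sum decomposition $\dom \A^* = \dom \A^\Nr \dotplus \ker(\A^*-\la)$, the $\gamma$-field formula for holomorphy, and the closed-graph/open-mapping argument for boundedness) directly at the level of the m-boundary tuple.
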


This theorem is proved in Section \ref{s:DtN}.

\begin{rem}[Dirichlet-to-Neumann map]\label{r:DtN}
For $\la \in \rho (\A^\Dr) \cap \rho (\A^\Nr)$,
Theorem \ref{t:NtD} implies the existence of $(M_{\NtD} (\la))^{-1} \in \Hom (H^{1/2} (\paD),H^{-1/2} (\paD))$. \emph{The DtN map} can be  defined by 
$
M_{\DtN} (\la):= - (M_{\NtD} (\la))^{-1} $ first for all $\la \in \rho (\A^\Dr) \cap \rho (\A^\Nr)$,
and then  can be extended to all $\la  \in \rho (\A^\Dr)$ as a holomorphic $\Lc (H^{1/2} (\paD),H^{-1/2} (\paD))$-valued function. It has the properties analogous to those of $M_{\NtD} $ (see Section \ref{s:DtN} for details).
\end{rem}

We use NtD-maps via the following generalization of the Krein formula for resolvents.

\begin{thm}[acoustic Krein-type resolvent formula] \label{t:aKrF}
Let $K$ be a contraction in $L^2 (\pa \D)$, and let $\wh A$ be the acoustic 
operator associated with the boundary condition \eqref{e:mdisBC}.
Then:
\begin{itemize}
\item[(i)] A complex number $ \la \in \rho (\A^\Nr)$ is in the spectrum $ \si (\wh \A)$ of $\wh A$ if and only if 
\begin{gather*} \label{e:E0E1}
\text{$E_1 V^{-1} M_{\NtD} (\la) + E_0 V^\cross \ \not \in \ \Hom (H^{-1/2} (\pa \D), L^2 (\pa \D)) $},
\end{gather*}
 where 
$ E_0 = I_{L^2 (\pa \D)} - K$ and $ E_1 = -\ii (I_{L^2 (\pa \D)}+K) $.
\item[(ii)]  For $\Psi = \{\vbf,p\} \in \HH (\Div,\D) \times  H^1 (\D) $, we put $\wh \ga_0  \Psi := \ga_0 (p) $, \quad $\wh \ga_\n \Psi := \gan (\vbf)$, and define a holomorphic function $\ga_\Dr  :\rho (\A^\Nr)
\to \Lc (\LL^2_{\ab,\beta} (\D),  H^{1/2} (\pa \D))$ by $\ga_\Dr (\la) := \wh \ga_0  (\A^\Nr - \lambda  )^{-1}$.
Then the following formula is valid  for all $\la \in \rho (\wh \A)  \cap \rho (\A^\Nr) $ and $\Psi, \Phi \in \LL^2_{\ab,\beta} (\D)$:
\begin{multline}  \label{e:KFDtN}
( [\A^\Nr - \la ]^{-1} \Psi | \Phi )_{\LL^2_{\ab,\beta}}   - ( [\wh \A - \la ]^{-1} \Psi | \Phi  )_{\LL^2_{\ab,\beta}}  
\\ = \left\< [E_1 V^{-1} M_{\NtD} (\la) + E_0 V^\cross]^{-1} E_1 V^{-1} \ga_\Dr (\la)  \Psi ,  \ga_\Dr (\overline{\la}) \Phi \right\>_{L^2 (\pa \D)},
 \end{multline}
where $
\<\cdot,\cdot\>_{L^2 (\pa \D)}$ is understood as the ${L^2 (\pa \D)}$-paring of $H^{-1/2} (\pa \D)$ and $H^{1/2} (\pa \D)$.
\end{itemize}
\end{thm}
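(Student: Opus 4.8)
The plan is to derive this from the general theory of m-boundary tuples (boundary value spaces) applied to $\A^*$, for which $\{\wh\ga_0, \wh\ga_\n\}$ or a suitable rescaling serves as the boundary maps. The guiding principle is that, with respect to the ordinary boundary triple one obtains after conjugating by $V$, the operator $\A^\Nr$ is the ``$\ga_1$-kernel'' extension, $\A^\Dr$ is the ``$\ga_0$-kernel'' extension, and $\wh\A$ is the extension cut out by the linear relation encoded in $E_0, E_1$; then (i) is the standard statement that $\la \in \si(\wh\A) \setminus \si(\A^\Nr)$ iff the relevant combination of the Weyl function and the coefficients fails to be boundedly invertible, and (ii) is the Derkach--Malamud Krein-type resolvent formula transcribed into the present notation. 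First I would set up the boundary tuple: from Lemma \ref{l:VV*} and the construction in Section \ref{s:MBT}, the pair $\Gamma_0 := V^\cross\wh\ga_\n$, $\Gamma_1 := V^{-1}\wh\ga_0$ (acting from $\dom\A^*$ to $L^2(\pa\D)$) is an m-boundary tuple whose Weyl function is, up to the $-\ii$ normalization already present in Theorem \ref{t:NtD}, $\la \mapsto V^{-1}M_{\NtD}(\la)(V^\cross)^{-1}$; here $\ker\Gamma_0 = \dom\A^\Nr$ because $\Gamma_0\Psi = 0 \Leftrightarrow \gan(\vbf) = 0$, and the Green identity for $\af_{\ab,\beta}$ gives the abstract Green/Lagrange formula needed for the tuple.

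Next I would rewrite the boundary condition \eqref{e:mdisBC}. In terms of $\Gamma_0,\Gamma_1$ it reads $(K+I)\Gamma_1\Psi + (K-I)\Gamma_0\Psi = 0$, i.e. $E_1\Gamma_1\Psi + \ii E_0\Gamma_0\Psi = 0$ after multiplying by $-\ii$ — matching the definitions $E_0 = I-K$, $E_1 = -\ii(I+K)$ — so $\wh\A$ is exactly the extension $\A_\Theta$ associated with the linear relation $\Theta$ whose ``matrix'' is $\{\ii E_0, E_1\}$ (block row). For part (i): by the abstract theory, for $\la \in \rho(\A^\Nr)$ one has $\la \in \rho(\wh\A)$ iff the operator $E_1 M(\la) - (-\ii E_0)^{?} \cdots$ — more precisely the combination $E_1\,[V^{-1}M_{\NtD}(\la)] + E_0 V^\cross$ restricted appropriately (absorbing $V^\cross$ on the right) — is a homeomorphism of $H^{-1/2}(\pa\D)$ onto $L^2(\pa\D)$; I would verify the bookkeeping of the $V$'s and the $-\ii$ carefully, using Theorem \ref{t:NtD}(i)--(ii) to know that $M_{\NtD}(\la)$ maps $H^{-1/2}$ into $H^{1/2}$ (and is a homeomorphism when additionally $\la \in \rho(\A^\Dr)$), so that the displayed operator indeed acts $H^{-1/2}(\pa\D) \to L^2(\pa\D)$ and the homeomorphism condition is meaningful. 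For part (ii): starting from the abstract Krein formula
\[
(\A^\Nr - \la)^{-1} - (\wh\A - \la)^{-1} = \ga(\la)\,\bigl(\text{correction involving } \Theta \text{ and } M(\la)\bigr)^{-1}\,\ga(\overline\la)^*,
\]
where $\ga(\la) = \wh\ga_0(\A^\Nr-\la)^{-1}$ composed with $V^{-1}$ is the $\gamma$-field of the tuple, I would expand the middle factor, using $\Gamma_1(\A^\Nr-\la)^{-1}\Psi = \ga_\Dr(\la)\Psi$ (modulo $V^{-1}$) and $\Gamma_0(\A^\Nr-\la)^{-1} = 0$, to land on $[E_1 V^{-1}M_{\NtD}(\la) + E_0 V^\cross]^{-1} E_1 V^{-1}\ga_\Dr(\la)$ as the operator sandwiched in the $L^2(\pa\D)$-pairing with $\ga_\Dr(\overline\la)\Phi$, which is exactly \eqref{e:KFDtN}. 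The adjoint/$\cross$-duality relation $M_\NtD(\overline\la) = M_\NtD(\la)^\cross$ and the analogous relation for the $\gamma$-field are what make the right-hand side a genuine $H^{-1/2}$--$H^{1/2}$ pairing.

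The main obstacle I expect is the precise matching of the Hilbert-space riggings and adjoints: $M_{\NtD}(\la)$ naturally lives in $\Lc(H^{-1/2},H^{1/2})$, $V$ and $V^\cross$ shuffle between $L^2(\pa\D)$ and $H^{\pm1/2}(\pa\D)$, and the $\cross$-adjoint (w.r.t. the $L^2(\pa\D)$-pairing) rather than the Hilbert-space adjoint is the correct duality throughout; keeping track of which inverse, which $V$ versus $V^\cross$, and which side each factor acts on — so that every composition in \eqref{e:KFDtN} is well-defined and the bracketed operator is invertible precisely on $\rho(\wh\A)\cap\rho(\A^\Nr)$ — is the delicate part. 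A secondary technical point is justifying that $\{\Gamma_0,\Gamma_1\}$ is genuinely an m-boundary tuple in the sense of Section \ref{s:MBT} (surjectivity of the combined map, the Green identity with the correct sesquilinear form), but this should follow from surjectivity of $\ga_0$ and $\gan$ together with Lemma \ref{l:VV*}; once that is in place, parts (i) and (ii) are specializations of the abstract Derkach--Malamud results recalled in Section \ref{s:DtNKrF}.
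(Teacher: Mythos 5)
Your proposal is correct and follows essentially the same route as the paper: the paper likewise works with the dual boundary tuple $(H^{-1/2},L^2,H^{1/2},\wh\ga_\n,-\ii\wh\ga_0)$ conjugated by $W=(V^\cross)^{-1}$ (so that $\A^\Nr$ is the $\ker\Ga_0$ extension and the Weyl function is $V^{-1}M_{\NtD}(\la)(V^\cross)^{-1}$), rewrites \eqref{e:mdisBC} as an abstract condition with coefficients $E_0,E_1$, and then invokes the Derkach--Malamud invertibility criterion and Krein resolvent formula, exactly as you outline. The only difference is organizational: the paper packages the abstract step as a separate Proposition before specializing, and its $\Ga_1$ carries the $-\ii$ factor needed for the Green identity, a normalization you already flagged as requiring care.
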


This theorem is proved in Section \ref{s:KreinF}, where we combine the m-boundary tuple method  of \cite{EK22} with the abstract Krein-type formulae of
 \cite{DM91,DM95} (see also the monographs \cite{DM17,BHdS20}).

\subsection{Random m-dissipative boundary conditions for acoustic system}
\label{s:RBC}

In the sequel, $(\Om, \Fc,\PP)$ is the underlying complete probability space.  Under random variables we understand $(\Om,\Fc)$-measurable functions from $\Om$ to $\CC$. Random vectors are functions $h:\Om \to \Yf$ with values in a separable Banach space $\Yf$ such that $\{ \om \in \Om : h (\om) \in B\} \in \Fc$ for any Borel set $B$ in $\Yf$.

Let $\Xf$ be a separable Hilbert space. By $\dim \Xf $ we denote its dimensionality. In the case $\dim \Xf = \infty$, the simplest type of an operator randomness can be introduced for functions $T : \om \mapsto T_\om$, $T:\Om \to \Lc (\Xf)$, in the following way.
We say that an $\Lc (\Xf)$-valued function $T$ (defined on an event $\Om_1 \subseteq \Om$ of probability 1)  is a  \emph{random bounded operator in $\Xf$} if $(Tf|g)_{\Xf}$ is a random variable for all $f,g \in \Xf$ 
(this definition corresponds to the class $\mathbf{L} (\Om,\Xf)$ of \cite{S84}). 

Let $T$ be a random bounded operator in $\Xf$. Then (see, e.g., \cite{BR72,DZ92}) 
\begin{equation} \label{e:|T|}
\text{$\| T\|$ is random variable with the values in $\overline{\RR}_+ := [0,+\infty)$.} 
\end{equation}
A random bounded operator $T$ is said to be a \emph{random  contraction} if $\| T \| \le 1$ a.s., and is said to be  a \emph{random unitary operator} if $T$ is a.s. a unitary operator.

Random contractions will be used for the description of random boundary conditions, see Theorem \ref{t:randM-dis}. 
It is difficult to apply the definition given above  to randomized unbounded operators, in particular, to acoustic operators $\wh \Ac$ associated with randomized boundary conditions, because such operators   have domains  depending on $\om \in \Om$. 
However, under certain conditions the definition of random bounded operator can be applied to resolvents.

Namely, consider an operator-valued function $T: \om \mapsto T_\om$ such that $T_\om : \dom T_\om \subseteq \Xf \to \Xf$ is a selfadjoint operator for a.a. $\om \in \Om$. Then  $T$ is called a \emph{random-selfadjoint operator}  if $(T - z)^{-1}$ is a random bounded operator for every $z \in \CC \setminus \RR$.
This is an adaptation of the definition from the monograph \cite{PF92} to our situation. A natural modification for the  m-dissipative case is the following.

\begin{defn}[random-m-dissipative operator]\label{d:rmDis}
Assume that an operator-valued function $T: \om \mapsto T_\om$ is such that $T_\om : \dom T_\om \subseteq \Xf \to \Xf$ is a.s. an m-dissipative operator in $\Xf$. We say that $T$ is a \emph{random-m-dissipative operator} if its resolvent $(T - z)^{-1}$ is a random bounded operator for every $z \in \CC_+$.
\end{defn}

\begin{thm} \label{t:randM-dis} 
Let $K$ be a random contraction in $L^2 (\pa \D)$. Let us fix a certain deterministic $V \in \Hom (L^2 (\pa \D), H^{1/2} (\pa \D))$ (like in Theorem \ref{t:AM-dis} or in Lemma \ref{l:VV*}).
Then:
\begin{itemize}
\item[(i)] The acoustic operator $\wh \A $ 
associated 
 with the randomized boundary condition 
 \begin{gather} \label{e:mdisBCR}
(K+I_{L^2 (\pa \D)}) V^{-1} \ga_0 (p)  + (K-I_{L^2 (\pa \D)}) V^\cross \gan (\vbf) = 0 
\end{gather}
is a random-m-dissipative operator.
\item[(ii)] If, additionally, $K$ is a.s. unitary, then $\wh \A$ is a random-selfadjoint operator.
\end{itemize}
\end{thm}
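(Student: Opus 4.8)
\textbf{Proof plan for Theorem \ref{t:randM-dis}.}

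The plan is to reduce the measurability questions to the Krein-type resolvent formula of Theorem \ref{t:aKrF}(ii), which expresses the resolvent $(\wh\A-\la)^{-1}$ entirely in terms of the deterministic holomorphic objects $M_\NtD(\la)$, $\ga_\Dr(\la)$, $V^{-1}$, $V^\cross$ and the single random operator $K$ entering through $E_0=I-K$ and $E_1=-\ii(I+K)$. First I would verify that for each fixed $\om$ (in the a.s. event where $K_\om$ is a contraction) the boundary condition \eqref{e:mdisBCR} is exactly of the form \eqref{e:mdisBC}, so that Theorem \ref{t:AM-dis} guarantees $\wh\A_\om$ is an m-dissipative extension of $\A$; hence $\wh\A$ is a.s. m-dissipative and only the measurability of $\om\mapsto(\wh\A_\om-z)^{-1}$ for $z\in\CC_+$ remains to be shown. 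Since $z\in\CC_+\subseteq\rho(\wh\A_\om)$ automatically, formula \eqref{e:KFDtN} applies with $\la=z$ provided also $z\in\rho(\A^\Nr)$; because $\A^\Nr$ is a fixed selfadjoint operator, $\rho(\A^\Nr)\supseteq\CC_+$, so there is no exceptional set from that side.

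The main step is then to argue that the right-hand side of \eqref{e:KFDtN}, evaluated at $\la=z$ and tested against fixed $\Psi,\Phi\in\LL^2_{\ab,\beta}(\D)$, depends measurably on $\om$. Everything in that expression is a fixed bounded operator except the factor $[E_1V^{-1}M_\NtD(z)+E_0V^\cross]^{-1}E_1V^{-1}$, which is an affine-then-inverse function of $K_\om$. Writing $F(K):=E_1V^{-1}M_\NtD(z)+E_0V^\cross = -\ii(I+K)V^{-1}M_\NtD(z)+(I-K)V^\cross$, this is an $\Lc(H^{-1/2}(\paD),L^2(\paD))$-valued affine function of $K$; composition with the matrix entries $f\mapsto(Kf|g)_{L^2(\paD)}$ being measurable (the definition of random bounded operator), $\om\mapsto F(K_\om)$ is weakly measurable, hence strongly measurable by separability. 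By Theorem \ref{t:aKrF}(i), $z\in\rho(\wh\A_\om)$ forces $F(K_\om)\in\Hom(H^{-1/2}(\paD),L^2(\paD))$ a.s.; I would then invoke the standard fact that inversion is continuous (indeed analytic) on the open set of invertible operators in the operator norm, so $\om\mapsto F(K_\om)^{-1}$ is again strongly measurable. Multiplying by the fixed bounded operators $E_1V^{-1}$ on the left (note $E_1=-\ii(I+K_\om)$ is itself a random bounded operator, so this product is measurable as a product of two strongly measurable operator-valued maps), and by $\ga_\Dr(z)$ and $\ga_\Dr(\bar z)^*$, and finally pairing with $\Psi,\Phi$, shows that $\big((\A^\Nr-z)^{-1}\Psi\,|\,\Phi\big)_{\LL^2_{\ab,\beta}}-\big((\wh\A_\om-z)^{-1}\Psi\,|\,\Phi\big)_{\LL^2_{\ab,\beta}}$ is a random variable; since the first term is deterministic, $\om\mapsto\big((\wh\A_\om-z)^{-1}\Psi\,|\,\Phi\big)$ is a random variable for all $\Psi,\Phi$, i.e. $(\wh\A-z)^{-1}$ is a random bounded operator. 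This proves (i).

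For (ii): if $K$ is a.s. unitary, then by the last assertion of Theorem \ref{t:AM-dis} the operator $\wh\A_\om$ is a.s. selfadjoint. By part (i), $(\wh\A-z)^{-1}$ is already known to be a random bounded operator for every $z\in\CC_+$; to match Definition of random-selfadjoint I also need $z\in\CC_-$. But for selfadjoint $\wh\A_\om$ one has $(\wh\A_\om-\bar z)^{-1}=\big((\wh\A_\om-z)^{-1}\big)^*$, and the adjoint of a random bounded operator is a random bounded operator (since $((\wh\A_\om-\bar z)^{-1}f|g)=\overline{((\wh\A_\om-z)^{-1}g|f)}$ is a random variable), which extends the conclusion to all $z\in\CC\setminus\RR$.

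\textbf{Expected main obstacle.} The delicate point is the passage from "$(Kf|g)$ is a random variable for all $f,g$" to strong measurability of the operator-valued map $\om\mapsto F(K_\om)^{-1}$, and in particular justifying that inversion preserves this measurability on the (a.s.) set where $F(K_\om)$ is invertible: one must combine separability of the Hilbert spaces, the equivalence of weak and strong measurability (Pettis), and the norm-continuity of operator inversion on the open invertible set, while keeping careful track that $F$ maps between the two different boundary spaces $H^{-1/2}(\paD)$ and $L^2(\paD)$ related by the homeomorphisms $V^{-1},V^\cross$. A secondary bookkeeping issue is confirming that $z\in\CC_+$ indeed lies in $\rho(\A^\Nr)\cap\rho(\wh\A_\om)$ so that \eqref{e:KFDtN} is literally applicable without any further exceptional null set.
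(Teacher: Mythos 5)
Your overall route is exactly the paper's: verify a.s. m-dissipativity via Theorem \ref{t:AM-dis}, then use the Krein-type formula \eqref{e:KFDtN} to express $((\wh\A_\om-z)^{-1}\Psi|\Phi)$ through deterministic holomorphic data and the single random factor $[E_1V^{-1}M_\NtD(z)+E_0V^\cross]^{-1}E_1$, and finally obtain (ii) from $(\wh\A-\bar z)^{-1}=((\wh\A-z)^{-1})^*$. The one place where your argument has a genuine gap is the measurability of the inverse. You propose to pass from weak measurability of $\om\mapsto F(K_\om)$ (i.e.\ measurability of all matrix entries) to strong measurability of the operator-valued map and then compose with the norm-continuous inversion on the invertible group. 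But Pettis's theorem only upgrades weak to strong measurability for maps into a \emph{separable} Banach space; it gives strong measurability of the vector-valued maps $\om\mapsto F(K_\om)f$, not of $\om\mapsto F(K_\om)$ as a map into $\Lc(\Xf)$ with the operator norm, which is non-separable. So ``weak measurability plus norm-continuity of inversion'' does not, as stated, yield measurability of $\om\mapsto F(K_\om)^{-1}$. This is precisely the point the paper handles by citing the Han\v{s} theorem on random inverse operators (Proposition \ref{p:HTh}): if $T$ is a random bounded operator that is a.s.\ a linear homeomorphism, then $T^{-1}$ is a random bounded operator. Its proof is not a one-line continuity argument (a standard route uses $T^{-1}=(T^*T)^{-1}T^*$, Neumann-series approximations of $(T^*T)^{-1}$ on a countable partition of $\Om$ controlling $\|T\|$ and the lower bound of $T^*T$, and Lemma \ref{l:prodRB} on products). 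You correctly flag this as the main obstacle but do not close it; citing or reproving the Han\v{s} theorem is the missing ingredient.

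Two minor bookkeeping remarks. The paper first multiplies by $(V^\cross)^{-1}$ on the right so that the operator to be inverted, $E_1V^{-1}M_\NtD(z)(V^\cross)^{-1}+E_0$, acts in the single space $L^2(\pa\D)$, which lets Proposition \ref{p:HTh} be applied verbatim; keeping the two boundary spaces $H^{-1/2}(\pa\D)$ and $L^2(\pa\D)$ as you do is workable but forces you to restate the inversion lemma between different spaces. Also, in \eqref{e:KFDtN} the pairing is directly with $\ga_\Dr(\bar z)\Phi$ under the $L^2(\pa\D)$-pairing, so no adjoint $\ga_\Dr(\bar z)^*$ is needed. Part (ii) of your argument is correct and matches the paper.
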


We prove this theorem in Section \ref{s:RandProof} combining the Hanš theorem on random inverse operators (see Proposition \ref{p:HTh}) with the Krein-type resolvent formula of Theorem \ref{t:aKrF}.

For $0 <  p \le \infty$, we denote by $\Sf_p = \Sf_p (\Hs)$  the Schatten-von-Neumann ideals of compact operators in a separable complex Hilbert space $\Hs$ (if the choice of the space $\Hs$ is clear from the context we omit $\Hs$ in the notation $\Sf_p$).  The operator trace $\Tr \in \Lc(\Sf_1 (\Hs), \CC )$ is well-defined on the trace class $ \Sf_1 $ \cite{DZ92,Kato}.
The class $\Sf_2 $ of Hilbert–Schmidt operators is a separable Hilbert space with the inner product $(R|T)_{\Sf_2} := \Tr (T^*R)$ and with the norm $\| T \|_{\Sf_2} = (\Tr (T^*T))^{1/2}$.

Let $\{e_j\}_{j\in \NN}$ be a certain deterministic  orthonormal basis in $L^2 (\pa \D)$. Then $\{e_j \otimes e_k\}_{j,k\in \NN}$ is an orthonormal basis in $\Sf_2 (L^2 (\pa \D))$.
Note that
\begin{gather} \label{e:SfRandV}
\text{a $\Sf_2 (L^2 (\pa \D))$-valued random vector is a random bounded operator in $L^2 (\pa \D)$  \cite{DZ92}}. 
\end{gather}
In order to see this, one considers random variables $\Tr \left( [e_1 \otimes g] T [f \otimes e_1] \right)$ for $f,g \in L^2 (\pa \D) $.

\begin{ex} \label{e:Gaussian0} 
Examples of random contractions for Theorem \ref{t:randM-dis} can be built from well-known distributions.
For  instance, let $\Kc_0 \in \Lc (L^2 (\pa \D))$ be a deterministic operator. Let $J$ be a
Gaussian $\Sf_2 (L^2 (\pa \D))$-valued random vector (in a narrow or in a wide sense \cite{VK97}) on a certain complete probability space $(\wh \Om, \wh \PP, \wh \Fc)$. By \eqref{e:SfRandV}, $\wh K :=\Kc_0+J$ is a random bounded operator. Assume additionally that $\Kc_0$ and $J$ are such that the event $\Om := \{\| \wh K \| \le 1 \}$ is of positive probability. Then conditional probabilities $\PP (E) := \wh \PP (E|\Om) $,
$E \in \wh \Fc$, are well-defined. Taking $\Fc := \{ E \in \wh \Fc : E \subseteq \Om\}$, one obtains a new probability space $(\Om,\Fc,\PP)$ and sees that $K:= \wh K\uph_{\Om}$ is a random contraction on this space. 
\end{ex}


\begin{ex}[random matrices] \label{ex:RandomMatrix}
Let us fix certain $n \in \NN$ and $c \in \TT := \{z\in \CC: |z|=1\}$.
Let $\Hs_n:=\Span \{e_j\}_{j=1}^n$ be the  linear hull of $\{e_j\}_{j=1}^n$.
Consider a unitary circular ensemble  $U^{[n]}$ in $\Hs_n$ \cite{BO01} and the identity operator $I_{\Hs_n^\perp}$ in the orthogonal complement $\Hs_n^\perp := L^2 (\pa \D) \ominus \Hs_n$.
Then the orthogonal sum $\Kc^{[n]} = U^{[n]} \oplus c I_{\Hs_n^\perp}$ is random unitary operator in $L^2 (\pa \D)$, which defines  a random-selfadjoint acoustic operator $\wh A^{[n]}$ via Theorem \ref{t:randM-dis} (with $K=\Kc^{[n]}$). In the logic of random matrix theory, the point of interest is the possibility to pass to reasonable 
 weak limits of resolvents $(\wh \A^{[n_k]} - \la)^{-1}$
for subsequences $n_k \to \infty$.
\end{ex}

Our attention will be concentrated on  more explicit constructions of random contractions. Let 
 $\Ic = \NN$ or $\Ic = \NN^2$ be a countable set of indices.
Let $D_i  \in \Lc (L^2 (\pa \D))$, $i \in \Ic$, and $\Kc_0 \in \Lc (L^2 (\pa \D)) $ be deterministic operators. Assume that  complex random variables $\xi_i$, $i \in \Ic$, are such that
the sum $\sum_{i \in \Ic} \xi_i D_i$ converges to a certain $\Kc_1: \Om \to \Lc (L^2 (\pa \D))$
a.s. w.r.t. the operator norm or w.r.t. the $\Sf_2$-norm (if the order of summation is defined one can consider also the weak operator convergence, see Example \ref{ex:normal} below).
Then $\Kc_1$ and $K=\Kc_0+\Kc_1$ are random bounded operators. We consider several ways to ensure that $K$ is a random contraction, and so, that a random-m-dissipative operator $\wh \A$ of Theorem \ref{t:randM-dis} is well-defined.

\begin{ex}[shifted $\Sf_2$-distributions] \label{ex:K0+S2}
The double sequence $\{D_{j,k}\}_{j,k \in \NN} = \{e_j \otimes e_k\}_{j,k\in \NN}$ is an orthonormal basis in $\Sf_2 (L^2 (\pa \D))$. Consider a deterministic $\Kc_0 \in \Lc (L^2 (\pa \D))$ and a double sequence $\{r_{j,k}\}_{j,k \in \NN} \subset \overline{\RR}_+$ of nonnegative numbers such that 
$\|\Kc_0\| + (\sum_{j,k\in \NN} r_{j,k}^2)^{1/2} \le 1$. Let $\xi_{j,k}$, $j,k \in \NN$, be random variables with values in the complex discs $\{z \in \CC:|z|\le r_{j,k}\}$.
Then, with probability 1, the sum $\sum_{j,k\in \NN} \xi_{i,j} (e_j \otimes e_k) $ converges absolutely in $\Sf_2$  to a random $\Sf_2$-vector $J$ with $\|J\|^2 \le \|J\|_{\Sf_2}^2 \le (1-\|\Kc_0\|)^2 $.
Using \eqref{e:SfRandV}, we see that  $K=K_0 +J$ is a random contraction. 
\end{ex}

\begin{ex}[quasi-uniform distributions] \label{ex:quasi-uniform}
Let $\Kc_0$ and $D_j$, $j\in \NN$, be deterministic contractions in $L^2 (\pa \D)$.
Consider a sequence of nonnegative numbers $\{a_j\}_{j \in \NN} \subset \overline{\RR}_+$ such that $\|\Kc_0\|+\sum_{j=1}^\infty a_j \le  1$.  Let $\xi_j$, $j \in \NN$, be independent random variables uniformly distributed in the intervals $(-a_j,a_j)$ (or in the  complex discs $\{z \in \CC : |z|<a_j\}$).  
Then $K = \Kc_0 + \sum_{j=1}^\infty \xi_j D_j $ is a random contraction. 
\end{ex}

Example \ref{ex:quasi-uniform} is based on  the convexity of a closed unit ball $\overline{\BB}_1 (0;\Yf) = \{y\in \Yf : \|y\|_{\Yf} \le 1\}$ in a Banach space $\Yf$. The same idea is employed  in the following more flexible construction, which will be useful in Section \ref{s:PartComp}.

\begin{ex}
\label{ex:ncq-uD}
Let $\Kc_0 \in \Lc (\Hs)$ be a contraction.  We say that an operator $D \in \Lc (\Hs)$ is an admissible direction from $\Kc_0$ if there exists a constant $c>0$ such that $\|\Kc_0+ c   D \| \le 1$. If this is the case,
then the convex hull $S (\Kc_0,D)$ of all complex numbers $z$ such that $\|\Kc_0+ z   D \| \le 1$ contains 
a segment connecting $0$ and $c$. Clearly, $\|\Kc_0+ z   D \| \le 1$ for every $z \in S (\Kc_0,D)$.
We take $\Hs = L^2 (\pa \D)$ and consider  directions $ D_j $, $j \in \NN$, admissible from a contraction $\Kc_0 \in  \Lc (\Hs)$. Assume that constants  $b_j>0$ satisfy  
$\sum_{j \in \NN} b_j \le 1$. Let $\xi_j$ be a   
complex random variable with values in $S (\Kc_0,D_j)$ for each  $j \in \NN$.
Then the convexity of  $\overline{\BB}_1 \left( 0;\Lc (L^2 (\pa \D)) \right)$ implies that the sum  
$K = \Kc_0 + \sum_{j \in \NN} b_j \xi_j   D_j $ a.s. converges in the operator norm to a random contraction $K$.
\end{ex}

\subsection{Discrete spectra and the partial compactness of resolvents}
\label{s:PartComp}

The relatively simple examples of random-m-dissipative acoustic operators $\wh A$ in the previous subsection typically do not satisfy the property of the partial  resolvent compactness  (A3) introduced in Section \ref{s:i}.
Rigorous versions of this statement are given by Remarks  \ref{r:non-comp1}-\ref{r:ShiftedConvex}.
They follow from Theorem \ref{t:AM-disComp}, which is the main result of this subsection and which gives a  characterization of property (A3) in terms of random contractions $K$.
 

An eigenvalue $\la$ of an operator $T:\dom T \subseteq \Xf \to \Xf$ is called \emph{isolated} if $\la$ is an isolated point of the spectrum $\si (T)$ of $T$. The \emph{discrete spectrum} $\si_\disc (T)$ of $T$ is the set of isolated eigenvalues of $T$ with finite algebraic multiplicities (see \cite{Kato} and \cite[Vol.4]{RS}). We say that $T$ has  \emph{purely discrete spectrum} if $\si(T) = \si_\disc (T)$. The closed set $\si_\ess (T) := \si (T) \setminus \si_\disc (T)$ is called an \emph{essential spectrum} of $T$  \cite[Vol.4]{RS}.
Assume now that $\la_0$ belongs to the resolvent set $\rho (T)$ of $T$ and the resolvent $(T-\la_0 )^{-1}$ at $\la_0$ is a compact operator. 
Then $(T-\la)^{-1}$ is compact for every $\la \in \rho (T)$; in this case, it is said that $T$ is an \emph{operator with compact resolvent} (this definition assumes $\rho (T) \neq \varnothing$). 
Note that an operator $T$ with compact resolvent has purely discrete spectrum \cite{Kato} and, in particular, cannot have an infinite-dimensional kernel.

The kernel $
\ker \Div_0 = \HH_0 (\Div 0,\D) = \{ \ubf \in \HH_0 (\Div,\D) \ : \ \Div \ubf  = 0 \}$ of the operator $\Div_0$ is a closed subspace of $\LL^2_{\ab} (\D)$.
The following  orthogonal decomposition takes place
\begin{gather} \label{e:La}
\LL^2_{\ab} (\D) = \HH_0 (\Div 0,\D) \oplus \ab^{-1} \gradm H^1 (\D) , 
\end{gather}
where $ \ab^{-1} \gradm H^1 (\D) := \{ \ab^{-1} \nabla p : p \in H^1 (\D)\}$ equipped with the norm of  $\LL^2_{\ab} (\D)$ is a Hilbert space. Indeed, 
the adjoint operator to $\ab^{-1} \gradm : H^1 (\D) \subset L^2 (\D) \to \LL^2_{\ab} (\D)$ equals $\Div_0: \HH_0 (\Div,\D) \subset \LL^2_{\ab} (\D) \to L^2 (\D) $. Since the normed space $( \ab^{-1} \gradm H^1 (\D) , \| \cdot \|_{\LL^2_{\ab}})$ is complete (see \cite[Section 7]{L13}), one obtains  \eqref{e:La} from \cite[Theorem IV.5.13]{Kato}.

We see that the space $ \LL^2_{\ab,\beta} (\D) = \LL^2_{\ab} (\D) \oplus L^2_\beta (\D) $ 
has the (orthogonal) decomposition
\begin{gather} \label{e:LabDec}
\LL^2_{\ab,\beta} (\D) = \HH_0 (\Div 0,\D) \oplus \GG_{\ab,\beta} , 
\end{gather}
where $ \GG_{\ab,\beta} := \ab^{-1} \gradm H^1 (\D) \oplus L^2_{\beta} (\D) $ and $\HH_0 (\Div 0,\D)$ are perceived as closed subspaces of  $\LL^2_{\ab,\beta} (\D)$ and are equipped with the norm of 
 $\LL^2_{\ab,\beta} (\D)$.
In these settings, $\HH_0 (\Div 0,\D) = \ker \A$ for the closed symmetric operator $\A$ of \eqref{e:A}, and so,
 $\HH_0 (\Div 0,\D)$ is a reducing subspace of $\A$ (see \cite{AG} and Section \ref{s:SAPart} for basic facts about reducing subspaces). That is, the decomposition \eqref{e:LabDec} reduces $\A$ to the orthogonal sum
$\A = 0 \oplus \A |_{\GG_{\ab,\beta}}$, where the part 
$\A|_{\HH_0 (\Div 0,\D)}$ of $\A$ in the space $\HH_0 (\Div 0,\D)$ is the zero operator.

 Since $\HH_0 (\Div 0,\D)$ is infinite-dimensional, any of the m-dissipative extensions $\wh \A$ of $\A$ described in Theorem \ref{t:AM-dis} does not possess a compact resolvent. 
 From the point of view of the time-dependent acoustic equation, the elements of $\ker \A$ are stationary solutions that represent the static background vector field. They can be disregarded in the study of the dynamical properties (see Section \ref{s:El}).
 One of the ways to use the compactness of resolvent for the study of $\si (\wh \A)$ is to consider the part of $\wh \A$ in the reducing subspace $\GG_{\ab,\beta}$.

\begin{lem}
\label{l:AcReducing}
(i) The decomposition \eqref{e:LabDec}  reduces $\A^*$  to $\A^* = 0 \oplus \A^* |_{\GG_{\ab,\beta}} = 0 \oplus (\A |_{\GG_{\ab,\beta}})^*$ and reduces every dissipative extension $\wt \A$ of $ \A$ to $\wt \A = 0 \oplus \wt \A |_{\GG_{\ab,\beta}}$.
\item[(ii)] Let an acoustic operator $\wh \A$ be defined as in Theorem \ref{t:randM-dis}. The decomposition \eqref{e:LabDec} reduces $\wh \A$ a.s.  to $\wh \A = 0 \oplus \wh \A |_{\GG_{\ab,\beta}}$, where $\wh \A |_{\GG_{\ab,\beta}}$ is a random-m-dissipative operator in 
$\GG_{\ab,\beta}$.
\end{lem}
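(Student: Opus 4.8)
The plan is to establish the lemma in two steps, treating part (i) first since part (ii) reduces to it together with the randomness statement of Theorem \ref{t:randM-dis}. For part (i), the key point is that $\ker \A = \HH_0 (\Div 0,\D) \oplus \{0\}$ is a \emph{reducing} subspace for $\A$, as already noted in the text via the decomposition $\A = 0 \oplus \A|_{\GG_{\ab,\beta}}$. I would first recall the standard fact (see \cite{AG} and Section \ref{s:SAPart}) that if a closed subspace $\Lc_0$ reduces a densely defined operator $S$, then it reduces $S^*$ as well, and moreover $(S^*)|_{\Lc_0^\perp} = (S|_{\Lc_0^\perp})^*$; applied to $S = \A$ and $\Lc_0 = \HH_0(\Div 0,\D)\oplus\{0\}$ this gives $\A^* = 0 \oplus \A^*|_{\GG_{\ab,\beta}} = 0 \oplus (\A|_{\GG_{\ab,\beta}})^*$. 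For the statement about a dissipative extension $\wt\A$ of $\A$, I would argue as follows: since $\dom \A \subseteq \dom \wt\A$ and $\A$ is the zero operator on $\ker \A$, every $\Psi \in \HH_0(\Div 0,\D)\oplus\{0\}$ lies in $\dom \wt\A$ with $\wt\A \Psi = \A \Psi = 0$; and since $\wt\A$ is a restriction of $\A^*$ (because $\wt\A \supseteq \A$ dissipative forces $\wt\A \subseteq (\wt\A)^* \subseteq \A^*$ — or more directly, in the acoustic context every dissipative extension acts by the differential expression $\af_{\ab,\beta}$), the reducing decomposition of $\A^*$ restricts to $\wt\A$. The one point needing a little care is verifying $P_{\GG_{\ab,\beta}} \dom \wt\A \subseteq \dom \wt\A$, i.e. that $\wt\A$ and not merely $\A^*$ respects the splitting; this follows because for $\Psi \in \dom\wt\A$ we can write $\Psi = \Psi_0 + \Psi_1$ with $\Psi_0 \in \ker\A \subseteq \dom\wt\A$, hence $\Psi_1 = \Psi - \Psi_0 \in \dom\wt\A$, and then $\wt\A\Psi = \wt\A\Psi_1 = \A^*\Psi_1 \in \GG_{\ab,\beta}$ by part (i) applied to $\A^*$.

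For part (ii), the acoustic operator $\wh\A$ of Theorem \ref{t:randM-dis} is, for a.a.\ $\om$, an m-dissipative extension of $\A$ (by Theorem \ref{t:AM-dis}, since $K_\om$ is a.s.\ a contraction); hence part (i) applies pointwise in $\om$ on an event $\Om_1$ of probability $1$, giving $\wh\A_\om = 0 \oplus \wh\A_\om|_{\GG_{\ab,\beta}}$ for $\om \in \Om_1$. It remains to check that $\wh\A|_{\GG_{\ab,\beta}} : \om \mapsto \wh\A_\om|_{\GG_{\ab,\beta}}$ is a random-m-dissipative operator in $\GG_{\ab,\beta}$ in the sense of Definition \ref{d:rmDis}. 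That $\wh\A_\om|_{\GG_{\ab,\beta}}$ is a.s.\ m-dissipative in $\GG_{\ab,\beta}$ follows from the reducing decomposition: an orthogonal summand of an m-dissipative operator is m-dissipative (the resolvent estimate and the half-plane $\CC_+ \subseteq \rho$ pass to direct summands). For the measurability of the resolvent, I would use that $(\wh\A_\om - z)^{-1} = 0 \oplus (\wh\A_\om|_{\GG_{\ab,\beta}} - z)^{-1}$ relative to \eqref{e:LabDec}, so for $f, g \in \GG_{\ab,\beta}$ we have $((\wh\A_\om|_{\GG_{\ab,\beta}} - z)^{-1} f \mid g)_{\GG_{\ab,\beta}} = ((\wh\A_\om - z)^{-1} f \mid g)_{\LL^2_{\ab,\beta}}$, which is a random variable by Theorem \ref{t:randM-dis}(i), i.e.\ because $\wh\A$ itself is random-m-dissipative; since $\GG_{\ab,\beta}$ is separable this yields that $(\wh\A|_{\GG_{\ab,\beta}} - z)^{-1}$ is a random bounded operator for each $z \in \CC_+$.

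The main obstacle is conceptual rather than computational: it is the careful bookkeeping around \emph{reducing} (as opposed to merely \emph{invariant}) subspaces and extensions — in particular making precise that a dissipative extension $\wt\A$ of $\A$ is necessarily a restriction of $\A^*$ compatible with the splitting \eqref{e:LabDec}, and that ``orthogonal summand'' genuinely transfers m-dissipativity and the measurability of resolvents. I expect all of this to be routine once the right abstract facts about reducing subspaces from Section \ref{s:SAPart} are invoked, but the argument that $\wt\A \subseteq \A^*$ for every dissipative extension $\wt\A$ deserves an explicit line, since it is what makes the whole decomposition available; in the acoustic setting this is immediate from the observation that the dissipativity of $\wt\A$ together with $\wt\A \supseteq \A$ forces $\wt\A \subseteq (\wt\A)^* \subseteq \A^*$.
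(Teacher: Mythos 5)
Your overall route coincides with the paper's: the paper proves (i) by invoking the standard facts on reducing subspaces (\cite[Theorem 46.5]{AG}, together with Proposition \ref{p:reductionA}, i.e.\ reduction by the eigenspace $\ker \A = \HH_0 (\Div 0,\D)\oplus\{0\}$ of the real eigenvalue $0$) and the fact that every dissipative extension of $\A$ is an intermediate extension, and then obtains (ii) pointwise in $\om$ from (i) plus Theorem \ref{t:randM-dis}. Your treatment of part (ii) — m-dissipativity and the resolvent bound pass to orthogonal summands, and measurability of $((\wh \A|_{\GG_{\ab,\beta}}-z)^{-1}f\mid g)$ follows from that of $((\wh \A-z)^{-1}f\mid g)$ — is correct and is exactly what the paper intends.

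There is, however, one genuinely wrong step in part (i), and it occurs precisely at the point you yourself single out as ``what makes the whole decomposition available.'' You justify $\wt \A \subseteq \A^*$ by the chain ``$\wt\A \supseteq \A$ dissipative forces $\wt\A \subseteq (\wt\A)^* \subseteq \A^*$.'' The inclusion $\wt\A \subseteq (\wt\A)^*$ characterizes \emph{symmetric} operators, not dissipative ones: for instance $\wt\A = -\ii I$ is dissipative ($\im(-\ii f\mid f) = -\|f\|^2 \le 0$) but $(\wt\A)^* = \ii I$, so $\wt\A \not\subseteq (\wt\A)^*$. The conclusion you need — that every dissipative extension of a closed symmetric operator is a restriction of its adjoint — is true, but it is the Phillips result cited in the paper (Remark \ref{r:AThM-dis}, \cite{P59,GG91}); its proof runs differently, e.g.\ by expanding $\im\bigl(\wt\A(f+tcg)\mid f+tcg\bigr)\le 0$ for $f\in\dom\wt\A$, $g\in\dom\A$, $|c|=1$, $t\in\RR$, and letting $t\to\pm\infty$ to conclude $(\wt\A f\mid g)=(f\mid\A g)$. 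Your fallback remark that ``in the acoustic context every dissipative extension acts by the differential expression'' is essentially a restatement of the same fact rather than a proof. Once this step is replaced by the correct citation or argument, the rest of your bookkeeping (in particular the verification that $P_{\GG_{\ab,\beta}}\dom\wt\A\subseteq\dom\wt\A$ via $\Psi_1=\Psi-\Psi_0$ with $\Psi_0\in\ker\A\subseteq\dom\A\subseteq\dom\wt\A$) is sound.
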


\begin{proof} The proof of statement (i) can be  obtained from \cite[Theorem 46.5]{AG}  by an adaptation of the arguments used for Maxwell operators in \cite[Remark 2.2]{EK22}  (see also  Proposition \ref{p:reductionA} below). 
The stochastic statement (ii) follows from the deterministic statement (i).
\end{proof}

Assume that $K:\om \mapsto K_\om$ is a random contraction in $L^2 (\pa \D)$ and $\wh \A$ is a random-m-dissipative acoustic operator defined as in Theorem \ref{t:randM-dis}. 
Compactness criteria for operators  imply that $\{\om \in \Om: K+I \in \Sf_\infty (L^2 (\pa \D))\} \in \Fc$,  i.e., this set of $\om$ is an event (see Lemma \ref{l:CompR}).

\begin{thm}\label{t:AM-disComp}
(i) The part $\wh \A|_{\GG_{\ab,\beta}}$ of  $\wh \A$ in $\GG_{\ab,\beta}$ has a compact resolvent with probability 
$\PP \{K+I \in \Sf_\infty (L^2 (\pa \D))\}$. In particular, the resolvent of $\wh \A|_{\GG_{\ab,\beta}}$ is a.s. compact 
if and only if 
\begin{gather} \label{e:K+Icomp}
\text{the operator $K+I_{L^2 (\pa \D)}$ is a.s. compact.}
\end{gather} 
\item[(ii)] If \eqref{e:K+Icomp} holds, then  a.s.
$
\si (\wh \A|_{\GG_{\ab,\beta}}) = \si_\disc (\wh \A|_{\GG_{\ab,\beta}}) $ and  $\si (\wh \A) = \si_p (\wh \A)$.
\item[(iii)] Assume additionally that $K$ is a.s. unitary.
Then $\wh \A|_{\GG_{\ab,\beta}}$ is a random-selfadjoint operator in $\GG_{\ab,\beta}$. Moreover, 
$
\PP \{ \si (\wh \A|_{\GG_{\ab,\beta}}) = \si_\disc (\wh \A|_{\GG_{\ab,\beta}}) \} = 
\PP \{K+I \in \Sf_\infty (L^2 (\pa \D)) \} .
$
\end{thm}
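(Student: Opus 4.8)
\textbf{Proof plan for Theorem \ref{t:AM-disComp}.}

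The plan is to reduce everything to the acoustic Krein-type resolvent formula \eqref{e:KFDtN} of Theorem \ref{t:aKrF} and to the behaviour of the operator $E_1 V^{-1} M_{\NtD}(\la) + E_0 V^\cross$ as a function of the random contraction $K$, where $E_0 = I - K$ and $E_1 = -\ii(I+K)$. First I would fix, once and for all, a point $\la \in \CC_+$; by Theorem \ref{t:randM-dis} we have $\la \in \rho(\wh\A)$ a.s. and $\la \in \rho(\A^\Nr)$ (since $\A^\Nr$ is selfadjoint). By Lemma \ref{l:AcReducing}(ii), $\wh\A = 0 \oplus \wh\A|_{\GG_{\ab,\beta}}$, and the analogous decomposition holds for $\A^\Nr$, so the difference of resolvents on the left of \eqref{e:KFDtN} is supported in $\GG_{\ab,\beta}$ and equals the difference of the resolvents of the parts. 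The resolvent $(\wh\A|_{\GG_{\ab,\beta}} - \la)^{-1}$ is compact iff $(\A^\Nr|_{\GG_{\ab,\beta}} - \la)^{-1}$ differs from it by a non-compact operator is impossible to separate that way — instead the right plan is: $\A^\Nr|_{\GG_{\ab,\beta}}$ already \emph{has} compact resolvent (the Neumann acoustic operator restricted to the complement of its infinite-dimensional kernel; this should be quoted from the analysis behind Theorem \ref{t:NtD}, or included as a preliminary lemma), so $(\wh\A|_{\GG_{\ab,\beta}} - \la)^{-1}$ is compact if and only if the right-hand side of \eqref{e:KFDtN}, viewed as a bounded operator on $\LL^2_{\ab,\beta}(\D)$ (equivalently on $\GG_{\ab,\beta}$), is compact.

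Next I would analyze that right-hand side operator, call it $R(\la) = \ga_\Dr(\overline\la)^* [E_1 V^{-1} M_{\NtD}(\la) + E_0 V^\cross]^{-1} E_1 V^{-1} \ga_\Dr(\la)$. The maps $\ga_\Dr(\la)$ and $\ga_\Dr(\overline\la)^*$ are bounded between $\LL^2_{\ab,\beta}(\D)$ and $H^{1/2}(\pa\D)$, hence $\ga_\Dr(\la)$ maps into $H^{1/2}(\pa\D)$, which embeds compactly into $H^{-1/2}(\pa\D)$ and into $L^2(\pa\D)$. So $R(\la)$ is automatically compact \emph{unless} the middle factor $[E_1 V^{-1} M_{\NtD}(\la) + E_0 V^\cross]^{-1}$ is sufficiently unbounded between the relevant spaces to kill the compactness gained from the trace maps; more precisely I would show $R(\la)$ is compact iff the operator $[E_1 V^{-1} M_{\NtD}(\la) + E_0 V^\cross]^{-1} E_1 V^{-1}$, composed with the compact embedding $H^{1/2}(\pa\D)\hookrightarrow H^{-1/2}(\pa\D)$ on one side, is compact as an operator on $H^{1/2}(\pa\D)$. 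Writing $M_{\NtD}(\la) = (M_{\NtD}(\la))^\cross$-type identities and using $V = (I+\De^\paD)^{-1/4}$ from Lemma \ref{l:VV*} together with the smoothing property of the NtD map (it maps $H^{-1/2}$ into $H^{1/2}$, a gain of one derivative), one identifies the genuinely non-compact part of $E_1 V^{-1} M_{\NtD}(\la) + E_0 V^\cross$ with $E_0 V^\cross = (I-K)V^\cross$ modulo compacts: the $E_1 V^{-1} M_{\NtD}(\la)$ term is compact on its own because $V^{-1} M_{\NtD}(\la)$ lands in $H^{1/2+1/2} = H^{1}$... more carefully it maps $L^2(\pa\D) \to H^{1/2}$, which is compactly included in $L^2(\pa\D)$. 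Hence $E_1 V^{-1} M_{\NtD}(\la)$ is a.s. compact, and the invertibility of the whole sum reduces modulo $\Sf_\infty$ to that of $E_0 V^\cross = (I-K)V^\cross$, i.e. of $I - K$ on $L^2(\pa\D)$. Since $I-K$ is invertible nowhere-boundedness is not an issue when $\la \in \CC_+$ (the sum is invertible a.s. by Theorem \ref{t:randM-dis}(i) and Theorem \ref{t:aKrF}(i)), a standard Fredholm/Neumann-series perturbation argument shows $[E_1 V^{-1} M_{\NtD}(\la) + E_0 V^\cross]^{-1}$ equals $[(I-K)V^\cross]^{-1}$ plus a compact operator, wherever the former is bounded; and feeding this into $R(\la)$, together with the already-available compact embedding, gives that $R(\la)$ is compact if and only if $(I - K)^{-1}$, sandwiched between the compact embedding $H^{1/2}\hookrightarrow L^2$, is — which, after unwinding, is equivalent to the compactness of $K + I$. (The switch from $I - K$ to $I + K$ comes from the factor $E_1 = -\ii(I+K)$ sitting to the right of the inverse: $[(I-K)V^\cross]^{-1}(-\ii)(I+K)V^{-1} = -\ii (V^\cross)^{-1}(I-K)^{-1}(I+K)V^{-1}$, and $(I-K)^{-1}(I+K) = -I + 2(I-K)^{-1}$, so compactness of this operator is equivalent to compactness of $(I-K)^{-1} - $ (bounded), which since $I-K$ is boundedly invertible a.s. — wait, it need not be — is where I must be careful.) I would therefore handle the two regimes separately: on the event $\{1 \notin \si_\pr(K)\}$ use the impedance form \eqref{e:Imp} and Theorem \ref{t:Imp}; in general pass to the Cayley-transform-type identity $K+I$ vs. $(I-K)^{-1}(I+K)$ directly at the level of $\Sf_\infty$, using that $K$ is a contraction so $\im((I-K)^{-1}f\mid f) \ge 0$ and the resolvent bound keeps things under control. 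The cleanest route: establish the \emph{algebraic} equivalence "$R(\la)$ compact $\iff$ $K+I$ compact" by showing both are equivalent to "$(\wh\A|_{\GG_{\ab,\beta}} - \la)^{-1}$ compact", using that $\si_\ess$ of an m-dissipative operator is closed and that compactness of the resolvent at one point propagates to all of $\rho$.

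For the probabilistic statement (i) I would then invoke Lemma \ref{l:CompR} (which says $\{\om : K+I \in \Sf_\infty\} \in \Fc$) and the deterministic equivalence just proved, fibrewise: for $\PP$-a.e. $\om$, $(\wh\A_\om|_{\GG_{\ab,\beta}} - \la)^{-1}$ is compact iff $K_\om + I \in \Sf_\infty(L^2(\pa\D))$, so the probability that $\wh\A|_{\GG_{\ab,\beta}}$ has compact resolvent is exactly $\PP\{K+I \in \Sf_\infty(L^2(\pa\D))\}$; the "in particular" is the case when this probability equals $1$. Statement (ii) is then immediate: on the event $\{K+I\in\Sf_\infty\}$, $\wh\A|_{\GG_{\ab,\beta}}$ has compact resolvent hence purely discrete spectrum $\si = \si_\disc$ by the cited fact from \cite{Kato}, and since $\wh\A = 0 \oplus \wh\A|_{\GG_{\ab,\beta}}$ with $0$ an eigenvalue on the infinite-dimensional space $\HH_0(\Div 0,\D)$, the whole spectrum $\si(\wh\A) = \{0\} \cup \si_\disc(\wh\A|_{\GG_{\ab,\beta}})$ consists of eigenvalues, i.e. $\si(\wh\A) = \si_\pr(\wh\A)$. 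For statement (iii): when $K$ is a.s. unitary, Theorem \ref{t:randM-dis}(ii) and Lemma \ref{l:AcReducing}(ii) give that $\wh\A|_{\GG_{\ab,\beta}}$ is random-selfadjoint; applying part (i) to this selfadjoint case and noting that for a selfadjoint operator with $\rho \neq \varnothing$ one has $\si = \si_\disc$ exactly when the resolvent is compact (there is no room for non-real non-isolated spectrum), the probability in question equals $\PP\{K+I\in\Sf_\infty(L^2(\pa\D))\}$, as claimed. The main obstacle I anticipate is the step isolating the non-compact part of $E_1 V^{-1} M_{\NtD}(\la) + E_0 V^\cross$ — i.e. rigorously showing that modulo $\Sf_\infty$ the invertibility and its inverse are governed solely by $I - K$ (equivalently $I+K$), uniformly enough to transfer compactness across the inverse on the (possibly bad) event where $I-K$ fails to be boundedly invertible; this requires careful bookkeeping with the compact Sobolev embeddings $H^{1/2}(\pa\D) \hookrightarrow L^2(\pa\D) \hookrightarrow H^{-1/2}(\pa\D)$, the one-derivative smoothing of $M_{\NtD}(\la)$, and the mapping properties of $V = (I+\De^\paD)^{-1/4}$.
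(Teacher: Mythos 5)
Your overall architecture (compare with the Neumann operator $\A^\Nr$, use that its part in $\GG_{\ab,\beta}$ already has a compact resolvent, reduce everything to a deterministic equivalence applied fibrewise in $\om$ with Lemma \ref{l:CompR} supplying measurability, then get (ii) from Remark \ref{r:DiscDet} and (iii) from the selfadjoint case) is the same as the paper's. The probabilistic superstructure is fine. The gap is in the deterministic core: your attempt to extract the equivalence ``resolvent of $\wh\A|_{\GG_{\ab,\beta}}$ compact $\iff$ $K+I\in\Sf_\infty$'' directly from the explicit Krein formula \eqref{e:KFDtN} does not go through as written. The pivotal claim that ``$E_1 V^{-1} M_{\NtD}(\la)$ is a.s. compact'' is false: by Theorem \ref{t:NtD}(ii), $M_{\NtD}(\la)\in\Hom(H^{-1/2}(\pa\D),H^{1/2}(\pa\D))$, so $V^{-1}M_{\NtD}(\la)(V^\cross)^{-1}$ is a \emph{homeomorphism} of $L^2(\pa\D)$ (there is no extra derivative gain beyond the $H^{-1/2}\to H^{1/2}$ mapping), and hence $E_1V^{-1}M_{\NtD}(\la)$ is compact precisely when $E_1=-\ii(I+K)$ is --- which is the very condition you are trying to characterize. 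Consequently the reduction ``modulo $\Sf_\infty$ the invertibility is governed by $E_0V^\cross=(I-K)V^\cross$'' collapses, and with it the converse direction (resolvent difference compact $\Rightarrow K+I$ compact); the forward direction does survive, since $E_1$ sits as an explicit factor on the right-hand side of \eqref{e:KFDtN}. Your proposed fallback --- ``show both are equivalent to compactness of $(\wh\A|_{\GG_{\ab,\beta}}-\la)^{-1}$'' --- is circular, and you yourself flag (``wait, it need not be'') that the argument breaks on the event where $I-K$ is not boundedly invertible.

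The ingredient you are missing is the abstract two-sided Schatten-class comparison for extensions parametrized by contractions in a boundary triple: Proposition \ref{p:R-RinS} (i.e.\ \cite[Theorem 3.1]{GG91}) states that for two m-dissipative restrictions given by \eqref{e:K+IGa} with contractions $K_1,K_2$, the resolvent difference lies in $\Sf_p$ \emph{if and only if} $K_2-K_1\in\Sf_p$. The paper first restricts the m-boundary tuple to $\GG_{\ab,\beta}$ (Proposition \ref{p:reductionA}, so that both $\wh\A|_{\GG_{\ab,\beta}}$ and $\Bo^\Nr=\A^\Nr|_{\GG_{\ab,\beta}}$ are extensions of the \emph{same} symmetric operator $\Bo=\A|_{\GG_{\ab,\beta}}$), then applies Proposition \ref{p:R-RinS} with $K_1=-K$ and $K_2=I$, giving the difference $I+K$ in one line and in both directions; the compactness of the resolvent of $\Bo^\Nr$ (from \cite{L13}) then converts the statement about the resolvent difference into the statement about the resolvent itself. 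If you want to keep your hands-on route via \eqref{e:KFDtN}, you must supply the converse implication yourself, which essentially amounts to reproving that abstract result using the surjectivity of $\ga_\Dr(\la)$ onto $H^{1/2}(\pa\D)$ (equivalently, of $\Ga_0$ restricted to $\ker(\Ao^*-\la)$); as it stands, the converse is asserted but not proved.
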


This theorem is proved in Section \ref{s:ReducingA}.

\begin{rem} \label{r:DiscDet}
The spectral theorem for normal operators  (see, e.g., \cite{AG}) implies that a normal operator $T:\dom T \subseteq \Xf \to \Xf$ has a compact resolvent if and only if $\si (T) = \si_\disc (T)$. In particular, this statement is valid for selfadjoint operators. For a general (not necessarily normal) operator $T$, the resolvent compactness  implies $\si (T) = \si_\disc (T)$, but not vice versa.
\end{rem}

\begin{rem} \label{r:DiscDet0}
Theorem \ref{t:AM-disComp} is equivalent to its deterministic analogue (see  Theorem \ref{t:AM-disCompDet}). In particular, $ \{\om \in \Om : \text{the resolvent of $\wh \A|_{\GG_{\ab,\beta}}$ is compact} \} = \{\om \in \Om : K+I \in \Sf_\infty \} \in \Fc$.
If $K$ is a random-unitary operator, 
$
\{\om \in \Om: K+I \in \Sf_\infty \} = \{ \om \in \Om: \si (\wh \A|_{\GG_{\ab,\beta}}) = \si_\disc (\wh \A|_{\GG_{\ab,\beta}}) \} \in \Fc .
$
\end{rem}

\begin{cor} \label{c:<1}
$\PP \{ \text{the resolvent of $\wh \A|_{\GG_{\ab,\beta}}$ is compact} \} \le \PP \{\|K\| =1\}$
\end{cor}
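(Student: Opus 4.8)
The plan is to derive Corollary~\ref{c:<1} directly from Theorem~\ref{t:AM-disComp}(i) together with a simple deterministic observation about contractions $K$ with $K+I$ compact. First I would note that by Theorem~\ref{t:AM-disComp}(i) the event that the resolvent of $\wh\A|_{\GG_{\ab,\beta}}$ is compact coincides (up to a null set) with the event $\{K+I_{L^2(\pa\D)} \in \Sf_\infty(L^2(\pa\D))\}$, so it suffices to show the deterministic implication: if $K$ is a contraction in an infinite-dimensional Hilbert space $\Hs$ and $K+I_\Hs$ is compact, then $\|K\| = 1$. This reduces the probabilistic statement to a pointwise inclusion of events $\{K+I\in\Sf_\infty\}\subseteq\{\|K\|=1\}$, whence monotonicity of $\PP$ gives the claimed inequality.

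For the deterministic step, suppose toward a contradiction that $\|K\| = q < 1$. Then $-1 \notin \si(K)$ (indeed $\si(K)$ is contained in the closed disc of radius $q < 1$), so $K+I_\Hs$ is boundedly invertible: $(K+I_\Hs)^{-1} \in \Lc(\Hs)$. But a compact operator on an infinite-dimensional space cannot be invertible in $\Lc(\Hs)$ (its range is not closed, or equivalently $0 \in \si(K+I_\Hs)$ necessarily), contradicting compactness of $K+I_\Hs$. Hence $\|K\|=1$. Note that $\GG_{\ab,\beta}$ is infinite-dimensional and correspondingly $L^2(\pa\D)$ is infinite-dimensional, so this argument applies; strictly, what one uses is that $L^2(\pa\D)$ is infinite-dimensional, which holds since $\pa\D$ has positive surface measure.

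Putting the pieces together: on the event $E := \{K+I_{L^2(\pa\D)}\in\Sf_\infty(L^2(\pa\D))\}$ one has $\|K_\om\|=1$ by the deterministic step (applied $\om$-wise, valid since $K_\om$ is a contraction for a.a.\ $\om$), so $E \subseteq \{\|K\|=1\}$ modulo a null set. By Theorem~\ref{t:AM-disComp}(i), $\PP(E) = \PP\{\text{the resolvent of }\wh\A|_{\GG_{\ab,\beta}}\text{ is compact}\}$, and hence
\[
\PP\{\text{the resolvent of }\wh\A|_{\GG_{\ab,\beta}}\text{ is compact}\} = \PP(E) \le \PP\{\|K\|=1\},
\]
which is the assertion. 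I do not expect any genuine obstacle here; the only point requiring a little care is the measurability bookkeeping — that $E$ is indeed an event (already granted in the excerpt via Lemma~\ref{l:CompR}) and that $\{\|K\|=1\}$ is an event (which follows from \eqref{e:|T|}, since $\|K\|$ is a random variable) — so that the inequality $\PP(E)\le\PP\{\|K\|=1\}$ is meaningful and follows from monotonicity of the measure.
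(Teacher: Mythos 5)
Your proposal is correct and follows essentially the same route as the paper: both reduce, via Theorem \ref{t:AM-disComp}(i), to the deterministic observation that a contraction $K$ with $\|K\|<1$ has $K+I$ boundedly invertible and hence non-compact on the infinite-dimensional space $L^2(\pa\D)$. Your version merely spells out the contrapositive and the measurability bookkeeping in more detail than the paper's two-line argument.
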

\begin{proof}
If $\| K_\om \|<1$, then  $K_\om +I \in \Hom (L^2 (\pa \D))$ and $K_\om +I \not \in \Sf_\infty$. Theorem \ref{t:AM-disComp} (i) and Remark \ref{r:DiscDet} complete the proof.
\end{proof}

\begin{rem} \label{r:non-comp1}
Corollary \ref{c:<1} implies that, for an acoustic operator $\wh \A$ associated with a random contraction $K$ constructed as in  Example \ref{ex:quasi-uniform}, the resolvent of $\wh \A|_{\GG_{\ab,\beta}}$ is compact with probability 0. In the case of Example \ref{ex:K0+S2}, Theorem \ref{t:AM-disComp} (i) yields that the resolvent of $\wh \A|_{\GG_{\ab,\beta}}$ is a.s. compact only in the deterministic case where $K=\Kc_0 = -I$ for a.a. $\om \in \Om$. In the case of Example \ref{e:Gaussian0}, it follows from Theorem \ref{t:AM-disComp} (i) that the a.s. compactness of the resolvent of $\wh \A|_{\GG_{\ab,\beta}}$ is equivalent to  $\Kc_0 + I \in \Sf_\infty (L^2 (\pa \D))$.
\end{rem}

\begin{rem} \label{r:non-comp2}
Let $n \in \NN$ and $\wh \A^{[n]}$ be a random-selfadjoint acoustic operator associated with the random contraction $\Kc^{[n]} = U^{[n]} \oplus c I_{\Hs_n^\perp}$ of Example \ref{ex:RandomMatrix}, where $U^{[n]}$ a unitary circular ensemble. Then $\PP \{\si (\wh \A|_{\GG_{\ab,\beta}}) = \si_\disc (\wh \A|_{\GG_{\ab,\beta}})\} >0$  if and only if $c=-1$. If this is the case, then $\si (\wh \A|_{\GG_{\ab,\beta}}) = \si_\disc (\wh \A|_{\GG_{\ab,\beta}})$ with probability 1. This statements follow from Theorem \ref{t:AM-disComp}.
\end{rem}

\begin{rem} \label{r:ShiftedConvex}
Let $K = \Kc_0 + \sum_{j \in \NN} b_j \xi_j   D_j $ be a random contraction constructed as in Example \ref{ex:ncq-uD}. Assume additionally  that the directions $D_j $ admissible from $\Kc_0$ are  compact for all $j \in \NN$.
Let $\wh \A$ be the associated acoustic operator. Then Theorem \ref{t:AM-disComp} (i) implies that the resolvent of $\wh \A|_{\GG_{\ab,\beta}}$ is a.s. compact if and only if $\Kc_0 + I \in \Sf_\infty $.
\end{rem}

Summarizing the statements of Theorem \ref{t:AM-disComp} and Remarks \ref{r:non-comp1}-\ref{r:ShiftedConvex}, one infers that only random contractions $K$ built as compact perturbations of $\Kc_0 = - I_{L^2 (\pa \D)}$ lead to acoustic operators $\wh \A|_{\GG_{\ab,\beta}}$ with a.s. compact resolvents. That is why the case $\Kc_0 = - I_{L^2 (\pa \D)}$ in Example \ref{ex:ncq-uD} is special.

\begin{lem} \label{l:dir} 
An operator $D  \in \Lc (\Xf) $ is an admissible direction from $(- I)$  if and only if there exists a constant $c_1>0$ such that $c_1 (\im D)^2 \le \re D$. In particular,  every direction admissible from $(- I_\Xf)$   is an accretive operator.
\end{lem}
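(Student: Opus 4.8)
The plan is to translate the operator-norm condition defining admissibility into quadratic-form inequalities and then read off the statement by elementary manipulations with bounded nonnegative operators. By the definition in Example \ref{ex:ncq-uD}, $D$ is an admissible direction from $(-I)$ precisely when $\|cD-I\|\le 1$ for some $c>0$. My starting point is the observation that, for $c>0$, the inequality $\|cD-I\|\le 1$ is equivalent to $(cD-I)^*(cD-I)\le I$, which — upon expanding $(cD-I)^*(cD-I)=c^2D^*D-2c\re D+I$ — is in turn equivalent to $c\,D^*D\le 2\re D$. Since $\|cD-I\|=\|cD^*-I\|$, the same computation applied to $D^*$ shows that $\|cD-I\|\le 1$ is \emph{also} equivalent to $c\,DD^*\le 2\re D$. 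Thus admissibility from $(-I)$ amounts to the existence of $c>0$ with $c\,D^*D\le 2\re D$ and $c\,DD^*\le 2\re D$ simultaneously.

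The second ingredient is the identity $(\re D)^2+(\im D)^2=\tfrac12(D^*D+DD^*)$, obtained by expanding $\re D=\tfrac12(D+D^*)$ and $\im D=\tfrac1{2\ii}(D-D^*)$ and cancelling the $D^2$ and $(D^*)^2$ terms. Granting these two facts, the forward implication is immediate: if $D$ is admissible, adding the two inequalities above and inserting the identity gives $c\big((\re D)^2+(\im D)^2\big)\le 2\re D$, and discarding the nonnegative term $(\re D)^2$ leaves $c\,(\im D)^2\le 2\re D$, i.e. the claimed inequality with $c_1=c/2$. The accretivity assertion drops out as well, since $\re D\ge\tfrac c2 D^*D\ge 0$ forces $\re(Df|f)_\Xf=((\re D)f|f)_\Xf\ge 0$ for all $f\in\Xf$.

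For the converse, starting from $c_1(\im D)^2\le\re D$ with $c_1>0$, one first notes that $(\im D)^2\ge 0$ forces $\re D\ge 0$; being a bounded nonnegative self-adjoint operator, $\re D$ then satisfies $0\le\re D\le\|\re D\|\,I$, hence $(\re D)^2\le\|\re D\|\,\re D$. Feeding this and the hypothesis into the identity yields $D^*D\le D^*D+DD^*=2(\re D)^2+2(\im D)^2\le 2\big(\|\re D\|+c_1^{-1}\big)\re D$, so $c\,D^*D\le 2\re D$ for $c:=\big(\|\re D\|+c_1^{-1}\big)^{-1}>0$, which by the first paragraph means $\|cD-I\|\le 1$, i.e. $D$ is admissible from $(-I)$.

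I do not expect a genuine obstacle. The only point that could look delicate is that $D$ is not assumed normal, so $D^*D$ and $DD^*$ differ in general; but this is harmless, precisely because $\|cD-I\|=\|cD^*-I\|$ automatically controls both products, after which everything reduces to routine operator-monotonicity estimates for bounded nonnegative operators.
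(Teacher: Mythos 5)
Your proof is correct and follows essentially the same route as the paper: rewriting $\|cD-I\|\le 1$ as the form inequality $c\,D^*D\le 2\re D$, invoking the identity $\tfrac12(D^*D+DD^*)=(\re D)^2+(\im D)^2$, and using the spectral bound $(\re D)^2\le\|\re D\|\,\re D$. The only (harmless) difference is that you obtain the twin inequality $c\,DD^*\le 2\re D$ directly from $\|cD-I\|=\|cD^*-I\|$, whereas the paper routes this through the convexity of the unit ball of $\Lc(\Xf)$.
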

\begin{proof}
From the convexity of $\overline{\BB}_1 ( 0;\Lc (\Xf) )$, one gets that $\|c   D -  I\| \le 1$ holds for a certain  $c>0$ exactly when 
$\|s   D -  I\| \le 1$ for all $s \in (0,c]$, and exactly when $\|s   D^* -  I\| \le 1$ for all $s \in (0,c]$.
Further, $\|s   D -  I_\Xf\| \le 1$ for a certain $s>0$ is equivalent to 
$2 \re D - s D^* D  \ge 0$.  Since $D^* D \ge 0$, the admissibility of $D$ from $(- I)$ implies that $\re D \ge 0$, i.e., that $D$ is accretive.

Besides, $D  $ is an admissible direction from $(- I)$ if and only if $\re D - s (D^* D + D D^*) \ge 0$ for small enough $s>0$. The spectral decomposition of the bounded selfadjoint operator $\re D$ implies that $s (\re D)^2 \le \re D$ for small enough $s>0$.
Since $\frac 12 (D^* D + D D^*) = (\re D)^2+(\im D)^2$, we see that
$\re D - s (D^* D + D D^*) \ge 0$ is valid for small enough $s>0$ if and only if 
$\re D - s (\im D)^2 \ge 0 $ for small enough $s>0$. This completes the proof.
\end{proof}


Clearly, operators $e_j \otimes e_j$ are admissible direction from $(-1) I_{\Lc (\pa \D)}$. In the case  $j \neq k$,  Lemma \ref{l:dir} shows that $c (e_j \otimes e_k)$ is not an admissible direction for any $c \in \CC \setminus \{0\}$. In other words, if we want to build
$K$ as an infinite random matrix with independent random entries using Example \ref{ex:ncq-uD} 
and $D_j$ of the form $e_i \otimes e_j$, then 
only the  diagonal elements are allowed to be nonzero in this matrix. 
However, the diagonal operators $K = \sum_{j \in \NN} \xi_j (e_j \otimes e_j)$ with complex random variables $\xi_j$ are a.s. normal and can be studied without additional restrictions of Example \ref{ex:ncq-uD}.

\begin{ex}[diagonal random contractions] \label{ex:normal}
Let us fix an arbitrary deterministic orthonormal basis $\{e_j\}_{j \in \NN}$ in $L^2 (\pa \D)$.
Let $\xi_j $, $j \in \NN$, be complex random variables. First, we define
a randomized operator $\wt K$ by $\wt K \sum c_j e_j = \sum \xi_j c_j e_j$ on the set   $\dom \wt K$ of all finite linear combinations $\sum c_j e_j$ with coefficients $c_j \in \CC$.
It is clear that $\wt K$ can be extended to a random bounded operator $K$ if and only if $\xi := \{\xi_j\}_{j \in \NN}$ belongs to $ \ell^\infty (\NN)$ with probability 1, and that this $K$ is a random contraction if and only if $\|\xi  \|_{\ell^\infty} \le 1$ with probability 1.
Assume now that $\|\xi  \|_{\ell^\infty} \le 1$ a.s., and so, a random-m-dissipative acoustic operator $\wh \A$ is associated via Theorem \ref{t:randM-dis} with the random contraction $K$. 
Theorem \ref{t:AM-disComp} implies that
\begin{gather} \label{e:Pto1}
\text{the resolvent of $\wh \A|_{\GG_{\ab,\beta}}$ is compact with the probability $\textstyle \PP \{ \lim_{j\to\infty} \xi_j = -1 \}$. }
\end{gather}
\end{ex}

\begin{rem} \label{r:RandSA}
In Example \ref{ex:normal}, $K$ is a random-unitary operator if and only if distributions of all $\xi_j $ are be supported on the unit circle $\TT$. In this case,  $\wh \A$  is a random-selfajoint operator. Theorem \ref{t:AM-disComp} implies that $\PP \{ \si (\wh \A|_{\GG_{\ab,\beta}}) = \si_\disc (\wh \A|_{\GG_{\ab,\beta}}) \} = \PP \{ \lim_{j\to\infty} \xi_j = -1 \}$.
\end{rem}

If one takes all $\xi_j$ in  Example \ref{ex:normal} to be  independent and identically distributed (i.i.d.) with the uniform distribution in the disc $\{z \in \CC : |z| < 1\}$, the corresponding random contraction $K$ can be seen as one of possible substitutions to the non-existing uniform distribution on the ball $\overline{\BB}_1 (0; \Lc (L^2 (\pa \D)))$ of contraction. However, in this case, $\wh \A|_{\GG_{\ab,\beta}}$ has a compact resolvent with probability 0. This can seen from the following more general result.

\begin{cor} \label{c:normal0}
In Example \ref{ex:normal}, assume that $\xi_j$, $j\in \NN$, are 
i.i.d. random variables with the distribution supported in $\{z \in \CC : |z| \le 1\}$ such that $\PP \{\xi_1 = -1\} \neq 1$. Then the resolvent of $\wh \A|_{\GG_{\ab,\beta}}$  is a.s. not compact.
\end{cor}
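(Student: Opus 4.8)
The plan is to read the statement off from the characterization \eqref{e:Pto1} established in Example~\ref{ex:normal}: the resolvent of $\wh \A|_{\GG_{\ab,\beta}}$ is compact exactly on the event $\{\lim_{j\to\infty}\xi_j=-1\}$ (equivalently, on $\{K+I\in\Sf_\infty(L^2(\pa\D))\}$, cf.\ Remark~\ref{r:DiscDet0}), and this event is already known to be $\Fc$-measurable by Theorem~\ref{t:AM-disComp}. Hence it suffices to show that, under the i.i.d.\ hypothesis together with $\PP\{\xi_1=-1\}\neq1$, one has $\PP\{\lim_{j\to\infty}\xi_j=-1\}=0$.

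First I would fix a useful threshold. Since $\PP\{\xi_1=-1\}\neq1$ we have $\PP\{\xi_1\neq-1\}>0$, and because $\{\xi_1\neq-1\}=\bigcup_{n\in\NN}\{|\xi_1+1|\ge 1/n\}$ is an increasing union, continuity of $\PP$ from below produces some $\vep>0$ with $p:=\PP\{|\xi_1+1|\ge\vep\}>0$. Then I would invoke the second Borel--Cantelli lemma: by the i.i.d.\ assumption the events $A_j:=\{|\xi_j+1|\ge\vep\}$, $j\in\NN$, are independent with $\PP(A_j)=p$, so $\sum_{j\in\NN}\PP(A_j)=\infty$ and therefore $\PP\{A_j \text{ occurs infinitely often}\}=1$. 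But $\{\lim_{j\to\infty}\xi_j=-1\}$ is contained in the complement of $\{A_j \text{ occurs infinitely often}\}$, since convergence to $-1$ forces $|\xi_j+1|<\vep$ for all large $j$; hence $\PP\{\lim_{j\to\infty}\xi_j=-1\}=0$. Combining this with \eqref{e:Pto1} gives that the resolvent of $\wh \A|_{\GG_{\ab,\beta}}$ is a.s.\ not compact.

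I do not expect a genuine obstacle here. The only points that need care are the measurability bookkeeping—namely, that $\{K+I\in\Sf_\infty\}$ is an event whose probability equals $\PP\{\lim_j\xi_j=-1\}$—but this is exactly what Theorem~\ref{t:AM-disComp} and Remark~\ref{r:DiscDet0} supply, and the routine independence input for Borel--Cantelli. As an alternative to Borel--Cantelli one may note that $\{\lim_j\xi_j=-1\}$ is a tail event, so Kolmogorov's $0$--$1$ law gives it probability $0$ or $1$; probability $1$ would imply $\xi_j-\xi_{j+1}\to0$ a.s., forcing the common law of $\xi_1$ to be the Dirac mass at $-1$, contradicting $\PP\{\xi_1=-1\}\neq1$. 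Either route closes the argument.
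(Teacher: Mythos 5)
Your proof is correct and follows essentially the same route as the paper: both arguments reduce, via \eqref{e:Pto1}, to showing $\PP\{\lim_{j\to\infty}\xi_j=-1\}=0$, and both get this from independence --- the paper by directly computing $\PP\{|\xi_j+1|\le\de\ \forall j\ge n\}=\prod_{j\ge n}c=0$ for a $\de$ with $c=\PP\{|\xi_1+1|\le\de\}<1$, you by the second Borel--Cantelli lemma, which is the same computation in packaged form. No gaps.
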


\begin{proof}
There exists $\de>0$, such that $\PP \{|\xi_1+1| \le \de\} = c <1 $. So, for each $n \in \NN$, $P_n := \PP \{|\xi_j+1| \le \de \ \forall j \ge n \} = \prod_{j=n}^\infty c = 0$. Thus, 
$\PP \{ \lim_{j\to\infty} \xi_j = -1 \} \le \lim_{n \to \infty} P_n = 0$. Statement \eqref{e:Pto1} completes the proof.
\end{proof}

The following result is a preparation for Section \ref{s:RImp}, where a connection with the Weyl's asymptotic law for eigenvalues of $\De^\paD$ is discussed.

\begin{cor} \label{c:normal}
In Example \ref{ex:normal}, let $\xi_j$, $j\in \NN$, be
independent random variables with distributions supported in $\{z \in \CC : |z| \le 1\}$. Consider the event 
\[
\text{$\Om_\comp := \{ \wh \A|_{\GG_{\ab,\beta}} \text{ has a compact resolvent}\}$ \ (see Remark \ref{r:DiscDet0}).}
\]
 Then $\PP (\Om_\comp)$   is either 0, or 1. Besides, $\PP (\Om_\comp) =1$ if and only if for every $\de>0$ there exists  $N=N(\de) \in \NN$ such that $0<\prod_{j=N}^{\infty} \PP \{|\xi_j +1| < \de\}$. 
\end{cor}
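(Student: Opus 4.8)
The plan is to reduce the statement to an elementary Borel--Cantelli-type argument about the sequence $\{\xi_j\}_{j\in\NN}$, using Example~\ref{ex:normal}, specifically \eqref{e:Pto1}: the resolvent of $\wh\A|_{\GG_{\ab,\beta}}$ is compact precisely on the event $\Om_\comp = \{\lim_{j\to\infty}\xi_j = -1\}$ (up to a null set; this is also where Remark~\ref{r:DiscDet0} is invoked to know $\Om_\comp\in\Fc$). So I only need to analyze $\PP\{\lim_{j\to\infty}\xi_j=-1\}$ for a sequence of \emph{independent} (not necessarily identically distributed) complex random variables $\xi_j$ supported in the closed unit disc.

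First I would establish the $0$--$1$ dichotomy. The event $\Om_\comp = \{\xi_j\to -1\}$ is a tail event for the sequence $\{\xi_j\}$: changing finitely many $\xi_j$ does not affect convergence of the sequence. Since the $\xi_j$ are independent, Kolmogorov's zero--one law gives $\PP(\Om_\comp)\in\{0,1\}$.

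Next I would derive the quantitative criterion. Write, for $\de>0$, $E_j(\de) := \{|\xi_j+1|<\de\}$. Since $\xi_j\to -1$ means: for every $\de>0$ the events $E_j(\de)$ hold for all sufficiently large $j$, we have the set identity
\[
\Om_\comp = \bigcap_{\de>0}\ \bigcup_{N\in\NN}\ \bigcap_{j\ge N} E_j(\de)
= \bigcap_{m\in\NN}\ \bigcup_{N\in\NN}\ \bigcap_{j\ge N} E_j(1/m),
\]
the second equality because it suffices to let $\de$ run through $1/m$, $m\in\NN$. Fix $\de>0$ and set $F_N(\de) := \bigcap_{j\ge N}E_j(\de)$; by independence, $\PP(F_N(\de)) = \prod_{j=N}^{\infty}\PP(E_j(\de))$ (an infinite product of numbers in $[0,1]$, interpreted as the decreasing limit of the finite partial products). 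The sets $F_N(\de)$ increase in $N$, so $\PP\big(\bigcup_N F_N(\de)\big) = \lim_{N\to\infty}\PP(F_N(\de)) = \sup_N \prod_{j=N}^{\infty}\PP(E_j(\de))$. Hence $\PP\big(\bigcup_N F_N(\de)\big)>0$ if and only if $\prod_{j=N}^{\infty}\PP\{|\xi_j+1|<\de\}>0$ for some $N=N(\de)$. Now, if $\PP(\Om_\comp)=1$ then for every $\de>0$ the larger set $\bigcup_N F_N(\de)$ also has probability $1>0$, giving the stated condition. Conversely, if for every $\de>0$ there is $N(\de)$ with $\prod_{j=N(\de)}^{\infty}\PP(E_j(\de))>0$, then each $\bigcup_N F_N(\de)$ has positive probability; but by the already-established dichotomy applied to the tail event $\bigcup_N F_N(1/m)$ (again a tail event, independence giving a $0$--$1$ value), each has probability exactly $1$, so their countable intersection $\Om_\comp$ has probability $1$. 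Combining with \eqref{e:Pto1} finishes the proof.

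The only mild subtlety — and the step I would be most careful about — is the passage between the finite-product formula and the infinite product, i.e.\ justifying $\PP(\bigcap_{j\ge N}E_j(\de)) = \prod_{j\ge N}\PP(E_j(\de))$ by continuity from above of $\PP$ together with independence of the $E_j$, and checking that "$\prod_{j=N}^{\infty}>0$ for some $N$" is genuinely equivalent to "$\bigcup_N F_N(\de)$ has positive probability" rather than just implied by it (it is, since the partial unions are increasing). Everything else is routine measure theory once \eqref{e:Pto1} and the zero--one law are in hand; no new analytic input about the acoustic operator is needed here.
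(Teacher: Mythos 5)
Your proposal is correct and follows essentially the same route as the paper: reduce via \eqref{e:Pto1} to the event $\{\lim_j \xi_j = -1\}$, apply Kolmogorov's zero--one law for the dichotomy, and use independence to express $\PP\bigl(\bigcap_{j\ge N}\{|\xi_j+1|<\de\}\bigr)$ as the infinite product (the paper phrases this through the random variable $\wt n(\de)=\sup\{j:|\xi_j+1|\ge\de\}$ rather than the events $F_N(\de)$, but the set identity and the limiting argument are identical). Your extra care in upgrading positive probability of the tail event $\bigcup_N F_N(1/m)$ to probability $1$ via a second application of the zero--one law is a detail the paper leaves implicit, not a different method.
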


\begin{proof} Combining  \eqref{e:Pto1} with the  Kolmogorov's zero-one law, one sees  that $\PP (\Om_\comp)$ is either 0, or 1. 
 For every $\de>0$, let us consider the $[-\infty, +\infty]$-valued random variable 
 \[
 \wt n (\de) := \sup \{j \in \NN: |\xi_j +1| \ge \de\}, \text{ where $\sup \emptyset := - \infty$}.
 \]
 Since $\PP \{\wt n (\de) < n \} = \prod_{j=n}^{\infty} \PP \{|\xi_j +1| < \de\} $,
we see that $\PP \{\wt n (\de) < +\infty\} = 1$ if and only if 
$0 < \prod_{j=n}^{\infty} \PP \{|\xi_j +1| < \de\} $
 for $n$ larger or equal than a certain $N=N(\de) \in \NN$. By \eqref{e:Pto1} and Remark \ref{r:DiscDet0}, $\Om_\comp  = \bigcap_{\de >0} 
\{\wt n (\de) < +\infty\}$. This completes the proof.
\end{proof}

\subsection{Absorbing boundary condition for elliptic second order operators}
\label{s:El}

Recall that the uniformly positive matrix-functions  $\al$ and $\al^{-1}$ belong to $L^\infty (\D, \RR^{d\times d}_\sym)$.
We consider in the Sobolev space $H^1 (\D) $ the semi-norm $\| u \|_{1,\ab^{-1}} = (\ab^{-1} \nabla u|\nabla u)_{L^2 (\D)}^{1/2}$ and define the Hilbert space $(H_{1,\ab^{-1}} (\D), \| \cdot \|_{1,\al^{-1}}) $ as the result of the factorization of the semi-Hilbert space $(H^1 (\D), \| \cdot \|_{1,\al^{-1}} )$ w.r.t. the 1-dimensional subspace $\{c \one : c \in \CC\}$ of constant functions. 

The restriction  of operators $\A$, $\wh \A$, and $\A^*$ to the subspace $\GG_{\ab,\beta}$ considered above (see  Lemma  \ref{l:AcReducing}) allows one to the transform the 1st order acoustic equation 
$ \ii \pa_t \Psi   = \af_{\ab,\beta} \Psi $ into its 2nd order version
\begin{gather} \label{e:Ac2or}
\ii \pa_t \Phi = \bfr_{\ab,\beta} \Phi, \quad \text{ with  } \quad \bfr_{\ab,\beta} : \begin{pmatrix} u  \\ p   \end{pmatrix} \mapsto \ii \begin{pmatrix} 0 & I  \\
\beta^{-1}  \Div  \ab^{-1}  \gradm & 0 \end{pmatrix} \begin{pmatrix} u  \\ p   \end{pmatrix},
\end{gather} 
where the evolution of $\Phi=\{u,p\}$ is considered in the space $H_{1,\ab^{-1}} (\D) \oplus L^2_{\beta} (\D)$.

Let the operator $\gradm_1: u \mapsto  \nabla u$ be defined on the  space $H_{1,\ab^{-1}} (\D)$. The definition of the  norm in $H_{1,\ab^{-1}} (\D)$ implies that the map 
\begin{equation} \label{e:aGrad1}
\text{
$\ab^{-1} \gradm_1 : u \mapsto  \ab^{-1} \nabla u$  is a unitary operator}
 \text{  from $H_{1,\ab^{-1}} (\D)$ to 
$\ab^{-1} \gradm H^1 (\D)$,}
\end{equation}
where $\ab^{-1} \gradm H^1 (\D) = \LL^2_{\ab} (\D) \ominus  \HH_0 (\Div 0,\D) $ is perceived as a closed subspace of $\LL^2_{\ab} (\D)$. 
Note $ u $ in \eqref{e:Ac2or} is the scalar potential of $\ab \vbf$ in the sense $\ab \vbf = - \nabla u $.

\begin{lem} \label{l:U}
For the part $ \A|_{\GG_{\ab,\beta}}$ of the symmetric acoustic operator $\A$, and for its adjoint 
$ \A^*|_{\GG_{\ab,\beta}}$, the following equalities hold 
\begin{align}
& \Uc^{-1} ( \A|_{\GG_{\ab,\beta}}) \Uc = \Bc  \qquad \text{ with } \qquad  \Bc = \ii
\begin{pmatrix} 0 & \Ic_0  \\
\beta^{-1} \Div_0  \ab^{-1} \gradm_1  & 0  \end{pmatrix} , \label{e:B}
\\
\text{and } \quad & \Uc^{-1} (\A^*|_{\GG_{\ab,\beta}}) \Uc = \Bc^* = \ii 
\begin{pmatrix} 0 & \Ic_1  \\
\beta^{-1} \Div  \ab^{-1} \gradm_1  & 0  \end{pmatrix} \label{e:B*}
\end{align}
where 
$\Uc:  \begin{pmatrix} u  \\ p   \end{pmatrix} \mapsto \begin{pmatrix} -\ab^{-1} \nabla u \\ p   \end{pmatrix} $ 
is a unitary operator from $H_{1,\ab^{-1}} (\D) \oplus L^2_{\beta} (\D)$ onto $\GG_{\ab,\beta} $,
the domains of $\Bc$ 
and $\Bc^*$ are given by 
\begin{gather*}
\dom \Bc =\{u \in H_{1,\ab^{-1}} (\D) : \ab^{-1} \nabla u \in \HH_0 (\Div,\D) \} \times  H^1_0 (\D),
\\
 \dom \Bc^* =\{u \in H_{1,\ab^{-1}} (\D) : \ab^{-1} \nabla u \in \HH (\Div,\D) \} \times  H^1 (\D) ,
\end{gather*}
while the operators $\Ic_0: H^1_0 (\D) \subset L^2_{\beta} (\D) \to H_{1,\ab^{-1}} (\D) $ and 
$\Ic_1: H^1 (\D) \subset L^2_{\beta} (\D) \to H_{1,\ab^{-1}} (\D) $ are  the identification operators that map $p \in \dom (\Ic_j) \subseteq H^1 (\D)$, $j=0,1$, to the corresponding equivalence class $ \{p +c \one : c \in \CC\}$ in $H_{1,\ab^{-1}} (\D)$.
\end{lem}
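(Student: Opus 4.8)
\textbf{Proof plan for Lemma \ref{l:U}.}

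The plan is to verify directly that the prescribed map $\Uc$ is unitary and then to compute the conjugated operators by transporting the action of $\A|_{\GG_{\ab,\beta}}$ and $\A^*|_{\GG_{\ab,\beta}}$ through $\Uc$ componentwise. First I would observe that $\Uc$ factors as $\Uc = (\ab^{-1}\gradm_1) \oplus I_{L^2_\beta(\D)}$ up to the sign in the first component; since $\ab^{-1}\gradm_1$ is unitary from $H_{1,\ab^{-1}}(\D)$ onto $\ab^{-1}\gradm H^1(\D) = \LL^2_\ab(\D)\ominus\HH_0(\Div 0,\D)$ by \eqref{e:aGrad1}, and the second component is the identity, the orthogonal sum $\Uc$ is unitary from $H_{1,\ab^{-1}}(\D)\oplus L^2_\beta(\D)$ onto $(\LL^2_\ab(\D)\ominus\HH_0(\Div 0,\D))\oplus L^2_\beta(\D) = \GG_{\ab,\beta}$, using the decomposition $\GG_{\ab,\beta} = \ab^{-1}\gradm H^1(\D)\oplus L^2_\beta(\D)$ from \eqref{e:LabDec}.

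Next I would identify the domains. Recall $\dom \A = \HH_0(\Div,\D)\times H^1_0(\D)$ and $\dom \A^* = \HH(\Div,\D)\times H^1(\D)$ from \eqref{e:A} and the text after it. Since $\HH_0(\Div 0,\D) = \ker\A$ is the reducing subspace killed by the decomposition (Lemma \ref{l:AcReducing}(i)), an element $\{\vbf,p\}$ lies in $\dom(\A|_{\GG_{\ab,\beta}})$ precisely when its first component $\vbf$ lies in $\HH_0(\Div,\D)\cap(\ab^{-1}\gradm H^1(\D))$, i.e.\ $\vbf = -\ab^{-1}\nabla u$ for some $u\in H_{1,\ab^{-1}}(\D)$ with $\ab^{-1}\nabla u \in \HH_0(\Div,\D)$, and $p\in H^1_0(\D)$; applying $\Uc^{-1}$ transports this to exactly the stated $\dom\Bc$. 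The analogous bookkeeping with $\HH(\Div,\D)$ and $H^1(\D)$ gives $\dom\Bc^*$. The identification operators $\Ic_0,\Ic_1$ appear simply because the second component of $\Uc^{-1}$ sends $p\in H^1(\D)$ (or $H^1_0(\D)$) to its equivalence class in $H_{1,\ab^{-1}}(\D)$ when that class is needed as the first input of the next operator.

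Then I would compute $\Uc^{-1}(\A|_{\GG_{\ab,\beta}})\Uc$ on $\{u,p\}\in\dom\Bc$. Applying $\Uc$ gives $\{-\ab^{-1}\nabla u,\ p\}\in\GG_{\ab,\beta}$; applying $\A$ as in \eqref{e:A} gives $\frac1\ii\{\ab^{-1}\gradn p,\ \beta^{-1}\Divn(-\ab^{-1}\nabla u)\} = \ii\{-\ab^{-1}\nabla p,\ \beta^{-1}\Div_0\ab^{-1}\nabla u\}$ (using $\gradn = -\gradm\uph_{H^1_0}$ and $\Divn = \gradm^*$ on the respective domains, and $\frac1\ii = -\ii$); applying $\Uc^{-1}$ sends the first component $-\ab^{-1}\nabla p$ back to the class of $p$, yielding $\ii\{\Ic_0 p,\ \beta^{-1}\Div_0\ab^{-1}\gradm_1 u\}$, which is exactly $\Bc\{u,p\}$ as in \eqref{e:B}. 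The computation for $\A^*|_{\GG_{\ab,\beta}}$ is identical except that $\gradn,\Divn$ are replaced by $\gradm,\Div$ on the wider domains, and one uses Lemma \ref{l:AcReducing}(i) that the decomposition reduces $\A^*$ to $0\oplus(\A|_{\GG_{\ab,\beta}})^*$, so that the computed operator is indeed $\Bc^*$; alternatively one checks directly that $\Bc$ and the operator in \eqref{e:B*} are mutually adjoint via the integration-by-parts (Green) formula for $\Div$ and $\gradm$, which is where the distinction between the homogeneous and inhomogeneous trace conditions enters. The main obstacle is purely notational rather than conceptual: keeping the two identification operators $\Ic_0,\Ic_1$, the sign conventions ($\af_{\ab,\beta}$ carries a $\frac1\ii$, and $\Uc$ carries a minus sign), and the distinction between $\gradm/\Div$ on full domains versus $\gradn/\Divn = \pm$ adjoints on the zero-trace domains all consistent, so that the claimed domains and the claimed adjointness $\Bc^* = (\Bc)^*$ come out correctly; the underlying analytic input (the unitarity \eqref{e:aGrad1}, the decomposition \eqref{e:LabDec}, and the reduction in Lemma \ref{l:AcReducing}) is already available.
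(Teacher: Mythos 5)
Your proposal is correct and follows essentially the same route as the paper: unitarity of $\Uc$ from \eqref{e:aGrad1} together with the decomposition \eqref{e:LabDec}, the block-matrix form of $\Bc$ by direct componentwise computation, and \eqref{e:B*} via the identity $\A^*|_{\GG_{\ab,\beta}} = (\A|_{\GG_{\ab,\beta}})^*$ from Lemma \ref{l:AcReducing}(i). Your version is just a more explicit write-up of the paper's (very terse) argument, with the sign and domain bookkeeping carried out correctly.
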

\begin{proof}
The fact that $\Uc$ is a unitary operator follows from \eqref{e:aGrad1}. The operator $\Bc$ is symmetric since $ \A|_{\GG_{\ab,\beta}} $ is symmetric. The block-matrix expression for $\Bc$ in \eqref{e:B} can obtained by direct multiplication of operators. Formula \eqref{e:B*} follows from $\A^*|_{\GG_{\ab,\beta}} = (\A|_{\GG_{\ab,\beta}})^*$.
\end{proof}

In particular, $\Bc$ is a closed symmetric operator in $H_{1,\ab^{-1}} (\D) \oplus L^2_{\beta} (\D)$.


\begin{thm} \label{t:BM-dis} 
Let $ V \in \Hom \left( L^2 (\pa \D) ,  H^{1/2} (\pa \D) \right)$ be a certain deterministic linear homeomorphism  (as in Theorem \ref{t:AM-dis}  or Lemma \ref{l:VV*}).
An operator $\wh \Bc$ is an m-dissipative extension of the symmetric operator $\Bc$ if and only if 
there exists a contraction $K$ in $L^2 (\pa \D)$ such that $\wh \Bc $ is the restriction of 
$\Bc^*$ associated with the boundary condition 
 \begin{gather} \label{e:mdisBCQ}
(K+I_{L^2 (\pa \D)}) V^{-1} \ga_0 (p)  - (K-I_{L^2 (\pa \D)}) V^\cross \gan (\ab^{-1} \nabla u) = 0 .
\end{gather}
This equivalence establishes a 1-to-1 correspondence between 
contractions $K$ in  $L^2 (\pa \D)$ and m-dissipative extensions of $\Bc$. Moreover, $\wh \Bc = \wh \Bc^*$ if and only if $K$ is a unitary operator.
\end{thm}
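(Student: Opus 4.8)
The plan is to transport Theorem~\ref{t:AM-dis} along the two identifications furnished by Lemmas~\ref{l:AcReducing} and~\ref{l:U}. First, by Lemma~\ref{l:AcReducing}(i) every m-dissipative extension $\wh\A$ of the symmetric acoustic operator $\A$ reduces with respect to the orthogonal decomposition $\LL^2_{\ab,\beta}(\D)=\HH_0(\Div 0,\D)\oplus\GG_{\ab,\beta}$, so $\wh\A=0\oplus\wh\A|_{\GG_{\ab,\beta}}$ with $\wh\A|_{\GG_{\ab,\beta}}$ m-dissipative in $\GG_{\ab,\beta}$; conversely, if $\wt\Bc$ is any m-dissipative operator in $\GG_{\ab,\beta}$ extending $\A|_{\GG_{\ab,\beta}}$, then $0\oplus\wt\Bc$ is m-dissipative in $\LL^2_{\ab,\beta}(\D)$ (the resolvent of an orthogonal sum is the orthogonal sum of resolvents, with the relevant norm bound the maximum of the two, and $0$ is m-dissipative) and extends $\A$. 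Hence $\wh\A\mapsto\wh\A|_{\GG_{\ab,\beta}}$ is a bijection between the m-dissipative extensions of $\A$ and those of $\A|_{\GG_{\ab,\beta}}$, and it preserves selfadjointness because the zero summand acting in $\HH_0(\Div 0,\D)$ is selfadjoint.

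Next I conjugate by the unitary $\Uc$ of Lemma~\ref{l:U}. Unitary conjugation preserves m-dissipativity, the extension ordering, and selfadjointness, and by \eqref{e:B} and \eqref{e:B*} it sends $\A|_{\GG_{\ab,\beta}}$ to $\Bc$ and $\A^*|_{\GG_{\ab,\beta}}$ to $\Bc^*$ (domains included). Therefore $\wt\Bc\mapsto\Uc^{-1}\wt\Bc\,\Uc$ is a selfadjointness-preserving bijection from m-dissipative extensions of $\A|_{\GG_{\ab,\beta}}$ onto m-dissipative extensions of $\Bc$. Composing the two bijections and invoking Theorem~\ref{t:AM-dis}, the m-dissipative extensions $\wh\Bc$ of $\Bc$ are in bijection with the contractions $K$ in $L^2(\pa\D)$, with $\wh\Bc=\wh\Bc^*$ corresponding exactly to $K$ unitary.

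It then remains to identify the boundary condition attached to a given $K$. Let $\wh\A$ be the acoustic operator of Theorem~\ref{t:AM-dis}, i.e. the restriction of $\A^*$ to the $\Psi=\{\vbf,p\}\in\HH(\Div,\D)\times H^1(\D)$ satisfying \eqref{e:mdisBC}. Then the corresponding $\wh\Bc=\Uc^{-1}(\wh\A|_{\GG_{\ab,\beta}})\Uc$ is the restriction of $\Bc^*$ to those $\Phi=\{u,p\}\in\dom\Bc^*$ for which $\Uc\Phi=\{-\ab^{-1}\nabla u,\,p\}$ lies in $\dom\wh\A$. Since $\Uc(\dom\Bc^*)$ equals $\dom\A^*\cap\GG_{\ab,\beta}$, this last condition reduces to \eqref{e:mdisBC} with $\vbf$ replaced by $-\ab^{-1}\nabla u$, and by linearity of $\ga_0$ and $\gan$ this is precisely $(K+I)V^{-1}\ga_0(p)-(K-I)V^\cross\gan(\ab^{-1}\nabla u)=0$, i.e. \eqref{e:mdisBCQ}. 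The selfadjointness clause follows by chasing $\wh\Bc=\wh\Bc^*$ if and only if $\wh\A=\wh\A^*$ if and only if $K$ is unitary through the same bijections.

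I expect the only genuine bookkeeping to be in this last step: verifying that $\Uc$ carries $\dom\Bc^*$ onto $\dom\A^*\cap\GG_{\ab,\beta}$ so that the boundary condition written for $\{u,p\}$ cuts out under $\Uc$ exactly the same subspace as \eqref{e:mdisBC} written for $\{\vbf,p\}$ — in particular that the constant-function factorization defining $H_{1,\ab^{-1}}(\D)$ is harmless here, which it is because $\ga_0$ is applied to $p$ while $\gan(\ab^{-1}\nabla(\cdot))$ annihilates constants. Everything else is the formal transfer of Theorem~\ref{t:AM-dis}.
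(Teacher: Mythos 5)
Your argument is correct and follows essentially the same route as the paper: reduce by $\HH_0 (\Div 0,\D)$, conjugate by the unitary $\Uc$ of Lemma \ref{l:U}, and transfer the parametrization of Theorem \ref{t:AM-dis}, with the sign change in \eqref{e:mdisBCQ} coming from $\vbf = -\ab^{-1}\nabla u$. The only cosmetic difference is that you justify the bijection between m-dissipative extensions of $\A$ and of $\A|_{\GG_{\ab,\beta}}$ directly via orthogonal sums of resolvents, whereas the paper routes it through Proposition \ref{p:reductionA} and Remark \ref{r:BCforAB}, which show that the restricted trace maps form an m-boundary tuple for $\A^*|_{\GG_{\ab,\beta}}$ and then apply the abstract Proposition \ref{p:absM-dis} there.
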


The proof of Theorem \ref{t:BM-dis} is given Section \ref{s:ReducingA}. This  proof, roughly speaking, states that the passage form the symmetric acoustic operator $\A$ to its part $ \A|_{\GG_{\ab,\beta}} $ does not diminish the diversity of m-dissipative extensions.

\begin{rem} \label{r:Q}
In the case of a deterministic contraction $K$,
Theorem \ref{t:BM-dis} defines the 2nd order  m-dissipative acoustic operator $\wh \Bc$ associated with  the boundary condition \eqref{e:mdisBCQ} (and with the differential operation $\bfr_{\ab,\beta}$). Lemma \ref{l:U} implies that $\wh \Bc$ is unitary equivalent to the part 
$\wh \A|_{\GG_{\ab,\beta}} $ of the 1st order acoustic operator $ \wh \A$ associated with the boundary condition \eqref{e:mdisBC}. Hence $\wh \Bc$ and $\wh \A|_{\GG_{\ab,\beta}} $ have the same spectral properties. Thus, $\wh \Bc$ has a compact resolvent if and only if 
$K+I \in \Sf_\infty$. If $K$ is a random contraction,  $\wh \Bc $  is a random-m-dissipative operator in $H_{1,\ab^{-1}} (\D) \oplus L^2_{\beta} (\D)$ and the probability that $\wh \Bc$ has a compact resolvent equals $\PP \{ K+I \in \Sf_\infty \}$.
\end{rem}

\subsection{Random impedance operators and Weyl's law for $\De^\paD$}
\label{s:RImp}

In this subsection we consider a particularly convenient randomization of generalized im\-pe\-dance boundary conditions. 
We start from several general statements.

\begin{rem}\label{r:Qimp}
Similarly to Remark \ref{r:Q}, we can adapt generalized impedance boundary conditions to the 2nd order operators $\wh \Bc$. 
Namely, let $\wh \A$ be a 1st order acoustic operator of Section \ref{s:DisBC} associated with  
a generalized impedance boundary condition $\Zc \ga_0 (p) = \gan (\vbf)$,
where $\Zc:\dom \Zc \subseteq H^{1/2} (\pa \D) \to H^{-1/2} (\pa \D)$ is an impedance operator. 
Let $\wh \Bc$ be the 2nd order acoustic operator associated with 
the modified boundary condition
\begin{gather} \label{e:QImp}
\Zc \ga_0 (p) = - \gan (\ab^{-1} \nabla u).
\end{gather}
Analogously to Definition \ref{d:bc}, this means that $\wh \Bc$ is the restriction of the operator $\Bc^*$ of \eqref{e:B*} to $\dom \wh \Bc = \{\{u,p\} \in \dom \Bc^* : \Zc \ga_0 (p) = - \gan (\ab^{-1} \nabla u)\}$.
Then  Lemma \ref{l:U} and the decompositions \eqref{e:La}-\eqref{e:LabDec} imply that 
$\wh \A|_{\GG_{\ab,\beta}} $ and $\wh \Bc$ are unitary equivalent.
Hence the results for deterministic or stochastic operators $\wh \Bc$ imply similar results for $\wh \A|_{\GG_{\ab,\beta}} $, and vice versa.
\end{rem}

\begin{rem} \label{r:Qimp2} 
In particular, an acoustic operator  $\wh \Bc$ associated with a certain impedance operator $\Zc:\dom \Zc \subseteq H^{1/2} (\pa \D) \to H^{-1/2} (\pa \D)$ via a boundary condition \eqref{e:QImp} is m-dissipative if and only if  $\Zc$  is closed and maximal accretive. Moreover, $\wh \Bc = (\wh \Bc)^*$ if and only if $\Zc^\cross = - \Zc^\cross$. These statements follow from Remark \ref{r:Qimp}, Lemma \ref{l:AcReducing}, and Theorem \ref{t:Imp}.
\end{rem}

The distribution of the random contraction  $K$ in Example \ref{ex:normal} depends on the choice of the deterministic basis $\{e_j\}_{j \in \NN}$. Bases connected with the Laplace-Beltrami operator $\De^\paD$ are especially convenient if they are combined with the homeomorphism $V$ of Lemma \ref{l:VV*}. 


Let $\{y_j\}_{j \in \NN}$ be an orthonormal basis in $L^2 (\pa \D)$ composed of eigenfunctions of the nonnegative Laplace-Beltrami operator $\De^\paD$ in such a way that $\De^\paD y_j =  \mu_j y_j $ and the corresponding eigenvalues $\{\mu_j\}_{j \in \NN}$ form a non-decreasing sequence. Note that $0 = \mu_1 \le \mu_j $ for all $j$, $\mu_j \to +\infty$ as $j\to +\infty$, and 
$\{(1+\mu_j)^{s/2} y_j\}_{j \in \NN}$ is a Riesz basis in $H^s (\pa \D)$ for every $s \in [-1,1]$,
see \cite{GMMM11}.

Let $\zeta=\{\zeta_j\}_{j \in \NN}$ be a sequence consisting of complex random variables $\zeta_j$. 
On the set $\dom \Zc^\fin$ of all finite linear combinations $\sum c_j y_j$ with coefficients $c_j \in \CC$, we define  a randomized operator $\Zc^\fin:\om \mapsto \Zc^\fin_\om$, 
\ $\Zc^\fin_\om:\dom \Zc^\fin \subset H^{1/2} (\pa \D) \to H^{-1/2} (\pa \D)$, by 
\begin{gather}
\text{$\Zc_\fin \sum c_j y_j = \sum \zeta_j c_j  y_j  $,   \qquad 
and put $\Zc^{\diag(\zeta)} := \overline{\Zc^\fin}$},
\label{e:Zfin}
\end{gather}
where $\overline{\Zc^\fin_\om}$  is the closure of a (possibly unbounded) operator 
$\Zc^\fin_\om$ as an operator from $H^{1/2} (\pa \D)$ to $ H^{-1/2} (\pa \D)$. Roughly speaking the a.s. closed operator $\Zc^{\diag(\zeta)}$ 
has  a diagonal matrix with the random entries $\zeta_j$ in the orthogonal basis $\{y_j\}_{j\in \NN}$.
Note that its representatives $\Zc^{\diag(\zeta)}_\om$ are (possibly unbounded) operators from $H^{1/2} (\pa \D)$ to  $ H^{-1/2} (\pa \D)$, and that in each of these spaces  the basis $\{y_j\}_{j\in \NN}$ is  orthogonal, but is not orthonormal.

Let $\wh \Bc^{\diag(\zeta)}$ be the 2nd order  acoustic operator associated with the modified boundary condition  
$\Zc^{\diag(\zeta)} \ga_0 (p) = - \gan (\ab^{-1} \nabla u)$.
It is easy to see from Theorem \ref{t:Imp} and Remark \ref{r:Qimp2}, that $\wh \Bc^{\diag(\zeta)}$  is a random-m-dissipative operator if and only if
\begin{gather} \label{e:diagMdis}
\text{ $\zeta_j \in  \overline{\CC}_\rr$ a.s. for all $j \in \NN$, where $ \overline{\CC}_\rr := \{z \in \CC : \re z \ge 0\}$.}
\end{gather}
 Moreover, $\wh \Bc^{\diag(\zeta)}$ is a 
random-selfadjoint operator if and only if $\re \zeta_j =0$ a.s. for all $j \in \NN$.

\begin{thm}[diagonal impedance operators] \label{t:Diag}
Assume that \eqref{e:diagMdis} holds true. Consider the random-m-dissipative acoustic operator $\wh \Bc^{\diag(\zeta)}$ associated with the boundary condition $\Zc^{\diag(\zeta)} \ga_0 (p) = - \gan (\ab^{-1} \nabla u)$. Then:
\item[(i)] The probability that $\wh \Bc^{\diag(\zeta)}$ has a compact resolvent equals 
$\PP \{\lim_{j\to \infty} \frac{\zeta_j}{\sqrt{\mu_j}} = 0\}$ (note that $\mu_j = 0$ only for $ 1 \le j \le b_0 (\pa \D) $, where $b_0 (\pa \D)$ is the number of connected components of $\pa \D$). 

\item[(ii)] If $\re \zeta_j = 0$ a.e. for all $j \in \NN$, then  $\PP \{ \si (\wh \Bc) = \si_\disc (\wh \Bc)\}  =  \PP \{\lim_{j\to \infty} \frac{\zeta_j}{\sqrt{\mu_j}} = 0\}$.

\item[(iii)] Assume additionally that random variables  $|\zeta_j|$, $j \in \NN$, are independent. Then the probability that  $\wh \Bc^{\diag(\zeta)}$ has a compact resolvent is either 0, or 1.
Moreover, $\wh \Bc^{\diag(\zeta)}$ has a compact resolvent with probability 1
if and only if, for every $\de>0$, there exists $n(\de) \in \NN$ such that 
$
0<\prod_{j = n(\de)}^{\infty} \PP \left\{ |\zeta_j|  < \de \sqrt{\mu_j}\right\} 
$.
\end{thm}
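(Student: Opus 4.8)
The plan is to reduce Theorem~\ref{t:Diag} to the diagonal case of Theorem~\ref{t:AM-disComp} by identifying the random contraction $K$ that corresponds, through the parametrization of Theorem~\ref{t:AM-dis} (or rather its second-order counterpart Theorem~\ref{t:BM-dis} together with Remark~\ref{r:Q}), to the diagonal impedance operator $\Zc^{\diag(\zeta)}$. First I would fix the homeomorphism $V$ of Lemma~\ref{l:VV*}, so that $V = (I+\De^\paD)^{-1/4}$ and $V^\cross$ is its extension to $H^{-1/2}(\pa\D)$. Then, starting from the equivalence $\Zc = (V^\cross)^{-1}(I-K)^{-1}(K+I)V^{-1}$ derived before \eqref{e:Imp}, I would invert this relation: $K = (I + \Zc\,)^{-1}\cdots$; more precisely, with $\Zc = \Zc^{\diag(\zeta)}$ one has $V^{-1}\ga_0(p) = (I+\De^\paD)^{1/4}\ga_0(p)$ and $V^\cross\gan(\vbf) = (I+\De^\paD)^{-1/4}\gan(\vbf)$ acting diagonally on the Riesz basis $\{(1+\mu_j)^{\pm 1/4} y_j\}$. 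Working in the orthonormal basis $\{e_j\} := \{y_j\}$ of $L^2(\pa\D)$ (which is \emph{orthonormal} there, even though $\{y_j\}$ is only a Riesz basis in $H^{\pm 1/2}$), the operator $\Zc^{\diag(\zeta)}$ pulled back to $L^2(\pa\D)$ via $V, V^\cross$ becomes the diagonal operator $\sum_j \frac{\zeta_j}{\sqrt{1+\mu_j}}\,(e_j\otimes e_j)$ — the $(1+\mu_j)^{1/4}\cdot(1+\mu_j)^{1/4}$ from the two factors of $V^{-1}$ and $(V^\cross)^{-1}$ combine to $(1+\mu_j)^{1/2}$ in the denominator. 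Hence $K$ is the diagonal random contraction $K = \sum_j \xi_j (e_j\otimes e_j)$ with
\begin{gather*}
\xi_j = \frac{\zeta_j - \sqrt{1+\mu_j}}{\zeta_j + \sqrt{1+\mu_j}},
\end{gather*}
the Cayley-type transform of $\zeta_j/\sqrt{1+\mu_j}$; condition \eqref{e:diagMdis} ($\zeta_j\in\overline\CC_\rr$ a.s.) is exactly what makes $|\xi_j|\le 1$ a.s., and $\re\zeta_j = 0$ corresponds to $|\xi_j|=1$, consistent with the selfadjoint case.

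Next I would apply Theorem~\ref{t:AM-disComp}(i) (via Remark~\ref{r:Q}, which transfers the statement to the second-order operator $\wh\Bc$): the resolvent of $\wh\Bc^{\diag(\zeta)}$ is a.s. compact with probability $\PP\{K+I\in\Sf_\infty\}$, and for a diagonal $K = \sum_j\xi_j(e_j\otimes e_j)$ we have $K+I\in\Sf_\infty$ precisely when $\xi_j\to -1$. So the key computation is the equivalence
\begin{gather*}
\xi_j \to -1 \quad\Longleftrightarrow\quad \frac{\zeta_j}{\sqrt{1+\mu_j}} \to 0 \quad\Longleftrightarrow\quad \frac{\zeta_j}{\sqrt{\mu_j}} \to 0,
\end{gather*}
where the first equivalence is immediate from the formula for $\xi_j$ together with $\re\zeta_j\ge 0$ (so $\zeta_j$ cannot escape to $\infty$ along a direction making $\xi_j$ bounded away from $-1$ unless $|\zeta_j|/\sqrt{1+\mu_j}$ is bounded away from $0$ — one checks $|\xi_j + 1| = 2\sqrt{1+\mu_j}/|\zeta_j+\sqrt{1+\mu_j}|$ and bounds the denominator using $\re\zeta_j\ge 0$), and the second is because $\mu_j\to\infty$ so $\sqrt{1+\mu_j}/\sqrt{\mu_j}\to 1$ (for the finitely many $j\le b_0(\pa\D)$ with $\mu_j=0$, the ratio $\zeta_j/\sqrt{\mu_j}$ is irrelevant to any tail limit, which is the content of the parenthetical remark in (i)). This proves (i). For (ii), I would invoke Theorem~\ref{t:AM-disComp}(iii) (through Remark~\ref{r:Qimp2}): when $\re\zeta_j=0$ a.s., $K$ is a.s. unitary, $\wh\Bc^{\diag(\zeta)}$ is random-selfadjoint, and by Remark~\ref{r:DiscDet} the event $\{\si=\si_\disc\}$ coincides with $\{K+I\in\Sf_\infty\}$, giving the same probability as in (i).

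For (iii), assuming the $|\zeta_j|$ independent, I would first note that the event $\{\zeta_j/\sqrt{\mu_j}\to 0\}$ is a tail event with respect to the $\sigma$-algebras generated by the $|\zeta_j|$ — indeed, since $\re\zeta_j\ge 0$, the convergence $\zeta_j/\sqrt{\mu_j}\to 0$ is equivalent to $|\zeta_j|/\sqrt{\mu_j}\to 0$, which depends only on the tail of the sequence $\{|\zeta_j|\}$ — and then apply Kolmogorov's zero--one law to conclude $\PP(\Om_\comp)\in\{0,1\}$. This mirrors exactly the argument in the proof of Corollary~\ref{c:normal}. The quantitative criterion then comes, as in Corollary~\ref{c:normal}, from introducing for each $\de>0$ the $[-\infty,+\infty]$-valued random variable $\wt n(\de) := \sup\{j\in\NN : |\zeta_j| \ge \de\sqrt{\mu_j}\}$ (with $\sup\emptyset := -\infty$), observing $\Om_\comp = \bigcap_{\de>0}\{\wt n(\de) < +\infty\}$, and using independence to write $\PP\{\wt n(\de) < n\} = \prod_{j\ge n}\PP\{|\zeta_j| < \de\sqrt{\mu_j}\}$; this is positive for $n$ beyond some $n(\de)$ iff $\PP\{\wt n(\de)<+\infty\}=1$, which for a zero--one event is the condition for $\PP(\Om_\comp)=1$.

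The main obstacle I anticipate is the first step: carefully justifying that the Cayley transform identity $\Zc^{\diag(\zeta)} \leftrightarrow K = \sum_j\xi_j(e_j\otimes e_j)$ holds at the level of the (possibly unbounded, closed) operators involved — in particular that the closure operation in \eqref{e:Zfin} is compatible with the bounded transforms $V^{-1}, (V^\cross)^{-1}$ and with the passage $1 \notin \si_\pr(K)$ needed to invert $(I-K)$ diagonally. One must also handle the case $\re\zeta_j = 0$ (so $1 \in \si_\pr(K)$ possibly, i.e., $\zeta_j$ infinite / Dirichlet-type entry is not an issue here since $\zeta_j$ is a genuine complex number, but $|\xi_j|=1$ does occur) by noting that the correspondence of Theorem~\ref{t:AM-dis} between contractions $K$ and m-dissipative extensions is a genuine bijection regardless of whether $1 \in \si_\pr(K)$, and that the impedance form \eqref{e:Imp} is merely a convenient rewriting valid when $1\notin\si_\pr(K)$; for the diagonal case the entrywise correspondence $\zeta_j \leftrightarrow \xi_j$ extends continuously to all $\zeta_j\in\overline\CC_\rr\cup\{\infty\}$ and $\xi_j\in\overline{\DD}$. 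Once this dictionary is in place, parts (i)--(iii) are essentially corollaries of Theorem~\ref{t:AM-disComp}, Remarks~\ref{r:Q} and~\ref{r:Qimp2}, and the elementary asymptotic computation with the Cayley transform, following the template of Corollaries~\ref{c:normal0}--\ref{c:normal}.
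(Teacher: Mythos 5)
Your proposal follows essentially the same route as the paper: fix $V=(I+\De^\paD)^{-1/4}$ from Lemma \ref{l:VV*}, transfer to the first-order operator $\wh\A|_{\GG_{\ab,\beta}}$ via Remark \ref{r:Qimp}, identify the diagonal contraction $K$ with entries $\xi_j=\frac{\zeta_j-\sqrt{1+\mu_j}}{\zeta_j+\sqrt{1+\mu_j}}$, invoke \eqref{e:Pto1} (equivalently Theorem \ref{t:AM-disComp}), and repeat the argument of Corollary \ref{c:normal} for part (iii). One slip: you wrote $|\xi_j+1|=2\sqrt{1+\mu_j}/|\zeta_j+\sqrt{1+\mu_j}|$, but that is $|\xi_j-1|$; the correct identity is $|\xi_j+1|=2|\zeta_j|/|\zeta_j+\sqrt{1+\mu_j}|$ (the paper's form $\xi_j=-1+\frac{2\zeta_j}{\zeta_j+(1+\mu_j)^{1/2}}$), and taken literally your formula would characterize $\xi_j\to-1$ by $|\zeta_j|/\sqrt{\mu_j}\to\infty$ rather than $\to 0$; with the corrected identity and $\re\zeta_j\ge 0$ your stated equivalence goes through exactly as in the paper.
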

\begin{proof}
Let $\wh \Ac $
be the acoustic operator associated with the randomized boundary condition  $\Zc^{\diag(\zeta)} \ga_0 (p) = \gan (\vbf)$.
By Remark \ref{r:Qimp}, it is enough to prove the theorem  for the operator $\wh \A|_{\GG_{\ab,\beta}}$ instead of $\wh \Bc^{\diag(\zeta)}$. For $\wh \A|_{\GG_{\ab,\beta}}$, we can use \eqref{e:Pto1}.
However, we have to pass to the settings of  Example \ref{ex:normal}, which depend on the choice of  the homeomorphisms $V$ and $V^\cross$.

In order to express $\xi_j$ of of  Example \ref{ex:normal} via $\zeta_j$, it is convenient to choose $V$ and $V^\cross$ as in Lemma \ref{l:VV*}. 
In particular, $V y_j = V^\cross y_j = (1+\mu_j)^{-1/4} y_j$ for all $j \in \NN$. The boundary condition 
$\Zc^{\diag(\zeta)} \ga_0 (p) = \gan (\vbf)$
can be equivalently reformulated as \eqref{e:mdisBCR}
with the random contraction $K$ expressed as in Example \ref{ex:normal} via random variables $\xi_j$ given by 
\[
\xi_j := \frac{\zeta_j - (1+\mu_j)^{1/2}}{\zeta_j + (1+\mu_j)^{1/2}} = -1 + \frac{2 \zeta_j}{\zeta_j + (1+\mu_j)^{1/2}}  , \qquad j \in \NN.
\]
In order to obtain statement (i) from \eqref{e:Pto1}, it is enough to  notice that 
 $\lim \mu_j =  +\infty$  implies that the events $\{\om: \lim_{j \to \infty} (\xi_j+1) = 0 \}$ and  $\{\om: \lim_{j \to \infty} \frac{\zeta_j}{(1+\mu_j)^{1/2}} =0 \}$ coincide. Statement (ii) follows from (i) and the combination of Remarks \ref{r:Qimp2}, \ref{r:RandSA}, and \ref{r:DiscDet}.
Statement (iii) can be obtained by the arguments of the proof of Corollary \ref{c:normal}.
 \end{proof}

In the case where $|\zeta_j|$, $j \in \NN$, are i.i.d. random variables, Theorem \ref{t:Diag} (ii) leads to a resolvent compactness criterion written in terms of the counting function 
\[
\textstyle \Nc_{\De^{\pa \D}} (\la) := \# \{ j \in \NN : \mu_j \le \la\} = \sum_{\mu_j \le \la} 1 , \qquad \la \in \RR,
\]
for the eigenvalues of the Laplace-Beltrami operator $\De^{\pa \D}$.

\begin{thm} \label{t:iid}
Suppose \eqref{e:diagMdis} and assume additionally that $|\zeta_j|$, $j \in \NN$, are i.i.d. random variables. Then $\wh \Bc^{\diag(\zeta)} $ has a.s. a compact resolvent
if and only if the random variables $\Nc_{\De^{\pa \D}} (|\zeta_1|^2/\de^2)$ have expectations  
$ \
\EE \ \Nc_{\De^{\pa \D}} \left( |\zeta_1|^2/\de^2 \right) <+\infty \quad \text{for every $\de>0$.}
$
\end{thm}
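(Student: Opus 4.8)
The plan is to reduce the assertion to the criterion of Theorem~\ref{t:Diag}(iii) and then to recognise the infinite product appearing there as the tail of a series whose total sum is exactly the expectation in the statement. Since the $|\zeta_j|$, $j\in\NN$, being i.i.d.\ are in particular independent, Theorem~\ref{t:Diag}(iii) applies under hypothesis~\eqref{e:diagMdis}: $\wh\Bc^{\diag(\zeta)}$ has a.s.\ a compact resolvent if and only if, for every $\de>0$, there is $n(\de)\in\NN$ with $0<\prod_{j=n(\de)}^{\infty}\PP\{|\zeta_j|<\de\sqrt{\mu_j}\}$. Using that the $|\zeta_j|$ are moreover identically distributed, put $q_j(\de):=\PP\{|\zeta_1|\ge\de\sqrt{\mu_j}\}=1-\PP\{|\zeta_1|<\de\sqrt{\mu_j}\}\in[0,1]$, so that the factors of the product equal $1-q_j(\de)$. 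Because $\mu_j\to+\infty$ while $|\zeta_1|<\infty$ a.s., we have $q_j(\de)\to 0$, and $q_j(\de)=1$ (equivalently $\PP\{|\zeta_1|<\de\sqrt{\mu_j}\}=0$) holds for at most finitely many $j$. Choosing $n(\de)$ past the last such index and invoking the elementary product--sum dichotomy $\prod_{j\ge n}(1-q_j)>0\iff\sum_{j\ge n}q_j<\infty$ (valid when $q_j\in[0,1)$), the criterion of Theorem~\ref{t:Diag}(iii) is equivalent to the requirement that $\sum_{j\in\NN}q_j(\de)<\infty$ for every $\de>0$, the finitely many omitted initial terms being irrelevant to convergence.

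It remains to identify $\sum_j q_j(\de)$ with the expectation in the statement. Starting from the definition $\Nc_{\De^\paD}(\la)=\sum_{j\in\NN}\one_{\{\mu_j\le\la\}}$ and substituting the random argument $\la=|\zeta_1|^2/\de^2$ --- which produces a genuine $\overline{\RR}_+$-valued random variable, being a nondecreasing, hence Borel, function of $|\zeta_1|$ --- Tonelli's theorem for series of nonnegative random variables gives
\[
\EE\,\Nc_{\De^\paD}\!\left(|\zeta_1|^2/\de^2\right)=\sum_{j\in\NN}\PP\!\left\{\mu_j\le|\zeta_1|^2/\de^2\right\}=\sum_{j\in\NN}\PP\{|\zeta_1|\ge\de\sqrt{\mu_j}\}=\sum_{j\in\NN}q_j(\de),
\]
where the middle equality is just $\mu_j\le|\zeta_1|^2/\de^2\Leftrightarrow|\zeta_1|\ge\de\sqrt{\mu_j}$. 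Combining this with the conclusion of the previous paragraph yields precisely the claim: $\wh\Bc^{\diag(\zeta)}$ has a.s.\ a compact resolvent if and only if $\EE\,\Nc_{\De^\paD}(|\zeta_1|^2/\de^2)<+\infty$ for every $\de>0$.

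The only point that I expect to require care is the bookkeeping around the (finitely many) vanishing factors $1-q_j(\de)=\PP\{|\zeta_1|<\de\sqrt{\mu_j}\}$ --- these occur exactly for the indices with $\de\sqrt{\mu_j}$ at or below the left endpoint of the support of the law of $|\zeta_1|$, in particular for the null eigenvalues $\mu_j=0$ --- so that the ``$\exists\,n(\de)$'' quantifier in Theorem~\ref{t:Diag}(iii) is faithfully matched with the convergence of the full series $\sum_j q_j(\de)$. The remaining ingredients, namely the product--sum dichotomy for infinite products and Tonelli's theorem, are entirely routine.
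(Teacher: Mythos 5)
Your proof is correct and follows essentially the same route as the paper: both reduce to the criterion of Theorem~\ref{t:Diag}(iii), convert positivity of the tail product into convergence of the series $\sum_j \PP\{|\zeta_1|\ge\de\sqrt{\mu_j}\}$, and identify that series with $\EE\,\Nc_{\De^{\pa\D}}(|\zeta_1|^2/\de^2)$. The only (cosmetic) difference is in the last step, where the paper carries out the identification via an Abel-summation manipulation of the Stieltjes integral $\int \Nc_{\De^{\pa\D}}(s^2/\de^2)\,\dd F(s)$, whereas you apply Tonelli to $\Nc_{\De^{\pa\D}}(\la)=\sum_j\one_{\{\mu_j\le\la\}}$ directly, which is arguably cleaner.
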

\begin{proof}
Let $F (\cdot)= F_{|\zeta_1|} (\cdot) :=\PP \{|\zeta_1| < \cdot\}$ be a distribution function of $|\zeta_1|$ (and so a distribution function for each of $|\zeta_j|$). 
Applying Theorem \ref{t:Diag} (iii), one sees that the resolvent of $\wh \Bc^{\diag(\zeta)} $ is a.s. compact if and only if
\begin{equation}
-\infty < 
\sum_{j \ge n (\de)} \ln F ( \de \sqrt{\mu_j} ) = \sum_{j \ge n (\de)} \ln (1- [1- F ( \de \sqrt{\mu_j} )]) \quad 
\text{for all $\de>0$}.
\label{e:-inf<}
\end{equation}
Since $\mu_j \to +\infty$ and $F ( \de \sqrt{\mu_j} ) \to 1-0$ as $j \to \infty$, condition \eqref{e:-inf<} can be reformulated in terms of the convergence of the series
$
\sum_{k=1}^{+\infty} [1-F(\de \sqrt{\mu_k})]  < +\infty 
$ for all $\de>0$.
From 
\[
\sum_{k=1}^{+\infty} (1-F(\de \sqrt{\mu_k})) = \sum_{k=1}^{+\infty} \int\limits_{\de\sqrt{\mu_k}-0}^{+\infty}  \dd F
= \sum_{k=1}^{+\infty} \int\limits_{\de\sqrt{\mu_k}-0}^{\de\sqrt{\mu_{k+1}}-0}  k  \dd F = 
\sum\limits_{k=1}^{+\infty} \int\limits_{\de\sqrt{\mu_k}-0}^{\de\sqrt{\mu_{k+1}}-0} \Nc_{\De^{\pa \D}}  \left( \frac{s^2}{\de^2} \right) \dd F (s), 
\]
we see that $\displaystyle \sum_{k=1}^{+\infty} (1-F(\de \sqrt{\mu_k})) = \int_{\de\sqrt{\mu_1}-0}^{+\infty} \Nc_{\De^{\pa \D}}  \left( s^2/\de^2 \right) \ \dd F (s)$. Thus, the resolvent of $\wh \A|_{\GG_{\ab,\beta}}$ is a.s. compact
if and only if  
$
\int_\Om \Nc_{\De^{\pa \D}} (|\zeta_1|^2/\de^2) \dd \PP <+\infty$
 for every $\de>0$. 
\end{proof}

\begin{rem} \label{r:bounded}
If the distribution of i.i.d. random variables $|\zeta_j|$ has a bounded support in Theorem \ref{t:iid}, then one can easily see that the resolvent of $\wh \Bc^{\diag(\zeta)}$ is a.s. compact. 
\end{rem}
\begin{rem} \label{r:zjiR}
 If $\zeta_j$ are i.i.d. random variables with a distribution supported on the purely imaginary line $\ii \RR := \{\ii c : c \in \RR\}$ and satisfying  
$ 
\EE \ \Nc_{\De^{\pa \D}} \left( |\zeta_1|^2/\de^2 \right) =\infty$
for a certain $\de>0$, then $\si_\ess (\wh \Bc^{\diag(\zeta)}) \neq \emptyset$ with probability 1.
This follows from  Theorems \ref{t:iid} and \ref{t:Diag}.
\end{rem}

The condition of Theorem \ref{t:iid} can be simplified if additional properties of the counting function $\Nc_{\De^{\pa \D}} $ are known, e.g.,
if it is known that  
\begin{equation} \label{e:asymp}
 \Nc_{\De^{\pa \D}} (\la) \asymp  \la^{(d-1)/2} \quad \text{ as $\la \to +\infty$}.
\end{equation}
Here $g \asymp h$ means that there exists constants $c_1$ and $c_2$ such that $0<c_1 \le g(\la)/h(\la) \le c_2$ for sufficiently large $\la$. Recall that $\D$ is a Lipschitz domain in $\RR^d$. 

\begin{cor}\label{c:Weyl}
Assume that $\zeta_j$, $j \in \NN$, are i.i.d. $\overline{\CC}_\rr$-valued random variables. Assume that asymptotic estimate \eqref{e:asymp} is satisfied for the Laplace-Beltrami operator $\De^{\pa \D}$ on the boundary $\pa \D$. 
Then $\wh \Bc^{\diag(\zeta)}$ has a.s. a compact resolvent if and only if 
the i.i.d. random variables $|\zeta_j|$ have a finite $(d-1)$-th raw moments $\EE |\zeta_1|^{(d-1)} <+\infty$.
\end{cor}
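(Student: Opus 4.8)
The plan is to derive Corollary~\ref{c:Weyl} directly from Theorem~\ref{t:iid} by showing that, under the Weyl-type two-sided estimate \eqref{e:asymp}, the finiteness of $\EE\,\Nc_{\De^{\pa\D}}(|\zeta_1|^2/\de^2)$ for every $\de>0$ is equivalent to the finiteness of the $(d-1)$-th raw moment $\EE\,|\zeta_1|^{d-1}$. First I would invoke Theorem~\ref{t:iid}: since $\zeta_j$ are i.i.d.\ $\overline{\CC}_\rr$-valued, the hypothesis \eqref{e:diagMdis} holds, so $\wh\Bc^{\diag(\zeta)}$ has a.s.\ a compact resolvent iff $\EE\,\Nc_{\De^{\pa\D}}(|\zeta_1|^2/\de^2)<+\infty$ for all $\de>0$. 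The remaining task is purely a statement about the random variable $|\zeta_1|\ge 0$ and the deterministic function $\Nc_{\De^{\pa\D}}$.

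Next I would record the consequences of \eqref{e:asymp}: there are constants $0<c_1\le c_2$ and $\la_0>0$ with $c_1\la^{(d-1)/2}\le \Nc_{\De^{\pa\D}}(\la)\le c_2\la^{(d-1)/2}$ for $\la\ge\la_0$; and for $0\le\la<\la_0$ one has $\Nc_{\De^{\pa\D}}(\la)$ bounded (it is nondecreasing and finite, bounded by $\Nc_{\De^{\pa\D}}(\la_0)$, in fact $\ge b_0(\pa\D)\ge 1$). Hence there exist constants $0<C_1\le C_2$ (depending on $\de$, $d$, $\pa\D$) such that
\[
C_1\bigl(1+|\zeta_1|^{d-1}\bigr)\ \le\ \Nc_{\De^{\pa\D}}\!\left(|\zeta_1|^2/\de^2\right)\ \le\ C_2\bigl(1+|\zeta_1|^{d-1}\bigr)
\]
pointwise on $\Om$: the upper bound uses $\Nc_{\De^{\pa\D}}(\la)\le \Nc_{\De^{\pa\D}}(\la_0)+c_2\la^{(d-1)/2}$ for all $\la\ge0$ together with $\la^{(d-1)/2}=|\zeta_1|^{d-1}/\de^{d-1}$, and the lower bound uses $\Nc_{\De^{\pa\D}}(\la)\ge c_1\la^{(d-1)/2}$ for $\la\ge\la_0$ and $\Nc_{\De^{\pa\D}}(\la)\ge 1$ for $0\le\la<\la_0$, so on $\{|\zeta_1|^2/\de^2<\la_0\}$ the quantity $1+|\zeta_1|^{d-1}$ is itself bounded. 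Taking expectations, $\EE\,\Nc_{\De^{\pa\D}}(|\zeta_1|^2/\de^2)<+\infty$ for some (equivalently, every) $\de>0$ iff $\EE\,|\zeta_1|^{d-1}<+\infty$, since the additive constant contributes only $C_i$. Combining with Theorem~\ref{t:iid} gives the claim.

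I do not anticipate a serious obstacle here; the corollary is essentially a bookkeeping reduction. The one point requiring a little care is the handling of small values of $\la$: because $\Nc_{\De^{\pa\D}}$ is a counting function it is bounded below by $1$ (indeed by $b_0(\pa\D)$) near the origin rather than comparable to $\la^{(d-1)/2}$, so the clean power-law comparison only holds after adding the constant $1$ inside $(1+|\zeta_1|^{d-1})$; this is why the statement is about the raw moment $\EE\,|\zeta_1|^{d-1}$ and not about $\EE\,|\zeta_1|^{d-1}$ with any integrability issue at $0$ (there is none, $|\zeta_1|^{d-1}$ is bounded near $0$ whenever $d\ge 2$). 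One should also note that it suffices to check the condition $\EE\,\Nc_{\De^{\pa\D}}(|\zeta_1|^2/\de^2)<\infty$ for a single $\de$: rescaling $\de$ only rescales the argument of $\Nc_{\De^{\pa\D}}$ by a constant factor, and under \eqref{e:asymp} this changes the expectation by at most a multiplicative constant, so "for every $\de>0$" and "for some $\de>0$" are equivalent here — consistent with the moment reformulation, which does not mention $\de$. With these remarks the proof is complete.
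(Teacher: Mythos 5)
Your proof is correct and follows the same route as the paper, which simply states that the corollary "follows immediately from Theorem \ref{t:iid}"; you have merely written out the routine two-sided comparison $C_1(1+|\zeta_1|^{d-1}) \le \Nc_{\De^{\pa \D}}(|\zeta_1|^2/\de^2) \le C_2(1+|\zeta_1|^{d-1})$ that the paper leaves implicit, including the correct handling of small arguments where the counting function is bounded below by $b_0(\pa\D)\ge 1$ rather than by the power law.
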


\begin{proof}
The statements follow  immediately from Theorem \ref{t:iid}. 
\end{proof}

\begin{rem} \label{r:WeylR}
Assume that, for a certain $\vep>0$, the boundary $\pa \D$ has $C^{2,\vep}$-regularity.
Then the Weyl asymptoics \cite{Z98,Z99,I00,I19} implies the asymptotic estimate \eqref{e:asymp}. Thus, Corollary \ref{c:Weyl} is applicable. In particular,
if i.i.d. random variables $\zeta_j$, $j \in \NN$, have the distribution supported on the purely imaginary line $\ii \RR $ and $\EE |\zeta_1|^{(d-1)} = \infty$, then a.s. $\si_\ess (\wh \Bc^{\diag(\zeta)}) \neq \emptyset$.
\end{rem}

The case of the deterministic constants $\xi_j = \ii c$ with $c >0$ corresponds to a model of a 2-D photonic crystal surrounded by a superconductor  \cite{LL84,FK96,ACL18,EK22}.  The following example can be seen as a randomization of this model.

\begin{ex}[Pareto randomized superconducting boundary]\label{ex:Pareto}
Assume that the boundary \linebreak $\pa \D$ has $C^{2,\vep}$-regularity for a certain $\vep>0$.
Assume that purely imaginary random variables $\ii \zeta_j $, $j \in \NN$, are i.i.d. with the Pareto distribution 
$\mathrm{Pa}(a,s_{\min})$ (this means that they are real-valued and have the density function 
$f_{\ii \zeta_1} (s)$, $s \in \RR$, that is equal to $0$ for $s \le s_{\min}$ and is equal to 
$a s_{\min}^a/s^{a+1}$ for $s>s_{\min}$, where the parameters $a$ and $s_{\min}$ are positive constants). Then Remark \ref{r:WeylR} implies that $\si (\wh \Bc^{\diag(\zeta)}) = \si_\disc (\wh \Bc^{\diag(\zeta)})$ a.s. if and only if $d<a+1$. In other words, the critical value $a_{\mathrm{c}} = d-1$ corresponds to  the emergence of an essential spectrum with probability 1  for the acoustic operator $\wh \Bc^{\diag(\zeta)}$.
\end{ex}

\section{Boundary tuples and the proofs of Theorems \ref{t:AM-dis} and  \ref{t:Imp}}
\label{s:BT+proof}

\subsection{M-boundary tuples of abstract trace spaces and trace maps}
\label{s:MBT}


The triple $(\Hs_{-,+},\Hs,\Hs_{+,-})$ consisting of Hilbert spaces is called a \emph{generalized rigged Hilbert space} 
if spaces $\Hs_{\mp,\pm}$ are dual to each other w.r.t. the $\Hs$-pairing $\<\cdot,\cdot\>_\Hs$ that is obtained from the inner product $(\cdot|\cdot)_\Hs$ of the \emph{pivot space} $\Hs$ as an extension by continuity to 
$\Hs_{\mp,\pm} \times \Hs_{\pm,\mp}$, see  \cite[Vol.2]{RS},  \cite{S21,EK22}, and also Definition \ref{d:GRHSp} in Appendix \ref{ap:A}. This notion is convenient for the description of duality of trace spaces for various wave equations \cite{S21,EK22} and does not assume any of embeddings between the spaces $\Hs_{\mp,\pm}$ and $\Hs$. 

A generalized rigged Hilbert space $(\Hs_+,\Hs,\Hs_-)$ with the continuous embeddings \linebreak $\Hs_+ \imb \Hs \imb \Hs_-$ is a rigged Hilbert space (Gelfand triple) in the standard sense. 
An example of a generalized rigged Hilbert spaces that is not a rigged Hilbert space can be produced by the inversion of the order of spaces in a rigged Hilbert space, e.g., $(H^{-1/2} (\pa \D), L^2 (\pa \D), H^{1/2} (\pa \D))$.
Less trivial examples without any embeddings between $\Hs_{\mp,\pm}$ and $\Hs$ appear in the context of trace spaces for Maxwell operators \cite{EK22}.
On an abstract level, the emergence of the trace spaces from the integration by parts is described by the following definition.

\begin{defn}[m-boundary tuple, \cite{EK22}] \label{d:MBT}
Let $\Ao$ be a closed densely defined symmetric operator in a Hilbert space $\Xf$.
Let auxiliary Hilbert spaces $\Hs$ and $\Hs_{\mp,\pm}$ form  a generalized rigged Hilbert space $(\Hs_{-,+},\Hs,\Hs_{+,-})$. 
We say that $(\Hs_{-,+},\Hs, \Hs_{+,-}, \Ga_0,\Ga_1)$ is an \emph{m-boundary tuple}  for the adjoint operator $\Ao^*$
if the following conditions hold:
\item[(M1)] the map 
$\Ga : f \mapsto \{ \Ga_0 f , \Ga_1 f \} $ is a surjective linear operator from $\dom \Ao^*$ onto $\Hs_{-,+} \oplus \Hs_{+,-}$; 
\item[(M2)] $
(\Ao^*f |g)_{\Xf} - (f |\Ao^*g)_{\Xf} =  \< \Ga_1 f , \Ga_0 g \>_\Hs  - \<\Ga_0 f , \Ga_1 g \>_\Hs $ for all $f,g \in \dom \Ao^*$.
\end{defn}

\begin{rem}  \label{r:RBT}
If additionally the continuous embeddings $\Hs_{-,+} \imb \Hs \imb \Hs_{+,-}$ of a rigged Hilbert space take place in Definition \ref{d:MBT}, we call   
$(\Hs_{-,+},\Hs,\Hs_{+,-},\Ga_0,\Ga_1)$ a \emph{rigged boundary tuple}  for  $\Ao^*$.
Somewhat similar constructions appeared in a less explicit form in \cite{DM95,BM14}. 
\end{rem}
\begin{rem} \label{r:BT}
If $(\Hs, \|\cdot \|_\Hs) = (\Hs_{-,+}, \| \cdot \|_{\Hs_{-,+}}) = (\Hs_{+,-}, \| \cdot \|_{\Hs_{+,-}})$, i.e., if the duality is trivial,  an m-boundary tuple $(\Hs_{-,+},\Hs,\Hs_{+,-},\Ga_0,\Ga_1)$ becomes effectively a \emph{boundary triple} $(\Hs,\Ga_0,\Ga_1)$ in the standard sense introduced by Talyush, Kochubei, and Bruk (see \cite{K75,GG91,P12,DM17,BHdS20}). 
\end{rem}

Boundary triples usually do not fit exactly to the specifics of the integration by parts for PDEs. Therefore various regularizations and generalizations for boundary triples were suggested, see \cite{GG91,DM95,P12,AGW14,BM14,KZ15,DM17,DHM20,S21,DHM22,BGM22,EK22} and references therein.  
The notion of m-boundary tuple was introduced in \cite{EK22} in order to handle trace spaces for the Maxwell system, but is suitable also for other types of wave equations. In particular, m-boundary tuples allows us to   connect  various dual objects, see Sections \ref{s:IntParts} and \ref{s:DtN}.

The operators $\Ga_0$ and $\Ga_1$ in (M1)-(M2) are abstract analogues of traces (boundary maps). The existence of an m-boundary tuple for $\Ao^*$ is equivalent to the existence of a boundary triple for $\Ao^*$ \cite{EK22} (see Lemma \ref{l:BT} below), and so is equivalent to the statement that $\Ao$ has equal deficiency indices.

In the sequel, we assume that   
\begin{gather}
\label{e:BT1} \text{$\Ao$ is a closed symmetric densely defined  operator  in a Hilbert space $\Xf$,}\\
\label{e:BT2} \text{and $(\Hs_{-,+},\Hs, \Hs_{+,-}, \Ga_0,\Ga_1)$ is an m-boundary tuple for $\Ao^*$.}
\end{gather}

 Similarly to the theory of boundary triples \cite{GG91}, it is easy to see  that 
\begin{equation} \label{e:domA}
\dom \Ao = \{\psi \in \dom \Ao^* \ : \ 0 = \Ga_0 \psi = \Ga_1 \psi\}
\end{equation}
and  that the operators $\Ga_0 : \dom \Ao^* \to \Hs_{-,+}$ and $\Ga_1 : \dom \Ao^* \to \Hs_{+,-}$ are bounded, where 
$\dom \Ao^*$ is perceived as a Hilbert space with the graph norm \cite{EK22}.

\subsection{Extension theory and deterministic m-dissipative boundary conditions}
\label{s:Ext}

A \emph{linear relation} from a Hilbert space $\Hs_1$ to a Hilbert space $\Hs_2$ is by definition a linear subspace in the orthogonal sum $\Hs_1 \oplus \Hs_2$. A graph $\Gr B$ of an operator $B:\dom B \subseteq \Hs_1 \to \Hs_2$ is an example of a linear relation. Identifying operators with their graphs, one considers operators as particular cases of linear relations.
Then the notions of closed relation and closed operator are consistent (see \cite{K75,DM95,DM17,BHdS20} for the calculus of linear relations). If $\Th$ is a linear relation from $\Hs$ to $\Hs$, $\Th$ is said to be a linear relation in $\Hs$.

We assume in this subsection that assumptions \eqref{e:BT1}-\eqref{e:BT2} are fulfilled for a symmetric operator $\Ao$. 
An operator $\wt \Ao$ is called an \emph{intermediate extension of} $\Ao$ if 
$ \Gr \Ao \subseteq \Gr \wt \Ao \subseteq \Gr \Ao^*.$

By \eqref{e:domA},  $\dom \Ao = \ker \Ga$. There exists 1--to--1 correspondence between: 
(i) the family of intermediate extensions $\wt \Ao$ of $\Ao$, (ii) the family of subspaces $\dom (\wt \Ao) /\dom (\Ao)$ of the factor space $\dom (\Ao^*) / \dom (\Ao)$, and (iii)
the family of linear relations $\Theta$ from $\Hs_{-,+}$ to $\Hs_{+,-}$.
In the correspondence (i) $\leftrightarrow$ (iii), the linear relation $\Th = \left\{ \{ \Ga_0 u , \Ga_1 u \} \ : \ u \in \dom \wt \Ao \right\}$ is associated with the restriction $\wt \Ao = \Ao_\Theta$ of $\Ao^*$ defined by
\begin{gather} \label{e:ATh}
\text{$\Ao_\Theta := \Ao^*\uph_{\dom \A_\Th}$, \qquad  $\dom \Ao_\Th := \Ga^{-1} \Th = \left\{ u \in \dom \Ao^* \ : \ \{ \Ga_0 u , \Ga_1 u \} \in \Th \right\}$}.
\end{gather}

Let $\Th$ be a linear relation from $\Hs_{-,+}$ to $\Hs_{+,-}$.
A numerical cone of $\Th$ is defined by 
$
\Ncone (\Th) := \{ \< h_{-,+}| h_{+,-} \>_\Hs \ : \ \{ h_{-,+} , h_{+,-} \} \in \Th\}  .
$
A linear relation $\Th$ is said to be symmetric, nonnegative, dissipative,  or accretive if $\Ncone (\Th) \subseteq \RR$, $\Ncone (\Th) \subseteq [0,+\infty) $, $\Ncone (\Th) \subseteq \overline{\CC}_- $, or $\Ncone (\Th) \subseteq \overline{\CC}_\rr $, respectively. 
Besides, a linear relation $\Th$ in each of these classes is called maximal if it cannot be extended to another linear relation of the same class.

The linear relation $\Th^\cross$ is called an $\Hs$-pairing-adjoint (or, in short, $\cross$-adjoint) to $\Th$ if   
$\Th^\cross$  consist of all $\{g_{-,+},g_{+,-}\} \in \Hs_{-,+} \oplus \Hs_{+,-}$ such that $\<h_{+,-} | g _{-,+}\>_\Hs  =  \<h_{-,+} | g_{+,-}\>_\Hs$ for all  $\{ h_{-,+} , h_{+,-} \} \in \Th$.
A linear relation $\Th$ is called $\cross$-selfadjoint if $\Th =\Th^\cross$. 
Note that $\Th^\cross$ is a closed linear relation from $\Hs_{-,+}$ to $\Hs_{+,-}$ and that $(\Th^\cross)^\cross$ is the closure $\overline{\Th}$ of $\Th$ in $\Hs_{-,+} \oplus \Hs_{+,-}$. 
In the case of the trivial duality $\Hs_{\mp,\pm} = \Hs$, the above definitions are standard for linear relations in $\Hs$. In particular, $\Th^\cross$ is a generalization of the adjoint linear relation $\Th^*$ in $\Hs$. 

A linear operator $T:\dom T \subseteq \Hs_{-,+} \to \Hs_{+,-}$ is called symmetric, nonnegative, dissipative,   or accretive if the linear relation $\Gr T$ is so. If $\dom T$ is dense in $\Hs_{-,+}$, the $\cross$-adjoint linear operator $T^\cross$ can be defined via the equality $\Gr (T^\cross) = (\Gr T)^\cross$.
In particular, for $T \in \Lc (\Hs_{-,+},\Hs_{+,-})$,  one has  $T^\cross \in \Lc (\Hs_{-,+},\Hs_{+,-})$.
This allows one to define the real and imaginary parts of such $T$ as 
$\re T := \frac12 (T+T^\cross)$  and $ \im T := \frac1{2\ii}(T-T^\cross) $.

\begin{rem} \label{r:HsAdjoint}
The above definitions of  $\Hs$-pairing-adjoints for operators and linear relations can be naturally extended \cite{EK22} to operators and linear relations acting between spaces $\Hs$ and $\Hs_{\mp,\pm}$. For example, for a bounded operator $W: \Hs \to \Hs_{\mp,\pm}$, there exists a unique bounded operator $W^\cross: \Hs_{\pm,\mp} \to \Hs$ such that 
$\< W f|g\>_\Hs = (  f|W^\cross g)_\Hs$   for all $ f \in \Hs$ and $g \in \Hs_{\pm,\mp}$.
This definition of the $\cross$-adjoint is used in Section \ref{s:DisBC}.
Note that $(W^\cross)^\cross = W$. For $W \in \Hom (\Hs,\Hs_{\mp,\pm})$, one has $W^\cross \in \Hom (\Hs_{\pm,\mp}, \Hs)$ and $(W^\cross)^{-1} = (W^{-1})^\cross$.
\end{rem}


For $\Ao$ satisfying \eqref{e:BT1}-\eqref{e:BT2}, Definition \ref{d:MBT} implies that 
$\Gr \Ao_{\Th_1} \subseteq  \Gr \Ao_{\Th_2}$ if and only if $\Th_1 \subseteq \Th_2$.
Besides, $(\Ao_\Th)^* = \Ao_{\Th^\cross}$ and, for the closures, one has $\overline{\Ao_\Theta} = \Ao_{\overline{\Th}}$. 
Combining these facts with the property (M2) of Definition \ref{d:MBT}, one can easily show \cite{EK22} that 
 \begin{multline} \label{e:ATheqv}
\text{$\Ao_\Theta$ is closed, selfadjoint, (maximal) dissipative, symmetric, accretive,}
\\\text{  if and only if $\Th$ is so as a linear relation from $\Hs_{-,+}$ to $\Hs_{+,-}$.}
\end{multline} 

\begin{rem} \label{r:AThM-dis}
Any maximal dissipative linear relation is closed. Using \eqref{e:CrM-dis1}, we see that
  \begin{equation} \label{e:ATheqvMdis}
\text{$\Ao_\Theta$ is m-dissipative if and only if  $\Th$ is maximal dissipative.}
\end{equation} 
Every dissipative extension $\wt \Ao$ of $\Ao$ is an intermediate extension \cite{P59,GG91}
and can be represented as $\wt \Ao = \Ao_\Theta$. Thus, 
\eqref{e:ATheqvMdis} describes all m-dissipative extensions of $\Ao$, and simultaneously all m-dissipative restrictions of $\Ao^*$. The following result writes this description via `abstract boundary conditions'
extending the idea of Kochubei \cite{K75} to m-boundary tuples.
\end{rem}

Let us fix certain linear homeomorphism $W \in \Hom (\Hs, \Hs_{-,+})$.
\begin{prop}[\cite{EK22}] \label{p:absM-dis} 
An operator $\wh \Ao$ is an m-dissipative extension of $\Ao$ if and only if, for a certain contraction $K: \Hs \to \Hs$, the operator  $\wh \Ao$ is the restriction of $\Ao^*$  associated with the `abstract boundary condition' 
 \begin{gather} \label{e:K+IGa}
(K+I_\Hs) W^{-1} \Ga_0 f + \ii (K-I_\Hs) W^\cross  \Ga_1 f = 0 .
\end{gather}
This equivalence establishes a bijective correspondence between m-dissipative extensions of $\Ao$ and contractions $K$ in $\Hs$. Besides, $\wh \Ao = \wh \Ao^*$ if and only if $K$ is unitary.
\end{prop}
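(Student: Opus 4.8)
The plan is to deduce this proposition from the description of m-dissipative intermediate extensions in terms of maximal dissipative linear relations, combined with the classical parametrization of such relations in a single Hilbert space by contractions. By Remark~\ref{r:AThM-dis} and \eqref{e:ATheqvMdis}, an operator $\wh\Ao$ is an m-dissipative extension of $\Ao$ if and only if $\wh\Ao=\Ao_\Theta$ for a (necessarily unique) maximal dissipative linear relation $\Theta$ from $\Hs_{-,+}$ to $\Hs_{+,-}$, and by \eqref{e:ATheqv} such an extension is selfadjoint exactly when $\Theta=\Theta^\cross$. So it suffices to set up a bijection between contractions $K$ in $\Hs$ and maximal dissipative relations $\Theta$ from $\Hs_{-,+}$ to $\Hs_{+,-}$, carrying unitaries to $\cross$-selfadjoint relations, under which $\dom\Ao_\Theta=\Ga^{-1}\Theta$ is exactly the subspace of $\dom\Ao^*$ singled out by \eqref{e:K+IGa}.

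First I would transport the rigging to the pivot space. Using the fixed $W\in\Hom(\Hs,\Hs_{-,+})$ together with $W^\cross\in\Hom(\Hs_{+,-},\Hs)$ and the identity $(W^\cross)^{-1}=(W^{-1})^\cross$ (Remark~\ref{r:HsAdjoint}), I attach to every linear relation $\Theta$ from $\Hs_{-,+}$ to $\Hs_{+,-}$ the linear relation $\wt\Theta:=\{\{W^{-1}h_{-,+},\,W^\cross h_{+,-}\}:\{h_{-,+},h_{+,-}\}\in\Theta\}$ in $\Hs$. Since $\<W\varphi,(W^\cross)^{-1}\psi\>_\Hs=(\varphi\,|\,\psi)_\Hs$, the numerical cones coincide, $\Ncone(\wt\Theta)=\Ncone(\Theta)$, and $\Theta\mapsto\wt\Theta$ is an order isomorphism of the lattices of linear relations that intertwines $\cross$-adjunction with the adjunction of relations in $\Hs$. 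Hence $\Theta$ is maximal dissipative (resp.\ $\cross$-selfadjoint) if and only if $\wt\Theta$ is a maximal dissipative (resp.\ selfadjoint) linear relation in $\Hs$.

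Next I would invoke the classical parametrization of maximal dissipative linear relations in a Hilbert space by contractions, in its ``abstract boundary condition'' (kernel) form, going back to Kochubei \cite{K75} (see also \cite{GG91,DM17,BHdS20}): a linear relation $\wt\Theta$ in $\Hs$ is maximal dissipative if and only if there is a contraction $K\in\Lc(\Hs)$, unique, with $\wt\Theta=\{\{\varphi,\psi\}\in\Hs\oplus\Hs:(K+I_{\Hs})\varphi+\ii(K-I_{\Hs})\psi=0\}$, and $\wt\Theta$ is selfadjoint iff $K$ is unitary; uniqueness of $K$ is seen from the fact that it is recovered as the Cayley transform $K=(\wt\Theta-\ii I_{\Hs})(\wt\Theta+\ii I_{\Hs})^{-1}$, and the signs here are precisely the ones that place the numerical cone in the correct half-plane for the m-dissipativity convention of this paper. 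Transporting this back through $W$, the membership $\{\Ga_0 f,\Ga_1 f\}\in\Theta$, i.e.\ $\{W^{-1}\Ga_0 f,\,W^\cross\Ga_1 f\}\in\wt\Theta$, becomes $(K+I_{\Hs})W^{-1}\Ga_0 f+\ii(K-I_{\Hs})W^\cross\Ga_1 f=0$, which is exactly \eqref{e:K+IGa}. Chaining the three bijections --- contractions $K$ $\leftrightarrow$ maximal dissipative $\wt\Theta$ $\leftrightarrow$ maximal dissipative $\Theta$ $\leftrightarrow$ m-dissipative $\wh\Ao$ --- and tracking the unitary/selfadjoint case through \eqref{e:ATheqv} yields the proposition.

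The step I expect to be the main obstacle is the classical Hilbert-space parametrization invoked in the third paragraph, specifically the verification that, for every contraction $K$, the single operator equation $(K+I_{\Hs})\varphi+\ii(K-I_{\Hs})\psi=0$ cuts out exactly a \emph{maximal} dissipative relation (rather than a strictly larger, hence non-dissipative, one), and that every maximal dissipative relation arises in this way; contractivity of $K$ is exactly what controls the behaviour on $\ker(I_{\Hs}\pm K)$, and the cleanest route is to argue through the Cayley transform of the corresponding restriction of $\Ao^*$ rather than to manipulate the defining equation by hand. By contrast, the duality bookkeeping of the second paragraph and the translation back in the third are routine once the identity $(W^\cross)^{-1}=(W^{-1})^\cross$ is available.
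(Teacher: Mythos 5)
Your proposal is correct and follows essentially the route the paper (and the cited source [EK22]) takes: reduce via \eqref{e:ATheqvMdis} to maximal dissipative linear relations $\Theta$, transport them to the pivot space $\Hs$ by the map $\Theta\mapsto\Theta_W$ of \eqref{e:ThW} (equivalently, pass to the boundary triple $(\Hs,W^{-1}\Ga_0,W^\cross\Ga_1)$ of Lemma \ref{l:BT}), and then apply Kochubei's classical contraction parametrization of maximal dissipative relations, with the Cayley transform handling both uniqueness and maximality. The only point worth noting is that the sign bookkeeping in the half-plane conventions, which you flag but do not carry out, is exactly where the remaining (routine) care is needed.
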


\begin{prop} \label{p:R-RinS}
In the settings of Proposition \ref{p:absM-dis}, let us consider two contractions $K_j:\Hs \to \Hs$, $j=1,2$,
and two corresponding m-dissipative restrictions $\wt \Ao_j$ of $\Ao^*$ defined by the abstract boundary conditions \eqref{e:K+IGa} with $K=K_j$, $j=1,2$.
Let $\la \in \rho (\wt \Ao_1 ) \cap \rho (\wt \Ao_2 ) $ and $1 \le p \le \infty$. 
Then
$(\wt \Ao_2 - \la)^{-1} - (\wt \Ao_1 - \la)^{-1}  \in \Sf_p (\Xf)$ if and only if $K_2 - K_1 \in \Sf_p (\Hs)$.
\end{prop}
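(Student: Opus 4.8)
The plan is to deduce the equivalence from the abstract Krein-type resolvent formula attached to the m-boundary tuple $(\Hs_{-,+},\Hs,\Hs_{+,-},\Ga_0,\Ga_1)$ of $\Ao^*$ that underlies Proposition~\ref{p:absM-dis}. A preliminary reduction: by the resolvent identity, for all $\mu,\la\in\rho(\wt\Ao_1)\cap\rho(\wt\Ao_2)$,
\[
(\wt\Ao_2-\la)^{-1}-(\wt\Ao_1-\la)^{-1}=\bigl(I+(\la-\mu)(\wt\Ao_2-\la)^{-1}\bigr)\bigl[(\wt\Ao_2-\mu)^{-1}-(\wt\Ao_1-\mu)^{-1}\bigr]\bigl(I+(\la-\mu)(\wt\Ao_1-\la)^{-1}\bigr),
\]
so whether the resolvent difference belongs to $\Sf_p(\Xf)$ does not depend on the choice of $\la$; since $K_2-K_1\in\Sf_p(\Hs)$ is also independent of $\la$, it suffices to prove the equivalence for a single $\la$, and I would take $\la\in\CC_+$, which then lies in $\rho(\wt\Ao_1)\cap\rho(\wt\Ao_2)$ and in $\rho(\Ao_0)$, where $\Ao_0:=\Ao^*\uph\ker\Ga_0$ is the reference extension (it is selfadjoint, being $\Ao_\Th$ for the $\cross$-selfadjoint relation $\{0\}\oplus\Hs_{+,-}$, cf.\ \eqref{e:ATheqv}).

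Rescaling by $W$ one obtains a genuine boundary triple $(\Hs,W^{-1}\Ga_0,W^\cross\Ga_1)$ — condition (M2) of Definition~\ref{d:MBT} becomes the standard Green identity once $W,W^\cross$ are inserted — with $\gamma$-field $\gamma(\la)\colon\Hs\to\Nf_\la:=\ker(\Ao^*-\la)$ and Weyl function $M(\la)\in\Lc(\Hs)$; here $\gamma(\la)$ is a bounded injection with closed range $\Nf_\la$ and hence has a bounded left inverse in $\Lc(\Xf,\Hs)$, while $\gamma(\bar\la)^{*}$ is a bounded surjection and hence has a bounded right inverse. By Proposition~\ref{p:absM-dis}, $\wt\Ao_j=\Ao_{\Th_j}$ with $\Th_j=\{\{u,v\}\in\Hs\oplus\Hs:(K_j+I)u+\ii(K_j-I)v=0\}$ maximal dissipative, and the abstract Krein-type formula for linear relations (Derkach--Malamud, see \cite{DM95,DM17,BHdS20}) gives
\[
(\wt\Ao_j-\la)^{-1}=(\Ao_0-\la)^{-1}+\gamma(\la)\,(\Th_j-M(\la))^{-1}\,\gamma(\bar\la)^{*},
\]
with $(\Th_j-M(\la))^{-1}\in\Lc(\Hs)$, which is equivalent to $\la\in\rho(\wt\Ao_j)$. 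Subtracting these identities for $j=1,2$ and using that $\Sf_p$ is a two-sided ideal while $\gamma(\la),\gamma(\bar\la)^{*}$ have one-sided bounded inverses, one gets
\[
(\wt\Ao_2-\la)^{-1}-(\wt\Ao_1-\la)^{-1}\in\Sf_p(\Xf)\ \Longleftrightarrow\ (\Th_2-M(\la))^{-1}-(\Th_1-M(\la))^{-1}\in\Sf_p(\Hs).
\]

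It remains to show the right-hand side holds if and only if $K_2-K_1\in\Sf_p(\Hs)$. Under the fixed linear isomorphism $\{u,v\}\mapsto\{u+\ii v,\,u-\ii v\}$ of $\Hs\oplus\Hs$ the relation $\Th_j$ turns into $\operatorname{graph}(-K_j)$; carrying this together with the spectral shift by $M(\la)$ through the calculus of linear relations yields
\[
(\Th_j-M(\la))^{-1}=(aI+bK_j)\,R_j(\la)^{-1},\qquad R_j(\la)=C(\la)K_j+D(\la),
\]
with fixed scalars $a,b$ originating from the numerator $I\mp K_j$ and operators $C(\la),D(\la)$ affine-linear in $M(\la)$ (for instance $R_j(\la)=(M(\la)-\ii I)K_j-(M(\la)+\ii I)$, up to the sign conventions of \eqref{e:K+IGa}); moreover $R_j(\la)\in\Hom(\Hs)$ precisely when $\la\in\rho(\wt\Ao_j)$ (any $\phi\in\ker R_j(\la)$ must satisfy $K_j\phi=\phi$, and then the explicit form of $R_j(\la)$ forces $\phi=0$, so $R_j(\la)$ is injective; its surjectivity follows from $(\Th_j-M(\la))^{-1}\in\Lc(\Hs)$). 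The standard difference identity for such linear-fractional expressions then gives
\[
(\Th_2-M(\la))^{-1}-(\Th_1-M(\la))^{-1}=E(\la)\,(K_2-K_1)\,R_2(\la)^{-1},\qquad E(\la)=\bigl(aD(\la)-bC(\la)\bigr)R_1(\la)^{-1},
\]
and the decisive point is that the ``determinant'' $aD(\la)-bC(\la)$ collapses to a nonzero scalar multiple of $I_\Hs$ — this is exactly where one uses that the numerator $K$-coefficient $aI$ is scalar, so that it commutes with the denominator $K$-coefficient $C(\la)$ — whence $E(\la)\in\Hom(\Hs)$. Since $E(\la)$ and $R_2(\la)^{-1}$ are homeomorphisms, this identity yields both implications and, with the previous step, the proposition.

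The step I expect to require the most care is the last one. The obstacle is that for extensions with $I\pm K_j\notin\Hom(\Hs)$ — e.g.\ the Dirichlet-type and Neumann-type conditions — one cannot replace the relation $\Th_j$ by an operator parameter $W^\cross\Ga_1=B_jW^{-1}\Ga_0$, so the computation must be done in the calculus of linear relations, carefully tracking the transport of the Weyl function under the Cayley-type change of variables; and one must verify that the left factor $E(\la)$ in the resolvent-difference identity is an honest homeomorphism (not merely an injection), which hinges on the scalarity of the numerator $K$-coefficient.
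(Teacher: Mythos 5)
Your proposal is correct in substance, but it does far more work than the paper: the paper's entire proof consists of your first step (observing that $(\Hs, W^{-1}\Ga_0, W^\cross\Ga_1)$ is a boundary triple for $\Ao^*$, via Lemma \ref{l:BT}) followed by a citation of \cite[Theorem 3.1]{GG91}, which is precisely the classical statement that for two maximal dissipative extensions parametrized by contractions $K_1,K_2$ in a boundary triple the resolvent difference lies in $\Sf_p$ iff $K_2-K_1$ does. What you have written is essentially a self-contained proof of that cited theorem by the Krein-formula route, and the mechanism you identify is the right one: writing $(\Th_j-M(\la))^{-1}=N_jR_j^{-1}$ with $N_j=-\ii(K_j-I)$, $R_j=CK_j+D$, $C=I+\ii M(\la)$, $D=I-\ii M(\la)$, the cross term collapses because $C+D=2I$ (equivalently your $aD-bC=2\ii I$). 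One caveat on bookkeeping: since $K_j$ does not commute with $M(\la)$, the left ``denominator'' in the difference identity is not $R_1^{-1}$ but $\wt R_1^{-1}$ with $\wt R_1=K_1C+D$ (the other order), coming from the alternative representation $(\Th_1-M(\la))^{-1}=\wt R_1^{-1}N_1$; one then gets $\wt R_1\bigl[(\Th_2-M(\la))^{-1}-(\Th_1-M(\la))^{-1}\bigr]=-2\ii\,(K_2-K_1)\,R_2^{-1}$, which yields the forward implication without ever inverting $\wt R_1$, while the converse implication follows from the elementary expansion $N_2R_2^{-1}-N_1R_1^{-1}=b(K_2-K_1)R_2^{-1}+N_1R_1^{-1}C(K_1-K_2)R_2^{-1}$, in which $K_2-K_1$ is already sandwiched between bounded operators. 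With that ordering repaired, your argument is complete and provides a proof of the black-boxed ingredient that the paper leaves to the literature; the trade-off is length versus self-containedness, and your route has the advantage of exhibiting explicitly where the scalarity of the numerator coefficient is used.
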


\begin{proof}
It is proved in \cite{EK22} that $(\Hs, W^{-1} \Ga_0 , W^\cross  \Ga_1)$ is a boundary triple for $\Ao^*$ (see Lemma \ref{l:BT} below). Therefore, the direct application of \cite[Theorem 3.1]{GG91} proves the proposition.
\end{proof}

\subsection{Acoustic boundary tuples and the proof of Theorems \ref{t:AM-dis} and \ref{t:Imp}\label{s:IntParts}}

The  space $L^2 (\pa \D)$ will play the role of a pivot space. Its scalar product $(\cdot|\cdot)_{L^2 (\pa \D)} $ generates for each $s \in (0,1]$ two sesquilinear forms that provide the parings of the Sobolev space $H^{-s} (\pa \D)$ with $H^{s} (\pa \D) $,  and conversely, of $H^{s} (\pa \D)$ with $H^{-s} (\pa \D) $. These two sesquilinear forms are denoted in the same way $\<\cdot,\star\>_{L^2 (\pa \D)}$ and are called the $L^2 (\pa \D)$-pairings.  We use these $L^2 (\pa \D)$-pairings for the rigged Hilbert space $H^{1/2} (\pa \D) \imb  L^2 (\pa \D) \imb H^{-1/2} (\pa \D)$ and the generalized rigged Hilbert space 
$(H^{-1/2} (\pa \D),  L^2 (\pa \D) ,  H^{1/2} (\pa \D))$ (see Section \ref{s:MBT} and Appendix \ref{ap:A}).

The integration by parts formula
$
\int_\D \vbf \cdot \nabla p + \int_\D p (\nabla \cdot \vbf) = \int_{\pa \D} p (\vbf \cdot \n) ,
$
which is valid for regular enough $\vbf:\D\to \CC^d$ and $p:\D \to \CC$,  can be extended \cite{M03,ACL18} to the  equality 
\begin{gather} \label{e:IbyP}
( \gradm p | \vbf)_{L^2 (\D,\CC^3)} + (p | \Div \vbf)_{L^2 (\D)} = \<\ga_0 p | \ga_n \vbf \>_{L^2 (\pa \D)} , \quad p \in H^1 (\D), \ \vbf \in \HH (\Div, \D),
\end{gather}
where $\ga_0  \in \Lc (H^1 (\D), H^{1/2} (\pa \D))$  and $ \gan \in \Lc (H^1 (\D,\CC^d), H^{-1/2} (\pa \D) )$
are the surjective operators of scalar and normal trace, respectively (see Section \ref{s:DisBC} and \cite{M03,ACL18}).
The integration by parts for the operator $\A^*$ of  Section \ref{s:DisBC} can be written via m-boundary tuples.

\begin{prop} \label{p:acuBT}
Let us define  $\wh \ga_0  (\{\vbf,p\}) := \ga_0 (p) $ and $\wh \gan (\{\vbf,p\}) := \gan (\vbf)$. Then
\begin{multline} \label{e:mTf}
 \Tc :=(H^{1/2} (\pa \D), L^2 (\pa \D) , H^{-1/2} (\pa \D),  \wh \ga_0, \ (-\ii) \wh \gan ) 
\\ \text{ and } \qquad \Tc_* :=(H^{-1/2} (\pa \D), L^2 (\pa \D) , H^{1/2} (\pa \D),  \wh \gan, (- \ii)\wh \ga_0  ) 
\end{multline} 
are m-boundary tuples for the acoustic operator $\A^*$. We say that these two m-boundary tuples are dual to each other.
\end{prop}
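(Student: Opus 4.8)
The plan is to verify the two defining conditions (M1) and (M2) of Definition \ref{d:MBT} for each of the tuples $\Tc$ and $\Tc_*$ in \eqref{e:mTf}, relying on the facts already recorded in Section \ref{s:DisBC}: that $\A$ is closed, densely defined and symmetric; that $\dom \A^* = \HH(\Div,\D)\times H^1(\D)$ with $\A^*$ acting by the differential expression $\af_{\ab,\beta}$; that $\ga_0\in\Lc(H^1(\D),H^{1/2}(\pa\D))$ and $\gan\in\Lc(\HH(\Div,\D),H^{-1/2}(\pa\D))$ are surjective; and that the integration by parts formula \eqref{e:IbyP} holds. The generalized rigged Hilbert space structures that are needed, namely $(H^{1/2}(\pa\D),L^2(\pa\D),H^{-1/2}(\pa\D))$ for $\Tc$ and $(H^{-1/2}(\pa\D),L^2(\pa\D),H^{1/2}(\pa\D))$ for $\Tc_*$, are precisely the standard Sobolev duality on the Lipschitz boundary $\pa\D$ relative to the two $L^2(\pa\D)$-pairings $\<\cdot,\cdot\>_{L^2(\pa\D)}$ (Section \ref{s:IntParts} and Appendix \ref{ap:A}); in fact the first one is a rigged Hilbert space in the ordinary sense.

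For (M1) in the case of $\Tc$: the map $\Ga\colon\{\vbf,p\}\mapsto\{\ga_0(p),-\ii\gan(\vbf)\}$ is manifestly linear, and it is bounded from $\dom\A^*$ (equipped with the graph norm) into $H^{1/2}(\pa\D)\oplus H^{-1/2}(\pa\D)$, since that graph norm dominates the natural norm of $\HH(\Div,\D)\times H^1(\D)$ on which $\ga_0$ and $\gan$ are bounded. Surjectivity is immediate because $\Ga$ acts ``diagonally'' on the product domain and each of $\ga_0$, $\gan$ is onto its target space: given $(\phi,\psi)\in H^{1/2}(\pa\D)\oplus H^{-1/2}(\pa\D)$ one chooses $p\in H^1(\D)$ with $\ga_0(p)=\phi$ and $\vbf\in\HH(\Div,\D)$ with $\gan(\vbf)=\ii\psi$, so that $\Ga\{\vbf,p\}=(\phi,\psi)$. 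Multiplication by $-\ii$ is harmless. The argument for $\Tc_*$ is identical with the roles of $\ga_0$ and $\gan$ (and of $H^{1/2}$ and $H^{-1/2}$) interchanged.

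Condition (M2) is the only point that needs a short computation. For $f=\{\vbf,p\}$ and $g=\{\wbf,q\}$ in $\dom\A^*$, the weights defining the inner product of $\LL^2_{\ab,\beta}(\D)$ cancel the factors $\ab^{-1}$, $\beta^{-1}$ appearing in $\af_{\ab,\beta}$, so that $(\A^*f\,|\,g)_{\LL^2_{\ab,\beta}}-(f\,|\,\A^*g)_{\LL^2_{\ab,\beta}}$ reduces, up to an overall factor $-\ii$, to the sum of the two ``crossed'' expressions $(\gradm p\,|\,\wbf)_{L^2(\D)}+(p\,|\,\Div\wbf)_{L^2(\D)}$ and $(\Div\vbf\,|\,q)_{L^2(\D)}+(\vbf\,|\,\gradm q)_{L^2(\D)}$. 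The first equals $\<\ga_0(p)\,|\,\gan(\wbf)\>_{L^2(\pa\D)}$ directly by \eqref{e:IbyP}; the second becomes $\<\gan(\vbf)\,|\,\ga_0(q)\>_{L^2(\pa\D)}$ after applying \eqref{e:IbyP} to $q,\vbf$ and taking complex conjugates, using the Hermitian relation $\overline{\<a\,|\,b\>_{L^2(\pa\D)}}=\<b\,|\,a\>_{L^2(\pa\D)}$ between the two $L^2(\pa\D)$-pairings. Recalling $\wh\ga_0 f=\ga_0(p)$, $(-\ii)\wh\gan f=-\ii\gan(\vbf)$ and the conjugate-linearity of the pairing in its second argument, the resulting identity is exactly $\<(-\ii)\wh\gan f,\wh\ga_0 g\>_{L^2(\pa\D)}-\<\wh\ga_0 f,(-\ii)\wh\gan g\>_{L^2(\pa\D)}$, which is (M2) for $\Tc$.

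Finally, reading the very same identity with the pair $(\Ga_0,\Ga_1)=(\wh\gan,(-\ii)\wh\ga_0)$ of $\Tc_*$ reproduces (M2) for $\Tc_*$ as well; the only thing to check is that the powers of $\ii$ recombine without a sign discrepancy, which is precisely the sense in which the two tuples are dual. I expect no substantial obstacle here: the entire proof rests on \eqref{e:IbyP} and the surjectivity of the trace operators, both available from the excerpt, and the single delicate point is careful bookkeeping of the factors $\ii$ and of the conjugate-linearity of the $L^2(\pa\D)$-pairing, so that the Green-type identity in (M2) comes out with the correct sign for both $\Tc$ and $\Tc_*$.
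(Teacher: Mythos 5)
Your proposal is correct and follows exactly the route of the paper's own (one-sentence) proof, which cites precisely the same three ingredients: the integration-by-parts identity \eqref{e:IbyP}, the mutual $L^2(\pa\D)$-duality of $H^{\pm 1/2}(\pa\D)$, and the surjectivity of $\ga_0$ and $\gan$. Your bookkeeping of the weights and the factors of $\ii$ checks out, so this is simply a fleshed-out version of the argument the paper leaves to the reader.
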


\begin{proof}
The statement follows from the integration by parts \eqref{e:IbyP}, the mutual duality of the spaces $H^{\pm 1/2} (\pa \D)$ w.r.t. $L^2 (\pa \D) $, and the surjectivity of $\ga_0$ and $ \gan$.
\end{proof}

\begin{proof}[Proof of Theorem \ref{t:AM-dis}]
The combination  of Propositions \ref{p:absM-dis} and \ref{p:acuBT} for the m-boundary tuple $\Tc$ immediately proves Theorem \ref{t:AM-dis}.
\end{proof}

\begin{proof}[Proof of Theorem \ref{t:Imp}]
Theorem \ref{t:Imp} can be proved similarly to the analogous result for the Maxwell system in \cite[Corollary 2.3]{EK22}. That is, the abstract result of  \cite[Corollary 7.2]{EK22} is applied to 
the m-boundary tuple $\Tc$ for the operator $\Ac^*$. 
\end{proof}

\section{Dirichlet-to-Neumann maps and Krein resolvent formulae}
\label{s:DtNKrF}

In the settings of boundary triples, abstract Weyl functions and generalized Krein resolvent formulae were  developed by Derkach and Malamud \cite{DM91,DM95} (see also the monographs \cite{DM17,BHdS20}). In this section, a part of this theory is extended  to the case of m-boundary tuples and, using Proposition \ref{p:acuBT},  applied to acoustic operators in order to prove Theorems \ref{t:NtD} and \ref{t:aKrF}.

In this section, we suppose that a symmetric operator $\Ao$ satisfies assumptions  
\eqref{e:BT1}--\eqref{e:BT2} of Section \ref{s:MBT} and that a certain linear homeomorphism $W \in \Hom (\Hs,\Hs_{-,+})$ is chosen.

\subsection{Neumann-to-Dirichlet maps and the proof of Theorem \ref{t:NtD}}
\label{s:DtN}

Fixing an arbitrary $W \in \Hom (\Hs,\Hs_{-,+})$, one can construct boundary triples from a boundary tuple with the use of the following lemma (for the definition of a boundary triple, see Remark \ref{r:BT} and, in more details, the monographs \cite{GG91,P12,DM17,BHdS20}).

\begin{lem}[\cite{EK22}]\label{l:BT}
 Let $(\Hs_{-,+},\Hs,\Hs_{+,-}\Ga_0,\Ga_1)$ be an m-boundary tuple (for $\Ao^*$). Then
 \[
\text{$ \Tf^W = (\Hs, \Ga_0^W, \Ga_1^W) := (\Hs, W^{-1} \Ga_0, W^\cross \Ga_1) $ 
\quad and \quad $\Tf^W_* = (\Hs, \ii \Ga_1^W,  (-\ii) \Ga_0^W)$}
\]
 are  boundary triples (which we call dual to each other).
\end{lem}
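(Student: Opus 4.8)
The plan is to verify directly that both triples $\Tf^W$ and $\Tf^W_*$ satisfy the axioms of a boundary triple for $\Ao^*$, i.e. that the compound boundary map is surjective onto $\Hs \oplus \Hs$ and that the abstract Green (Lagrange) identity holds with the \emph{trivial} pairing $(\cdot|\cdot)_\Hs$. First I would treat $\Tf^W = (\Hs, W^{-1}\Ga_0, W^\cross \Ga_1)$. Since $(\Hs_{-,+},\Hs,\Hs_{+,-},\Ga_0,\Ga_1)$ is an m-boundary tuple, by (M1) the map $f \mapsto \{\Ga_0 f, \Ga_1 f\}$ is a surjective linear operator from $\dom \Ao^*$ onto $\Hs_{-,+}\oplus\Hs_{+,-}$. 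Composing with the homeomorphism $W^{-1}\oplus W^\cross : \Hs_{-,+}\oplus\Hs_{+,-} \to \Hs\oplus\Hs$, which is a linear homeomorphism because $W \in \Hom(\Hs,\Hs_{-,+})$ forces $W^\cross \in \Hom(\Hs_{+,-},\Hs)$ (see Remark \ref{r:HsAdjoint}), one obtains that $f\mapsto\{W^{-1}\Ga_0 f, W^\cross\Ga_1 f\}$ is surjective onto $\Hs\oplus\Hs$. This is axiom (B1) for a boundary triple.

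Next I would check the Green identity. The defining property of the $\cross$-adjoint $W^\cross$ (Remark \ref{r:HsAdjoint}) is that $\langle W g, h\rangle_\Hs = (g \mid W^\cross h)_\Hs$ for $g \in \Hs$, $h \in \Hs_{+,-}$; equivalently, for $u \in \Hs_{-,+}$ and $v \in \Hs_{+,-}$ one has $\langle u, v\rangle_\Hs = \langle W (W^{-1}u), v\rangle_\Hs = (W^{-1}u \mid W^\cross v)_\Hs$. Applying this with $u = \Ga_0 f$, $v = \Ga_1 g$ and separately with $u = \Ga_0 g$, $v = \Ga_1 f$, the m-boundary tuple identity (M2),
\[
(\Ao^* f \mid g)_\Xf - (f \mid \Ao^* g)_\Xf = \langle \Ga_1 f, \Ga_0 g\rangle_\Hs - \langle \Ga_0 f, \Ga_1 g\rangle_\Hs,
\]
becomes
\[
(\Ao^* f \mid g)_\Xf - (f \mid \Ao^* g)_\Xf = ( W^{-1}\Ga_0 g \mid W^\cross \Ga_1 f)_\Hs - (W^{-1}\Ga_0 f \mid W^\cross \Ga_1 g)_\Hs,
\]
where I used that $\langle\Ga_1 f,\Ga_0 g\rangle_\Hs = \overline{\langle\Ga_0 g,\Ga_1 f\rangle_\Hs} = \overline{(W^{-1}\Ga_0 g\mid W^\cross\Ga_1 f)_\Hs} = (W^\cross\Ga_1 f \mid W^{-1}\Ga_0 g)_\Hs$ — one must be slightly careful with which slot is conjugate-linear, but with the convention that $(\cdot|\cdot)_\Hs$ is linear in the first argument this reads $( W^\cross\Ga_1 f \mid W^{-1}\Ga_0 g)_\Hs - (W^{-1}\Ga_0 f \mid W^\cross\Ga_1 g)_\Hs$, which is exactly the standard boundary-triple Green identity with $\Ga_0^W = W^{-1}\Ga_0$ and $\Ga_1^W = W^\cross\Ga_1$. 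Thus $\Tf^W$ is a boundary triple for $\Ao^*$.

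For the dual triple $\Tf^W_* = (\Hs, \ii\Ga_1^W, (-\ii)\Ga_0^W)$, surjectivity onto $\Hs\oplus\Hs$ is immediate from surjectivity of $\{\Ga_0^W f, \Ga_1^W f\}$ together with the fact that $(x,y)\mapsto(\ii y, -\ii x)$ is a bijection of $\Hs\oplus\Hs$; and the Green identity follows from the one just established for $\Tf^W$ by the algebraic identity $(\ii\beta\mid -\ii\alpha)_\Hs - (\ii\alpha\mid -\ii\beta)_\Hs = |\ii|^2[(\beta\mid\alpha)_\Hs - (\alpha\mid\beta)_\Hs] = (\alpha\mid\beta)_\Hs - (\beta\mid\alpha)_\Hs = -[(\beta\mid\alpha)_\Hs-(\alpha\mid\beta)_\Hs]$ — matching signs with the $\Tf^W$ identity after relabelling $\alpha = \Ga_0^W f$, $\beta = \Ga_1^W f$ etc. The only point requiring genuine care — and the one I expect to be the main (mild) obstacle — is bookkeeping the conjugate-linearity conventions in the $\Hs$-pairing versus the $\Hs$-inner-product so that the sign in the Green identity comes out correctly; everything else is a routine composition of a homeomorphism with the already-given m-boundary-tuple structure, and in fact this lemma is quoted from \cite{EK22}, so the proof reduces to citing that reference after recording the two displayed computations above.
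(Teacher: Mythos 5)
Your proof is correct and is exactly the expected argument: the paper itself offers no proof of this lemma but simply cites \cite{EK22}, and your direct verification --- surjectivity via composition of (M1) with the homeomorphism $W^{-1}\oplus W^\cross$, and the Green identity via the defining relation $\<Wf|g\>_\Hs=(f|W^\cross g)_\Hs$ of Remark \ref{r:HsAdjoint} applied to (M2) --- is precisely how the cited reference proceeds. The only blemish is the intermediate factor $|\ii|^2$ in your last display, which should be $\ii\cdot\overline{(-\ii)}=-1$; your first and final expressions in that chain are nevertheless equal, so the conclusion stands.
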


It follows from Definition \ref{d:MBT} of m-boundary tuple that, for $j=0,1$, the restrictions 
\begin{gather} \label{e:whA01}
\wh \Ao_j = \Ao^* \uph_{\ker \Ga_j} = \Ao^* \uph_{\ker \Ga_j^W} \text{are selfadjoint extensions of $\Ao$.} 
\end{gather}

Applying \cite[Section 1.2]{DM91} to the boundary triple $\Tf^W$, one sees that there exists
a unique holomorphic function  $M_W : \rho (\wh \Ao_0) \to \Lc (\Hs)$ such that 
$M_W (z) \Ga^W_0 f_z = \Ga^W_1 f_z $  for all $f_z \in \ker (\Ao^* - z I_\Xf)$, \quad $z  \in \rho (\wh \Ao_0)$. 
In the terminology of \cite{DM91}, $M_W$ is called  \emph{the Weyl function  corresponding to the boundary triple $\Tf^W$ (for  $\Ao^*$)}. Note that 
the Weyl function corresponding to the \emph{dual boundary triple} $\Tf^W_*$ is defined on $\rho (\wh \Ao_1)$ and equals $(-1)(M_W (z))^{-1}$ for $z \in \rho (\wh \Ao_0) \cap \rho (\wh \Ao_1)$. In particular,
 one has  $M_W (z) \in \Hom (\Hs)$ for all $z \in \rho (\wh \Ao_0) \cap \rho (\wh \Ao_1)$.

The abstract versions of DtN and NtD maps associated with the m-boundary tuple $(\Hs_{-,+},\Hs,\Hs_{+,-}\Ga_0,\Ga_1)$ now can be defined 
via 
\begin{gather} \label{e:M=MV1}
\text{\qquad $M (z) :=(W^\cross)^{-1} M_W (z  ) W^{-1} $ for $z \in \rho (\wh \Ao_0)$, } \\
\text{and  \quad
$\wt M (z) := (-1)(M(z))^{-1}$ for $z \in \rho (\wh \Ao_0) \cap \rho (\wh \Ao_1)$.
}
\label{e:M=MV2}
\end{gather}

\begin{rem} \label{r:indM}
 $\wt M $ can be extended to an $\Lc (\Hs_{+,-},\Hs_{-,+})$-valued holomorphic function in $\rho (\wh \Ao_1) $. The definitions of $M$ and $\wt M$ are independent of the choice of $W$ and lead to the following.
\end{rem}

\begin{prop}[$\Ga_0$--to--$\Ga_1$ map] \label{p:WeylF}
There exists a unique holomorphic function \linebreak
$
M(z): \rho (\wh \Ao_0) \to \Lc (\Hs_{-,+},\Hs_{+,-})  
$
that satisfies the equality 
\quad $M (z) \Ga_0 f_z = \Ga_1 f_z $ \quad for all $f_z \in \ker (\Ao^* - z)$ and $z  \in \rho (\wh \Ao_0) $. Moreover, this function 
 has the following properties:
\item[(i)] The operator $(\im z) \im M (z)$ is nonnegative for every $z \in \CC \setminus \RR$ (in the sense of Section \ref{s:Ext}).
\item[(ii)] $M (\overline{z}) = (M (z))^\cross$ for all $z \in \rho (\wh \Ao_0)$.
\item[(iii)] $M (z) \in \Hom (\Hs_{-,+},\Hs_{+,-})$  for every $z \in \rho (\wh \Ao_0) \cap \rho (\wh \Ao_1)$.\\
We say that this function $M$ is the \emph{$\Ga_0$--to--$\Ga_1$ map} \emph{corresponding to the m-boundary tuple $(\Hs_{-,+},\Hs,\Hs_{+,-}\Ga_0,\Ga_1)$} (for the operator $\Ao^*$).
\end{prop}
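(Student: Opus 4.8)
The plan is to construct $M$ by transporting, through the fixed homeomorphism $W\in\Hom(\Hs,\Hs_{-,+})$ of \eqref{e:M=MV1}, the classical Weyl function $M_W$ of the boundary triple $\Tf^W=(\Hs,W^{-1}\Ga_0,W^\cross\Ga_1)$ supplied by Lemma \ref{l:BT}, and then to deduce (i)--(iii) from the known properties of an operator Nevanlinna function due to Derkach and Malamud \cite{DM91,DM95}.

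First I would record the structural fact underlying both existence and uniqueness: for $z\in\rho(\wh \Ao_0)$ the restriction of $\Ga_0$ to the defect subspace $\ker(\Ao^*-z)$ is a bijection onto $\Hs_{-,+}$. Injectivity follows from $\ker(\Ao^*-z)\cap\ker\Ga_0=\ker(\wh \Ao_0-z)=\{0\}$, using that $\wh \Ao_0=\Ao^*\uph_{\ker\Ga_0}$ is selfadjoint by \eqref{e:whA01}; surjectivity follows from (M1) together with the splitting $g=(\wh \Ao_0-z)^{-1}(\Ao^*-z)g+f_z$ with $f_z:=g-(\wh \Ao_0-z)^{-1}(\Ao^*-z)g\in\ker(\Ao^*-z)$ and $\Ga_0 f_z=\Ga_0 g$. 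Hence any operator satisfying $M(z)\Ga_0 f_z=\Ga_1 f_z$ on $\ker(\Ao^*-z)$ is uniquely determined, which at the same time gives uniqueness and the independence of the construction of the choice of $W$ claimed in Remark \ref{r:indM}.

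Next, for existence and holomorphy, I would set $M(z):=(W^\cross)^{-1}M_W(z)W^{-1}$ for $z\in\rho(\wh \Ao_0)$, where $M_W$ is the Weyl function of $\Tf^W$ from \cite[Section 1.2]{DM91}. Since $W^{-1}\in\Hom(\Hs_{-,+},\Hs)$, $(W^\cross)^{-1}\in\Hom(\Hs,\Hs_{+,-})$, and $z\mapsto M_W(z)$ is an $\Lc(\Hs)$-valued holomorphic function on $\rho(\wh \Ao_0)$, the function $M$ is $\Lc(\Hs_{-,+},\Hs_{+,-})$-valued and holomorphic; and for $f_z\in\ker(\Ao^*-z)$ one has $\Ga_0^W f_z=W^{-1}\Ga_0 f_z$ and $\Ga_1^W f_z=W^\cross\Ga_1 f_z$, so the defining identity $M_W(z)\Ga_0^W f_z=\Ga_1^W f_z$ of the Weyl function yields $M(z)\Ga_0 f_z=\Ga_1 f_z$. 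Properties (i)--(iii) are then inherited from $M_W$ via the $\cross$-adjoint calculus of Section \ref{s:Ext} and Remark \ref{r:HsAdjoint}, using $(W^{-1})^\cross=(W^\cross)^{-1}$ and $(W^\cross)^\cross=W$: from the symmetry $M_W(\overline z)=(M_W(z))^*$ of the Weyl function I obtain $(M(z))^\cross=(W^\cross)^{-1}(M_W(z))^*W^{-1}=M(\overline z)$, which is (ii); writing $\im M(z)=(W^\cross)^{-1}(\im M_W(z))W^{-1}$ and using the pairing identity $\<h,(W^\cross)^{-1}y\>_\Hs=(W^{-1}h\,|\,y)_\Hs$ I obtain $\<h,(\im M(z))h\>_\Hs=((\im M_W(z))W^{-1}h\,|\,W^{-1}h)_\Hs$, which has the sign of $\im z$ because $M_W$ is Nevanlinna, which is (i); and since $M_W(z)\in\Hom(\Hs)$ for $z\in\rho(\wh \Ao_0)\cap\rho(\wh \Ao_1)$ (the dual triple $\Tf^W_*$ has Weyl function $-(M_W(z))^{-1}$ on $\rho(\wh \Ao_1)$), $M(z)$ is there a composition of linear homeomorphisms, which is (iii).

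The boundary-triple input (the first step and the appeal to \cite{DM91}) is routine; the point demanding care is the bookkeeping of $\cross$-adjoints among the three spaces $\Hs_{-,+}$, $\Hs$, $\Hs_{+,-}$ — in particular checking that the Nevanlinna property of $M_W$ transfers to (i) as nonnegativity of the numerical cone of $(\im z)\im M(z)$ in the generalized sense of Section \ref{s:Ext}, rather than as positivity of a sesquilinear form on a single Hilbert space.
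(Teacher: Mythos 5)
Your proposal is correct and follows essentially the same route as the paper: the paper's proof likewise defines $M(z)=(W^\cross)^{-1}M_W(z)W^{-1}$ via formulae \eqref{e:M=MV1}--\eqref{e:M=MV2}, Lemma \ref{l:BT}, and Remark \ref{r:HsAdjoint}, and imports properties (i)--(iii) from the Derkach--Malamud theory of abstract Weyl functions. You merely spell out the details (bijectivity of $\Ga_0$ on the defect subspace and the $\cross$-adjoint bookkeeping) that the paper leaves to the cited references.
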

\begin{proof}
Since $W \in \Hom (\Hs,\Hs_{-,+})$ and since $W^\cross \in \Hom (\Hs_{+,-},\Hs)$ is the $\cross$-adjoint of $W$, the proposition follows from the combination of formulae \eqref{e:M=MV1}-\eqref{e:M=MV2}, Lemma \ref{l:BT}, and Remark \ref{r:HsAdjoint} with the properties of abstract Weyl functions in \cite[Section 1]{DM91}  (or in  \cite{DM95,DM17}).   
\end{proof}

\begin{proof}[Proof of Theorem \ref{t:NtD}]
Now, we are ready to consider 
the NtD map $M_{\NtD}$ for the acoustic system. Using Proposition \ref{p:WeylF} together with the m-boundary tuple 
\[
\Tc_*  =(H^{-1/2} (\pa \D), L^2 (\pa \D) , H^{1/2} (\pa \D),  \wh \gan, (- \ii)\wh \ga_0  ) =  
(\Hs_-,\Hs,\Hs_+,\Ga_0,\Ga_1) \text{ of \eqref{e:mTf},}
\] 
we obtain the corresponding $\Ga_0$--to--$\Ga_1$ map $M_{\NtD}: \rho (\A^\Nr)\to \Lc (H^{-1/2} (\pa \D), H^{1/2} (\pa \D)) $.
The dual m-boundary tuple  
$\Tc  = (\Hs_+,\Hs,\Hs_-, \wt \Ga_0, \wt \Ga_1) =  (\Hs_+,\Hs,\Hs_-, \ii \Ga_1, (-\ii) \Ga_0)$ and Proposition \ref{p:WeylF} produce the $\wt \Ga_0$--to--$\wt \Ga_1$ map $M_{\DtN}: \rho (\A^\Dr) \to \Lc (H^{1/2} (\pa \D), H^{-1/2} (\pa \D))  $, which corresponds to $\wt M$ of \eqref{e:M=MV2} and to the DtN-map of Remark \ref{r:DtN}. Clearly, $M_{\DtN} (z) = - (M_{\NtD} (z))^{-1}$  for all $z \in  \rho (\A^\Nr) \cap \rho (\A^\Dr) $. This completes the proof of Theorem \ref{t:NtD}.
\end{proof}

\subsection{Krein resolvent formulae and the proof of Theorem \ref{t:aKrF}}
\label{s:KreinF}

Let  $\Ao_\Th$ be an intermediate extension of $\Ao$ defined by \eqref{e:ATh} with the use of a linear relation $\Th \subseteq \Hs_{-,+} \oplus \Hs_{+,-}$.
Translating \eqref{e:ATh} into the settings of the boundary triple $\Tf^W$
of Lemma \ref{l:BT}, we see that 
$\dom \Ao_\Th 
= \left\{ u \in \dom \Ao^* \ : \ \{ W^{-1} \Ga_0 u , W^\cross \Ga_1 u \} \in \Th_W \right\},$
where  
\begin{gather} \label{e:ThW}
\Th_W := 
\left\{\  \{W^{-1} h_{-,+}, W^\cross h_{+,-} \} \ : \ \{ h_{-,+},h_{+,-}\} \in \Th \ \right\}
\text{ is a linear relation in $\Hs$.}
 \end{gather}

By \cite[Proposition 1.6]{DM95}, 
for $z \in \rho (\wh \Ao_0)$, one has $ z \in \rho (\Ao_\Th) $ 
exactly when  $0 \in \rho (\Th_W - M_W (z))$,
or, equivalently,
exactly when $(\Th - \M (z))^{-1} \in \Lc (\Hs_{+,-}, \Hs_{-,+})$.
Here $\Th_W - M_W (z)$, $\Th - \M (z)$, and the resolvent set $\rho (\Th_W - M_W (z))$ are understood  in the sense of the calculus of linear relations, which also allows one to express $\rho (\Th_W - M_W (z))$ and the inverse linear relation $(\Th_W - M_W (z))^{-1}$ in terms of equalities with operator coefficients (see \cite{DM95,DM17,BHdS20}). 
We use such a transition from linear relations to equalities with operator coefficients for the generalized Krein formula \cite[formula (3.39)]{DM95}, which for the boundary triple $\Tf^W$ and $z \in \rho ( \Ao_\Th) \cap \rho (\wh \Ao_0)$ can be  written as 
\begin{align} 
( [ \Ao_\Th  - z ]^{-1} f | g)_\Xf  - & ([\wh \Ao_0 - z ]^{-1} f | g)_\Xf 
 \label{e:KFTh0}
 \\
 & \quad = 
  \left( [\Th_W -M_W (z)]^{-1} W^\cross  \Ga_1 [\wh \Ao_0-z   ]^{-1} f \mid W^\cross \Ga_1 [\wh \Ao_0 - \overline{z}]^{-1} g\right)_\Hs \label{e:KFTh1}
\\ & \quad   =  \left\< [\Th -M (z)]^{-1} \Ga_1 [\wh \Ao_0-z   ]^{-1} f \mid \Ga_1 [\wh \Ao_0 - \overline{z}]^{-1} g\right\>_\Hs  .
\label{e:KFTh2}
\end{align} 

In the particular case where $\Th_W$ is a graph $\Gr E$ of an operator $E:\dom E \subseteq \Hs \to \Hs$, one sees from \cite{DM91,DM17} that $\rho (E - M_W (z)) = \rho (\Th_W - M_W (z))$ and 
\begin{gather*} 
\text{$(\Th_W - M_W (z))^{-1} =(E - M_W (z))^{-1}$ for all $z \in \rho (\wh \Ao_0) $ such that $0 \in \rho (E - M_W (z))$.} 
\end{gather*}
In a more general case, where a closed restriction $\Ao_\Th$ of $\Ao^*$ is defined by 
an abstract boundary condition 
\begin{gather} \label{e:B0B1}
\text{$ E_0 \Ga^W_0 f + E_1 \Ga^W_1 f =0$ with    
$E_0, E_1 \in \Lc (\Hs)$ such that $0 \in \rho (E_0 E_0^* + E_1 E_1^*)$},
\end{gather}
\cite[Corollary 7.44]{DM17} implies that  
\begin{gather} \label{e:B0B1rho}
\rho (\Ao_\Th) \cap \rho (\wh \Ao_0) = \{ z \in \rho (\wh \Ao_0) : 0 \in \rho (E_0 +E_1 M_W (z)) \} \quad \text{ (see also \cite{DM95,BHdS20}).}
\end{gather}
It follows from 
\cite[Corollary 7.103]{DM17} that formula \eqref{e:KFTh0}-\eqref{e:KFTh1} 
can be written  as 
\begin{multline} \label{e:KF}
 ( [\wh \Ao_0 - z ]^{-1} f | g)_\Xf - ( [\Ao_\Th - z ]^{-1} f | g)_\Xf \\
 =  \left( [E_0 +  E_1 M_W (z)]^{-1} E_1 W^\cross \Ga_1 [\wh \Ao_0-z ]^{-1} f \mid W^\cross \Ga_1 [\wh \Ao_0- \overline{z}  ]^{-1}g\right)_\Hs .
\end{multline}

We apply now these formulae to the abstract boundary condition 
\begin{gather} \label{e:tK+IGa}
(\wt K+I_\Hs) W^{-1} \Ga_0 f + \ii (\wt K-I_\Hs) W^\cross  \Ga_1 f = 0 .
\end{gather}

\begin{prop}[abstract Krein's resolvent formula] \label{p:KrFabst}
Let $\wt K$ be  a certain contraction  in $\Hs$, and let $\wh \Ao$ be the m-dissipative restriction of $\Ao^*$ defined by \eqref{e:tK+IGa}.
Then:
\item[(i)] A complex number $z \in \rho (\wh \Ao_0) $ belongs to $ \rho (\wh \Ao)$ if and only if 
\begin{gather}
\text{$E_0 W^{-1}   + E_1 W^\cross M ( z) \in \Hom (\Hs_{-,+}, \Hs) $, where 
$ E_0 = \wt K+I_\Hs$ and $ E_1 = \ii (\wt K-I_\Hs) $.} \label{e:E0E1abs}
\end{gather}
\item[(ii)] For $z \in \rho (\wh \Ao)  \cap \rho (\wh \Ao_0) $ and arbitrary $f,g \in \Xf$, the following formula holds
\begin{multline*} 
( [\wh \Ao_0 - z ]^{-1} f | g )_\Xf - ( [\wh \Ao - z ]^{-1} f | g )_\Xf 
 \\
 = \< [ E_0 W^{-1} + E_1  W^\cross M (z) ]^{-1}E_1 W^\cross \Ga_1 [\wh \Ao_0 - z   ]^{-1} f \mid   \Ga_1  [\wh \Ao_0 - \overline{z}  ]^{-1} g \>_\Hs .
\end{multline*}

\end{prop}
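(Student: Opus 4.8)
The plan is to translate the abstract boundary condition \eqref{e:tK+IGa} into the language of the boundary triple $\Tf^W = (\Hs, \Ga_0^W, \Ga_1^W) = (\Hs, W^{-1}\Ga_0, W^\cross\Ga_1)$ of Lemma \ref{l:BT}, and then to invoke the abstract resolvent results \eqref{e:B0B1rho} and \eqref{e:KF} recorded above. Putting $E_0 := \wt K + I_\Hs$ and $E_1 := \ii(\wt K - I_\Hs)$, one sees that \eqref{e:tK+IGa} is precisely the condition $E_0\Ga_0^W f + E_1\Ga_1^W f = 0$, so that the m-dissipative restriction $\wh\Ao$ of $\Ao^*$ furnished by Proposition \ref{p:absM-dis} is the extension $\Ao_\Th$ associated, through \eqref{e:ATh} and \eqref{e:ThW}, with the linear relation $\Th_W = \{\{h_0,h_1\}\in\Hs\oplus\Hs : E_0 h_0 + E_1 h_1 = 0\}$ in $\Hs$ (the surjectivity of $\Ga$ in Definition \ref{d:MBT} supplies the inclusion $\supseteq$ in this description of $\Th_W$), while $\wh\Ao_0$ coincides with the selfadjoint extension $\Ao^*\uph_{\ker\Ga_0}$ of \eqref{e:whA01}. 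To be entitled to use \eqref{e:B0B1rho} and \eqref{e:KF} I first check that \eqref{e:tK+IGa} falls into the admissible class \eqref{e:B0B1}: a direct computation gives $E_0 E_0^* + E_1 E_1^* = (\wt K+I)(\wt K^*+I) + (\wt K-I)(\wt K^*-I) = 2(\wt K\wt K^* + I_\Hs) \ge 2 I_\Hs$, hence $0 \in \rho(E_0 E_0^* + E_1 E_1^*)$.

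To prove (i), I apply \eqref{e:B0B1rho}, which yields $\rho(\wh\Ao)\cap\rho(\wh\Ao_0) = \{z\in\rho(\wh\Ao_0) : 0\in\rho(E_0 + E_1 M_W(z))\}$. By the definition \eqref{e:M=MV1} of the $\Ga_0$-to-$\Ga_1$ map one has $W^\cross M(z) = M_W(z) W^{-1}$, whence $E_0 W^{-1} + E_1 W^\cross M(z) = (E_0 + E_1 M_W(z)) W^{-1}$; since $W^{-1}\in\Hom(\Hs_{-,+},\Hs)$, the membership $E_0 W^{-1} + E_1 W^\cross M(z)\in\Hom(\Hs_{-,+},\Hs)$ is equivalent to $E_0 + E_1 M_W(z)\in\Hom(\Hs)$, i.e. to $0\in\rho(E_0 + E_1 M_W(z))$. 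Combined with the displayed description of $\rho(\wh\Ao)\cap\rho(\wh\Ao_0)$, this gives exactly the asserted equivalence \eqref{e:E0E1abs}.

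To prove (ii), I start, for $z\in\rho(\wh\Ao)\cap\rho(\wh\Ao_0)$ and $f,g\in\Xf$, from formula \eqref{e:KF} applied to $\Tf^W$, to the pair $E_0, E_1$ above, and to $\Ao_\Th = \wh\Ao$:
\begin{multline*}
([\wh\Ao_0 - z]^{-1} f \mid g)_\Xf - ([\wh\Ao - z]^{-1} f \mid g)_\Xf \\
= \left( [E_0 + E_1 M_W(z)]^{-1} E_1 W^\cross\Ga_1 [\wh\Ao_0 - z]^{-1} f \mid W^\cross\Ga_1 [\wh\Ao_0 - \overline{z}]^{-1} g \right)_\Hs .
\end{multline*}
Using $[E_0 W^{-1} + E_1 W^\cross M(z)]^{-1} = W[E_0 + E_1 M_W(z)]^{-1}$ (from the identity of the previous paragraph) together with the defining property of the $\cross$-adjoint from Remark \ref{r:HsAdjoint}, namely $\<Wh\mid k\>_\Hs = (h\mid W^\cross k)_\Hs$ for $h\in\Hs$ and $k\in\Hs_{+,-}$, applied with $h = [E_0 + E_1 M_W(z)]^{-1} E_1 W^\cross\Ga_1[\wh\Ao_0 - z]^{-1} f$ and $k = \Ga_1[\wh\Ao_0 - \overline{z}]^{-1} g$, I rewrite the right-hand side above as $\<[E_0 W^{-1} + E_1 W^\cross M(z)]^{-1} E_1 W^\cross\Ga_1[\wh\Ao_0 - z]^{-1} f \mid \Ga_1[\wh\Ao_0 - \overline{z}]^{-1} g\>_\Hs$, which is the claimed formula.

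The essential content is already packaged in the Derkach--Malamud formulae \eqref{e:B0B1rho}, \eqref{e:KF} and in Lemma \ref{l:BT}, so I do not anticipate a genuine obstacle; the only points demanding care are the verification that \eqref{e:tK+IGa} lands in the admissible class \eqref{e:B0B1} (the computation $E_0 E_0^* + E_1 E_1^* = 2(\wt K\wt K^* + I_\Hs)$) and the correct bookkeeping of the $\cross$-adjoint $W^\cross$ and of the $\Hs$-pairing $\<\cdot,\cdot\>_\Hs$ of $\Hs_{-,+}$ with $\Hs_{+,-}$ when converting the inner-product form of \eqref{e:KF} into the pairing form of the statement.
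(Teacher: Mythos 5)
Your proposal is correct and follows essentially the same route as the paper's own proof: both translate \eqref{e:tK+IGa} into the boundary triple $\Tf^W$ with $E_0=\wt K+I_\Hs$, $E_1=\ii(\wt K-I_\Hs)$, verify the admissibility condition via $E_0E_0^*+E_1E_1^*=2\wt K\wt K^*+2I\ge 2I$, and then invoke the Derkach--Malamud formulae \eqref{e:B0B1rho} and \eqref{e:KF}, converting between $M_W$ and $M$ via $M_W(z)=W^\cross M(z)W$ and Remark \ref{r:HsAdjoint}. Your write-up is in fact slightly more explicit than the paper's on the bookkeeping for statement (ii), which the paper dispatches with ``similarly.''
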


\begin{proof}
With $E_0$ and $E_1$ defined as in \eqref{e:E0E1abs}, condition \eqref{e:tK+IGa} takes the form 
$E_0 W^{-1} \Ga_0 + E_1 W^\cross \Ga_1 f =0$ as in \eqref{e:B0B1}. The operator $\wh \Ao = \Ao_\Th$  corresponds to a linear relation $\Th \subseteq \Hs_{-,+} \oplus \Hs_{+,-}$ that is connected by \eqref{e:ThW} with the linear relation $\Th_W \subseteq \Hs^2 $ of the form 
%
$\Th_W = \left\{ \{h_0, h_1\} \in \Hs^2 : E_0 h_0 + E_1 h_1 =0 \right\}$.
In order to apply \eqref{e:B0B1rho} and \eqref{e:KF}, one has to check only the assumption  
 $0 \in \rho (E_0 E_0^* + E_1 E_1^*)$ in \eqref{e:B0B1}. Since $
 E_0 E_0^* + E_1 E_1^* =  2 \wt K \wt K^* + 2 I \ge 2 I $,  the assumption  $0 \in \rho (E_0 E_0^* + E_1 E_1^*)$ is fulfilled.
Thus, \eqref{e:B0B1rho} and \eqref{e:KF} hold true. 

Note that $0 \in \rho (E_0 +E_1 M_W (z))$ in \eqref{e:B0B1rho} is equivalent to $E_0 +E_1 M_W (z) \in \Hom (\Hs) $, which we transform into  statement (i) using Remark  \ref{r:HsAdjoint} and 
the formula $M_W (z  )=W^\cross M (z) W $. Similarly, \eqref{e:KF} implies statement (ii).
\end{proof}

\begin{proof}[Proof of Theorem \ref{t:aKrF}]
In the settings of Section \ref{s:RBC}, $V \in \Hom (L^2 (\pa \D), H^{1/2} (\pa \D))$.
We apply Proposition \ref{p:KrFabst} to the m-boundary tuple 
\[
\Tc_*  =(H^{-1/2} (\pa \D), L^2 (\pa \D) , H^{1/2} (\pa \D),  \wh \gan, (- \ii)\wh \ga_0  ) =  
(\Hs_-,\Hs,\Hs_+,\Ga_0,\Ga_1) 
\] 
and to the linear homeomorphism $W = (V^\cross)^{-1} \in \Hom (L^2 (\pa \D), H^{-1/2} (\pa \D))$.
The boundary condition \eqref{e:mdisBC} associated with the m-dissipative acoustic operator $\wh \A$ 
takes the form 
\[
\ii (K+I_{L^2 (\pa \D)}) W^\cross \Ga_1 \Psi  + (K-I_{L^2 (\pa \D)}) W^{-1} \Ga_0 \Psi = 0 
\]
and corresponds to the abstract boundary condition 
\eqref{e:tK+IGa} with $\wt K = - K$.  Applying Proposition \ref{p:KrFabst},
we complete the proof of Theorem \ref{t:aKrF}.
\end{proof}

\section{Random acoustic operators and the proof of Theorem \ref{t:randM-dis}\label{s:RAOp}}
\label{s:RandomProof}

\subsection{Random bounded and random-m-dissipative operators}
\label{s:ROprelim}

Here we collect some facts about various types of random operators in a separable Hilbert space $\Xf$.  
The definition of random bounded operator in $\Xf$ (see Section \ref{s:RBC}) implies that
\begin{equation}  \label{e:RT*}
\text{$T$ is a random bounded operator if and only if $T^*$ is so.}
\end{equation}
Similarly, if $T$ and $S$ are random bounded operators, then $T+S$ is a random bounded operator.

\begin{lem}[e.g., \cite{S84}] \label{l:prodRB}
A product of two random bounded operators is also random bounded.
\end{lem}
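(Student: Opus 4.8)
The plan is to use the separability of $\Xf$ to reduce the claim to elementary closure properties of scalar random variables. I would fix an orthonormal basis $\{e_n\}_{n\in\NN}$ of $\Xf$ and fix $f,g\in\Xf$. Let $T$ and $S$ be defined and a.s. bounded, say on events $\Om_1,\Om_2 \in \Fc$ of probability $1$; their intersection $\Om_0 = \Om_1 \cap \Om_2$ is again of probability $1$, and for $\om \in \Om_0$ the operator $T_\om S_\om$ is bounded with $\|T_\om S_\om\| \le \|T_\om\|\,\|S_\om\|$. For such $\om$, applying the continuity of $T_\om$ to the norm-convergent expansion $S_\om f = \sum_{n}(S_\om f|e_n)_\Xf\, e_n$ shows that the partial sums $\sum_{n=1}^{N}(S_\om f|e_n)_\Xf\, T_\om e_n$ converge to $T_\om S_\om f$ in $\Xf$; pairing with $g$ and using the continuity of the inner product then gives
\[
(T_\om S_\om f|g)_\Xf \;=\; \lim_{N\to\infty}\ \sum_{n=1}^{N}(S_\om f|e_n)_\Xf\,(T_\om e_n|g)_\Xf .
\]

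From this identity the measurability is read off directly. Since $S$ is a random bounded operator, each $\om\mapsto(S_\om f|e_n)_\Xf$ is a random variable; since $T$ is a random bounded operator, each $\om\mapsto(T_\om e_n|g)_\Xf$ is a random variable; hence every summand $(Sf|e_n)_\Xf\,(Te_n|g)_\Xf$, being a product of two random variables, is a random variable, and so is every partial sum. The right-hand side above is therefore a pointwise limit (on the full-measure set $\Om_0$) of random variables, and completeness of $(\Om,\Fc,\PP)$ makes this limit a random variable. As $f,g\in\Xf$ were arbitrary and $T_\om S_\om\in\Lc(\Xf)$ for all $\om\in\Om_0$, the function $TS:\om\mapsto T_\om S_\om$ is a random bounded operator in $\Xf$, which is the assertion.

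The only point I expect to require any care — the remainder being routine stability of measurability under products, finite sums, and a.s.\ limits — is to perform the basis expansion on a single fixed event of probability $1$ on which \emph{both} $T_\om$ and $S_\om$ are genuinely bounded operators, so that the series is meaningful and convergent, and then to invoke completeness of the probability space to guarantee that the resulting a.s.-defined limit is $\Fc$-measurable. An equivalent route, using \eqref{e:RT*}, would be to write $(T_\om S_\om f|g)_\Xf=(S_\om f|T_\om^* g)_\Xf$ and expand $S_\om f$ in the basis, which yields the same conclusion.
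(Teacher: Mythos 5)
Your proof is correct. Note that the paper itself gives no argument for this lemma at all: it is stated with a pointer to Skorokhod's book \cite{S84}, so there is no "paper proof" to compare against. Your basis-expansion argument is the standard self-contained one: the identity
\[
(T_\om S_\om f\mid g)_\Xf=\lim_{N\to\infty}\sum_{n=1}^{N}(S_\om f\mid e_n)_\Xf\,(T_\om e_n\mid g)_\Xf
\]
is justified exactly as you say (norm convergence of the partial sums of $S_\om f$, boundedness of $T_\om$, continuity of the inner product), each summand is a product of random variables by the weak-measurability definition of a random bounded operator used in the paper, and the a.s.\ pointwise limit is measurable thanks to the standing assumption that $(\Om,\Fc,\PP)$ is complete and $\Xf$ is separable. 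You also correctly identified the only delicate points: working on a single fixed full-measure event where both operators are genuinely bounded, and invoking completeness for the a.s.-defined limit. Nothing is missing.
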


\begin{lem} \label{l:CompR}
If $T$ is a random bounded operator, $\Om_{\Sf_\infty}:= \{\om \in \Om : T \in \Sf_\infty \} $ is an event.
\end{lem}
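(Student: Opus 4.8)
\textbf{Proof proposal for Lemma \ref{l:CompR}.}

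The plan is to exhibit $\Om_{\Sf_\infty}$ as a countable combination of events, using a fixed orthonormal basis to reduce the compactness criterion to measurable scalar quantities. First I would fix a countable orthonormal basis $\{e_j\}_{j \in \NN}$ of the separable Hilbert space $\Xf$, and for each $n \in \NN$ let $P_n$ be the orthogonal projection onto $\Span\{e_1,\dots,e_n\}$. Since $T$ is a random bounded operator, $(Te_k | e_j)_{\Xf}$ is a random variable for every $j,k$, and hence $T(I-P_n)$ is a random bounded operator for each $n$; by \eqref{e:|T|}, $\|T(I-P_n)\|$ is an $\overline{\RR}_+$-valued random variable. The key observation is the standard characterization: a bounded operator $T$ on a separable Hilbert space is compact if and only if $\|T(I-P_n)\| \to 0$ as $n \to \infty$ (one direction is immediate since $T(I-P_n)$ has finite rank approximations; the converse is the fact that compact operators are norm-limits of finite-rank operators, combined with $\|P_n S - S\| \to 0$ for finite-rank $S$).

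Given this, I would write
\[
\Om_{\Sf_\infty} = \bigcap_{m \in \NN} \bigcup_{n \in \NN} \left\{ \om \in \Om : \| T_\om (I - P_n) \| \le \tfrac1m \right\}.
\]
Each set $\{\om : \|T_\om(I-P_n)\| \le 1/m\}$ is an event because $\|T(I-P_n)\|$ is a random variable, and countable unions and intersections of events are events; since $(\Om,\Fc,\PP)$ is complete, this suffices. The main (and essentially only) obstacle is to justify carefully that $T(I-P_n)$ is indeed a random bounded operator for each fixed $n$ — but this is immediate from \eqref{e:RT*} and the remark following it (sums of random bounded operators are random bounded, and $TP_n = \sum_{k=1}^{n} T e_k \otimes e_k$ has matrix entries $(TP_n e_i | e_j) = (T e_i | e_j)\one_{i \le n}$ which are random variables), together with Lemma \ref{l:prodRB} if one prefers to view $P_n$ as a deterministic bounded operator and use the product rule. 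Once $\|T(I-P_n)\|$ is seen to be a random variable via \eqref{e:|T|}, the displayed set identity completes the argument.
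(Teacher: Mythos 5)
Your proof is correct, but it takes a somewhat different route from the paper's. The paper fixes a countable set $\Ff$ of finite-rank operators $\sum c_{j,k}\, f_j \otimes f_k$ with rational coefficients built from a dense sequence $\{f_j\}$, notes that $\Ff$ is dense in $\Sf_\infty$, and writes $\Om_{\Sf_\infty} = \{\de = 0\}$ where $\de = \inf_{Q \in \Ff}\|T - Q\|$ is a countable infimum of random variables (each $\|T-Q\|$ being measurable by \eqref{e:|T|}). You instead approximate $T_\om$ by its own truncations $T_\om P_n$ and use the tail-projection criterion $T \in \Sf_\infty \iff \|T(I-P_n)\| \to 0$, which reduces measurability to the countably many random variables $\|T(I-P_n)\|$ via Lemma \ref{l:prodRB} and \eqref{e:|T|}. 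Both arguments rest on the same two pillars (a countable norm test for compactness plus the measurability of $\|\cdot\|$ for random bounded operators); yours trades the density argument for the product lemma and the (standard, and correctly monotone) fact that $\|T(I-P_n)\|$ decreases to $0$ exactly for compact $T$. One cosmetic point: in justifying the converse direction of your criterion you invoke $\|P_n S - S\|\to 0$ for finite-rank $S$, whereas what is actually needed is $\|S(I-P_n)\|\to 0$; this follows by passing to adjoints, $\|S(I-P_n)\| = \|(I-P_n)S^*\|$, with $S^*$ again finite rank, so the gap is only notational. Your set identity and the appeal to completeness of $(\Om,\Fc,\PP)$ for the null set where $T_\om$ is undefined are both fine.
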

\begin{proof}
Let $\{f_j\}_{j \in \NN}$ be a dense subset of $\Xf$.  The countable set $\Ff$ of all finite linear combinations of the form $\sum c_{j,k} f_j \otimes f_k$ with coefficients $c_{j,k} \in \QQ$  is a dense subset in the set of all finite-rank operators. So, $\Ff$ is dense in the ideal  $\Sf_\infty$ of compact operators. Since $\de:= \inf_{Q \in \Ff} \| T - Q\|$ is a random variable, one see that $\Om_{\Sf_\infty} = \{ \om \in \Om : \de = 0\} \in \Fc$, i.e., that $\Om_{\Sf_\infty}$ is an event.
\end{proof}

\begin{prop}[Hanš theorem, e.g., \cite{BR72}] \label{p:HTh}
Let $T: \om \mapsto T_\om$ be a random bounded operator such that $T_\om \in \Hom (\Xf)$  with probability 1. Then $T^{-1}$ is a random bounded  operator. 
\end{prop}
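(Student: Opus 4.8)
The plan is to reduce the claim to the case of a positive selfadjoint operator and then to realize the inverse as an absolutely convergent integral of operator exponentials, each of which is a norm-limit of polynomials in a random bounded operator.

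First I would record an elementary algebraic reduction. Since $T_\om \in \Hom (\Xf)$ a.s., also $T_\om^* \in \Hom (\Xf)$ a.s., so $S_\om := T_\om T_\om^*$ is a.s. a positive selfadjoint operator with bounded inverse, and $T_\om^{-1} = T_\om^* S_\om^{-1}$. By \eqref{e:RT*} and Lemma \ref{l:prodRB}, $S$ is a random bounded operator. Hence it suffices to prove that $S^{-1}$ is a random bounded operator whenever $S$ is a random bounded operator that is a.s. positive selfadjoint with a bounded inverse; once this is established, $T^{-1} = T^* S^{-1}$ is random bounded again by \eqref{e:RT*} and Lemma \ref{l:prodRB}. (On the null event where $T_\om$ fails to be invertible, set $T_\om^{-1} := 0$.)

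Second, I would approximate $S^{-1}$ by resolvents of the form $(S + \tfrac1n I)^{-1}$. For a bounded $S \ge 0$ one has the norm-convergent integral representation $(S + \tfrac1n I)^{-1} = \int_0^\infty \ee^{-t/n}\,\ee^{-tS}\,\dd t$, because $\int_0^R \ee^{-t(S + 1/n)}\,\dd t = (S + \tfrac1n I)^{-1}(I - \ee^{-R(S + 1/n)})$ and $\|\ee^{-R(S + 1/n)}\| \le \ee^{-R/n} \to 0$. For each fixed $t \ge 0$, the operator $\ee^{-tS} = \sum_{k\ge 0} \frac{(-t)^k}{k!} S^k$ is, for every $\om$, the norm-limit of the polynomials $P_N(t) := \sum_{k=0}^N \frac{(-t)^k}{k!} S^k$ in the random bounded operator $S$; each $P_N(t)$ is a random bounded operator (finite sums and products, using \eqref{e:RT*} and Lemma \ref{l:prodRB}), so $(\ee^{-tS} f \mid g)_{\Xf} = \lim_N (P_N(t) f \mid g)_{\Xf}$ is a random variable, i.e.\ $\ee^{-tS}$ is a random bounded operator for each $t \ge 0$. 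Since $\|\ee^{-tS}\| \le 1$ and $t \mapsto (\ee^{-tS} f \mid g)_{\Xf}$ is continuous for each $\om$, the map $(t,\om) \mapsto (\ee^{-t S_\om} f \mid g)_{\Xf}$ is a Carath\'{e}odory function, hence jointly measurable, and $\om \mapsto \int_0^\infty \ee^{-t/n} (\ee^{-t S_\om} f \mid g)_{\Xf}\,\dd t$ is measurable. As this integral equals $((S + \tfrac1n I)^{-1} f \mid g)_{\Xf}$, the operator $(S + \tfrac1n I)^{-1}$ is random bounded.

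Third, I would pass to the limit $n \to \infty$. Since $S_\om$ is a.s.\ positive selfadjoint with bounded inverse, $\sigma(S_\om) \subseteq [\vep_\om, \|S_\om\|]$ with $\vep_\om := \|S_\om^{-1}\|^{-1} > 0$, so by the spectral theorem $\|(S_\om + \tfrac1n I)^{-1} - S_\om^{-1}\| \le \tfrac{1}{n \vep_\om^2} \to 0$ a.s. Therefore $(S^{-1} f \mid g)_{\Xf} = \lim_n ((S + \tfrac1n I)^{-1} f \mid g)_{\Xf}$ a.s., a pointwise limit of random variables, hence a random variable, which shows that $S^{-1}$ is a random bounded operator. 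Together with the reduction of the first step, this proves the proposition. The only delicate point is the measurability bookkeeping in the second step: one must check that $(t,\om) \mapsto (\ee^{-tS_\om} f \mid g)_{\Xf}$ is jointly measurable so that the $t$-integral may be handled by Fubini's theorem, and this has to be done with no control on $\|S_\om\|$, which is precisely why the integral representation is used in place of a direct polynomial approximation of $x^{-1}$.
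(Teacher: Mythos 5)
The paper does not actually prove Proposition \ref{p:HTh}: it is quoted from the literature, with the remark immediately after it pointing to \cite[Theorem 2.15]{BR72} and noting that only the simple separable-Hilbert-space case is needed. Your argument is, as far as I can check, a correct and complete self-contained proof of that case, and it is genuinely different from the classical route. You exploit the Hilbert-space structure: the reduction $T^{-1}=T^*(TT^*)^{-1}$ via \eqref{e:RT*} and Lemma \ref{l:prodRB}, the Laplace-transform representation $(S+\tfrac1n I)^{-1}=\int_0^\infty \ee^{-t/n}\ee^{-tS}\,\dd t$ with $\ee^{-tS}$ obtained as a pointwise norm-limit of polynomials in the random bounded operator $S$, the Carath\'eodory/Fubini step for the $t$-integral, and the spectral-theorem estimate $\|(S+\tfrac1n I)^{-1}-S^{-1}\|\le (n\vep_\om^2)^{-1}$ for the final a.s.\ limit; all of these steps are sound (the a.s.\ pointwise limits of random variables are again random variables since $(\Om,\Fc,\PP)$ is complete). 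By contrast, the Han\v{s} theorem as cited is proved for separable Banach spaces, typically by covering the open group of invertible operators with countably many balls and inverting by Neumann series, so it is strictly more general but relies on separability in an essential way; your proof trades that generality for an elementary argument in which separability enters only through the cited product lemma \ref{l:prodRB}. One cosmetic point: you could shorten the second step by observing that the integral $\int_0^\infty \ee^{-t/n}(\ee^{-tS_\om}f\mid g)\,\dd t$ is a pointwise limit of Riemann sums of random variables (the integrand being continuous in $t$ and dominated by $\ee^{-t/n}\|f\|\,\|g\|$), which avoids invoking joint measurability altogether.
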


The Hanš result establishes the randomness of $T^{-1}$ in a more general situation (see, e.g.,  \cite[Theorem 2.15]{BR72}). We need only a simple case of a separable Hilbert space $\Xf$.

\begin{rem} \label{r:RandSADis}  
Recall that random-selfadjoint and random-m-dissipative operators are defined in Section \ref{s:RBC}.
Let $T: \om \mapsto T_\om$ be an operator-valued function such that $T_\om : \dom T_\om \subseteq \Xf \to \Xf$ is a selfadjoint operator in $\Xf$ with probability 1. 
Then $T$ is random-selfadjoint if and only if $T$ is random-m-dissipative.
This follows from $(T-z)^{-1} = ((T-\overline{z})^{-1})^*$ and  \eqref{e:RT*}. 
\end{rem}

\subsection{Proof of Theorem \ref{t:randM-dis}}
\label{s:RandProof}

We prove statement (i) of Theorem \ref{t:randM-dis}. Then statement (ii) follows immediately from the last 
statement of Theorem \ref{t:AM-dis}  and Remark \ref{r:RandSADis}.

Let $V$ be a certain fixed deterministic linear homeomorphism from $L^2 (\pa \D)$ to $H^{1/2} (\pa \D)$. Given a certain random contraction $K$ in $L^2 (\pa \D)$, we define using Definition \ref{d:bc} 
the randomized acoustic operator $\wh \A $ associated with the boundary condition \eqref{e:mdisBCR}.
By Theorem \ref{t:AM-dis}, $\wh A$ is a.s. m-dissipative.
We have to prove that $\wh \A$ satisfies Definition \ref{d:rmDis} (of random-m-dissipative operator), i.e., to prove that 
$(\wh A - z)^{-1}$ is a random bounded operator for $z \in \CC_+$.

\emph{Step 1.} Theorem \ref{t:aKrF} implies that, in order to prove Theorem \ref{t:randM-dis}, it is enough to prove for every $z \in \CC_+$
  \begin{gather} 
 \text{that $E_1 V^{-1} M_\NtD (z) (V^\cross)^{-1}+ E_0 \in  \Hom ( L^2 (\pa \D))$ with probability 1}, \label{e:Inv} \\
\text{and that $[E_1 V^{-1} M_\NtD (z) (V^\cross)^{-1}+ E_0 ]^{-1} E_1$
 is a random bounded operator,} \label{e:RBproof}
\end{gather}
where $E_0$ and $E_1$ are random bounded operators in $L^2 (\pa \D)$ defined by $ E_0 := I - K$ and $ E_1 := -\ii (I+K) $. 
Indeed, the selfadjoint acoustic operator $\A^\Nr$ associated with the Neumann boundary condition $\gan (\vbf) =0$ is deterministic. Rewriting 
 \eqref{e:KFDtN} with the use of the operator $(V^\cross)^{-1} = (V^{-1})^\cross \in \Hom (L^2 (\pa \D),H^{-1/2} (\pa \D))$, one sees that it is enough to prove that  
\begin{gather}
\left( [E_1 V^{-1} M_{\NtD} (z) (V^\cross)^{-1}+ E_0 ]^{-1} E_1 V^{-1} 
 \ga_\Dr (z) \Psi  \bigm\vert  V^{-1}  \ga_\Dr (\overline{z}) \Phi \right)_{L^2 (\pa \D)} \label{e:RightPart}
 \end{gather}
is a random variable for any deterministic $\Psi$ and $\Phi$.
Note that, by Theorem \ref{t:NtD}, the NtD map $M_{\NtD} (z)$ is a linear homeomorphism from 
$H^{-1/2} (\pa \D)$ to $ H^{1/2} (\pa \D)$ for all $z \not \in \RR$, and so,
$V^{-1} M_{\NtD} (z) (V^\cross)^{-1} \in \Hom ( L^2 (\pa \D))$.
Hence, \eqref{e:RightPart} is a random variable whenever the conditions \eqref{e:Inv}-\eqref{e:RBproof} are satisfied.

\emph{Step 2.} Let us fix an arbitrary $z \in \CC_+$ and verify the conditions \eqref{e:Inv}-\eqref{e:RBproof}.
Since $\wh \A$ is a.s. an m-dissipative operator, we have $\CC_+ \subseteq \rho (\wh \A )$ with probability 1. Since $\CC \setminus \RR \subseteq \rho (\A^\Nr)$, it follows from Theorem \ref{t:aKrF} (i)  that 
$ E_1 V^{-1} M_{\NtD} (z) + E_0 V^\cross  \in \ \Hom (H^{-1/2} (\pa \D), L^2 (\pa \D))$ for a.a. $\om \in \Om$. Multiplying by $(V^\cross)^{-1}$
 from the right side, one gets \eqref{e:Inv}. That is, 
 $(E_1 V^{-1} M_{\NtD} (z) (V^\cross)^{-1}+ E_0 )^{-1} $ exists and is a bounded operator  with probability 1. 
Since $E_1 V^{-1} M_{\NtD} (z) (V^\cross)^{-1}+ E_0 $ is a random bounded operator,
the Hanš theorem (Proposition \ref{p:HTh}) implies that $(E_1 V^{-1} M_{\NtD} (z) (V^\cross)^{-1}+ E_0 )^{-1} $ is a random bounded operator. Its product with the random bounded operator $E_1$ is also a random bounded operator due to Lemma \ref{l:prodRB}. This implies  \eqref{e:RBproof} and completes the proof of Theorem \ref{t:randM-dis}.


\section{Reducing subspaces and discrete spectra}
\label{s:DiscSp}

As before, we assume \eqref{e:BT1}--\eqref{e:BT2} and fix certain linear homeomorphism  $W \in \Hom (\Hs, \Hs_{-,+})$.

 A closed subspace $\Xf_1$ of a Hilbert space $\Xf$ is called an \emph{invariant subspace} of an operator $T:\dom T \subseteq \Xf \to \Xf$ if $Tf \in \Xf_1$ for every $f \in \dom (T) \cap \Xf_1$. In this case, the operator $T$ restricted to 
$\dom  (T |_{\Xf_1}) := \dom (T) \cap \Xf_1 $ generates in $\Xf_1$ an operator $T |_{\Xf_1} : \dom (T |_{\Xf_1})  \subseteq \Xf_1 \to \Xf_1$, which is called the \emph{part of $T$ in $\Xf_1$} \cite{AG}.

\subsection{Reducing subspaces, boundary tuples, and intermediate extensions}
\label{s:SAPart}

For a closed subspace $\Xf_1$ of  a Hilbert space $\Xf$, we denote by $\Po_{\Xf_1}$ the orthogonal projection on $\Xf_1$.
Assume that $\Xf_1$ and $\Xf_2$ are invariant subspaces of $T$ such that the orthogonal decomposition 
$\Xf = \Xf_1 \oplus \Xf_2$ takes place and $\Po_{\Xf_j} f \in \dom T$ for every $f \in \dom T$ and $j=1,2$. Then $\Xf_1$ and $\Xf_2$ are called \emph{reducing subspaces} of $T$, and one says that the decomposition $\Xf = \Xf_1 \oplus \Xf_2$ \emph{reduces} $T$ to the orthogonal sum of its parts $T = T  |_{\Xf_1} \oplus T  |_{\Xf_2}$ \cite{AG}.

\begin{lem}[e.g., \cite{AG,SFBK10}]\label{l:ReducingEig}
Assume that $T$ is an m-dissipative operator or a closed densely defined symmetric operator in $\Xf$.
Assume that $\la$ is a real eigenvalue of $T$. Then the eigenspace  
$\ker (T- \la I) := \{f \in \dom T : Tf = \la f\}$ is a reducing subspace of $\Xf$.
\end{lem}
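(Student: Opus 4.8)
The plan is to establish the reducing-subspace property for the eigenspace $\Nc := \ker(T - \la I)$ of a real eigenvalue $\la$ in two moves: first verify $\Nc$ is an invariant subspace of $T$ (which is immediate), and then verify that its orthogonal complement $\Nc^\perp$ is also invariant, which is where the hypotheses on $T$ (m-dissipative, or closed densely defined symmetric) and the reality of $\la$ come into play. By the definition of reducing subspace in Section \ref{s:SAPart}, it then suffices to check that $\Po_{\Nc} f \in \dom T$ for every $f \in \dom T$; but once both $\Nc$ and $\Nc^\perp$ are shown invariant, this follows by a standard argument, so the crux is the invariance of $\Nc^\perp$.

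\emph{First} I would note that $\Nc$ is a closed subspace (it is a kernel of a closed operator shifted by a scalar, since both an m-dissipative operator and a closed symmetric operator are closed) and that $T\Nc \subseteq \Nc$ trivially, since $Tf = \la f \in \Nc$ for $f \in \Nc$. \emph{Next}, for the complement: take $g \in \dom T \cap \Nc^\perp$ and $f \in \Nc$, so $Tf = \la f$. In the symmetric case one computes $(Tg \mid f)_\Xf = (g \mid Tf)_\Xf = \overline{\la}(g\mid f)_\Xf = \la (g \mid f)_\Xf = 0$ using $\la \in \RR$ and $g \perp f$; hence $Tg \in \Nc^\perp$. In the m-dissipative case one uses the fact (Phillips' criterion, cited via \eqref{e:CrM-dis1}) that $T$ is closed and maximal dissipative; the key point is that a real eigenvalue of a dissipative operator forces the corresponding eigenvectors to lie in the domain of $T^*$ with $T^*f = \la f$ as well. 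Concretely: if $Tf = \la f$ with $\la \in \RR$, then $\im (T h \mid h) \le 0$ for all $h \in \dom T$, and perturbing $h = g + tf$ around $f$ (for $g \in \dom T$, $t$ a small real or complex parameter) and using $\la \in \RR$ one derives $\im((Tg\mid f) - (g \mid \la f)) = 0$ for all $g \in \dom T$, i.e. $f \in \dom T^*$ and $T^* f = \la f$. Then for $g \in \dom T \cap \Nc^\perp$, $(Tg \mid f)_\Xf = (g \mid T^* f)_\Xf = \la (g \mid f)_\Xf = 0$, so again $Tg \in \Nc^\perp$.

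\emph{Then} I would invoke the standard lemma (the references \cite{AG,SFBK10} are cited precisely for this) that if $\Xf = \Xf_1 \oplus \Xf_2$ with both $\Xf_j$ invariant subspaces of $T$, and in addition $\dom T$ is such that the restrictions make sense, then the decomposition actually reduces $T$; in fact for $f \in \dom T$ one writes $f = \Po_{\Nc} f + \Po_{\Nc^\perp} f$ and shows $\Po_{\Nc} f \in \dom T$. Here one uses that $\Nc$ is finite- or infinite-dimensional but spanned by eigenvectors lying in $\dom T$: if $\{e_k\}$ is an orthonormal basis of $\Nc$ then $\Po_{\Nc} f = \sum_k (f \mid e_k)_\Xf e_k$, and since each $e_k \in \dom T$ with $T e_k = \la e_k$, the partial sums $s_n = \sum_{k \le n}(f\mid e_k) e_k$ satisfy $T s_n = \la s_n \to \la \Po_{\Nc} f$; as $s_n \to \Po_{\Nc} f$ and $T$ is closed, $\Po_{\Nc} f \in \dom T$ with $T \Po_{\Nc} f = \la \Po_{\Nc} f$. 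Consequently $\Po_{\Nc^\perp} f = f - \Po_{\Nc} f \in \dom T$ too, so the decomposition $\Xf = \Nc \oplus \Nc^\perp$ reduces $T$, which is exactly the claim that $\Nc$ is a reducing subspace.

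\emph{The main obstacle} I anticipate is the m-dissipative case: one must argue carefully that a real eigenvalue is "automatically reducing" — the dissipative inequality only controls the imaginary part of the numerical range, so one needs the perturbation argument $h = g + tf$ with $t$ ranging over a neighbourhood of $0$ in $\CC$ (not just $\RR$) to extract the full two-sided orthogonality $(Tg \mid f) = \la (g \mid f)$, equivalently $f \in \dom T^*$ with $T^* f = \overline{\la} f = \la f$. The symmetric case and the closedness/domain bookkeeping are routine; I would state the perturbation computation as the one genuine step and otherwise cite \cite{AG,SFBK10}.
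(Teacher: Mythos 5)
Your proposal is correct. The paper itself gives no argument for this lemma beyond the citation ``follows from \cite[Theorem 46.5]{AG} and \cite[Section IV.4]{SFBK10},'' so your self-contained proof supplies exactly the details the paper delegates to those references, and it does so along the standard route: invariance of $\Nc=\ker(T-\la I)$ is trivial; invariance of $\Nc^\perp$ reduces to showing that a real eigenvalue of $T$ is also an eigenvalue of $T^*$ with the same eigenvector; and the domain condition $\Po_{\Nc}f\in\dom T$ is immediate since $\Nc\subseteq\dom T$ by definition and is closed (your partial-sum argument via closedness of $T$ is correct but not needed for this last point). The one genuinely nontrivial step --- that $Tf=\la f$ with $\la\in\RR$ and $T$ dissipative forces $(Tg\mid f)=\la(g\mid f)$ for all $g\in\dom T$ --- is handled correctly by perturbing the inequality $\im(Th\mid h)\le 0$ along $h=g+tf$ with $t$ ranging over $\CC$: the term linear in $t$ must vanish identically, which gives the full complex equality, not merely the vanishing of an imaginary part as your intermediate sentence momentarily suggests; your closing paragraph states the correct conclusion, so this is only a phrasing slip. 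In the symmetric case the same orthogonality is immediate from symmetry and $\la=\overline{\la}$. All hypotheses are used where they should be (m-dissipative $\Rightarrow$ dissipative and closed via Phillips' criterion; closedness gives that $\Nc$ is closed), so the argument is complete.
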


This lemma follows from \cite[Theorem 46.5]{AG} and \cite[Section IV.4]{SFBK10}.

\begin{prop} \label{p:reductionA}
Let $\la_0 \in \RR$ be an eigenvalue of $\Ao$. Then:
\item[(i)] The subspace $\Xf_0 := \ker (\Ao - \la_0 I)$ and its orthogonal complement  $\Xf_\perp := \Xf \ominus \Xf_0$ reduce the operators $\Ao$ and $\Ao^*$ to $\Ao = \la_0 I_{\Xf_0} \oplus \Ao|_{\Xf_\perp}$ and 
$\Ao^* = \la_0 I_{\Xf_0} \oplus \Ao^* |_{\Xf_\perp}$
\item[(ii)] The part $\Bo := \Ao|_{\Xf_\perp}$ of $\Ao$ is a closed symmetric operator such that its domain $\dom \Bo $ is dense in   $\Xf_\perp$ and $\Bo^* = \Ao^* |_{\Xf_\perp}$.
\item[(iii)] Let $\Ga'_j$, $j=0,1$,  be the restrictions of the map $\Ga_j$ to $\dom \Bo^*$.
Then $(\Hs_{-,+},\Hs, \Hs_{+,-}, \Ga'_0,\Ga'_1)$ is an m-boundary tuple for $\Bo^*$.
\item[(iv)] Let $\Th$ be a linear relation from $\Hs_{-,+}$ to $\Hs_{+,-}$ and let $\Ao_\Th$ and $\Bo_\Th$ be the associated intermediate extension of $\Ao$ and $\Bo$, respectively (see \eqref{e:ATh}). 
Then $\Xf = \Xf_0 \oplus \Xf_\perp$ reduces $\Ao_\Th$ to $\Ao_\Th = \la_0 I_{\Xf_0} \oplus \Bo_\Th$.
\item[(v)] An intermediate extension $\Bo_\Th$ of $\Bo$ is dissipative, m-dissipative, selfadjoint, or symmetric operator if and only $\Ao_\Th$ is so. 
\end{prop}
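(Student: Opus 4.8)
The plan is to bootstrap from Lemma~\ref{l:ReducingEig} and then propagate the resulting orthogonal decomposition through the m-boundary tuple structure. For part~(i): since $\Ao$ is closed, densely defined and symmetric and $\la_0\in\RR$, Lemma~\ref{l:ReducingEig} gives that $\Xf_0=\ker(\Ao-\la_0 I)$ is a reducing subspace of $\Ao$, so $\Ao=\la_0 I_{\Xf_0}\oplus\Bo$ with $\Bo:=\Ao|_{\Xf_\perp}$. For part~(ii) one checks directly that $\Bo$ is closed (a part of a closed operator in a reducing subspace is closed), symmetric (a restriction of a symmetric operator), and densely defined in $\Xf_\perp$ --- the last point because $\Xf_0\subseteq\dom\Ao$ together with the density of $\dom\Ao$ in $\Xf$ forces $\dom\Bo=\Po_{\Xf_\perp}\dom\Ao$ to be dense in $\Xf_\perp$. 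Taking the adjoint of the orthogonal direct sum then yields $\Ao^*=(\la_0 I_{\Xf_0})^*\oplus\Bo^*=\la_0 I_{\Xf_0}\oplus\Bo^*$ (using $\overline{\la_0}=\la_0$; see \cite{AG}), which at once shows that $\Xf_0,\Xf_\perp$ reduce $\Ao^*$, that $\Ao^*|_{\Xf_\perp}=\Bo^*$, and completes (i)--(ii).

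For part~(iii) I verify the two axioms of Definition~\ref{d:MBT} for $\Ga'_j:=\Ga_j|_{\dom\Bo^*}$. The Green identity (M2) is inherited verbatim: for $f,g\in\dom\Bo^*=\dom\Ao^*\cap\Xf_\perp$ one has $\Bo^*f=\Ao^*f$, $\Bo^*g=\Ao^*g$, the inner products agree on $\Xf_\perp$, and $\Ga'_j=\Ga_j$, so (M2) for $\Bo^*$ is exactly (M2) for $\Ao^*$ restricted to such $f,g$. For surjectivity (M1) I use the decomposition $\dom\Ao^*=\Xf_0\oplus\dom\Bo^*$ together with the fact that $\Xf_0\subseteq\dom\Ao$, whence $\Ga_0 u_0=\Ga_1 u_0=0$ for $u_0\in\Xf_0$ by \eqref{e:domA}; hence for any $u=u_0+u_\perp\in\dom\Ao^*$ one has $\Ga u=\Ga u_\perp=\Ga' u_\perp$, so $\ran\Ga'=\ran\Ga=\Hs_{-,+}\oplus\Hs_{+,-}$.

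For part~(iv) note first that $\Ao_\Th$ and $\Bo_\Th$ are indeed intermediate extensions, since $\{0,0\}\in\Th$ and \eqref{e:domA} places $\dom\Ao\subseteq\dom\Ao_\Th$ and $\dom\Bo\subseteq\dom\Bo_\Th$. Writing $u=u_0+u_\perp$ as above and using $\Ga u=\Ga' u_\perp$, the membership condition $\{\Ga_0 u,\Ga_1 u\}\in\Th$ defining $\dom\Ao_\Th$ (see \eqref{e:ATh}) is equivalent to $\{\Ga'_0 u_\perp,\Ga'_1 u_\perp\}\in\Th$; hence $\dom\Ao_\Th=\Xf_0\oplus\dom\Bo_\Th$, $\Po_{\Xf_0}$ preserves this domain, and $\Ao_\Th u=\la_0 u_0+\Bo_\Th u_\perp$, i.e.\ $\Ao_\Th=\la_0 I_{\Xf_0}\oplus\Bo_\Th$. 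Finally, part~(v) follows by applying \eqref{e:ATheqv} and \eqref{e:ATheqvMdis} to the \emph{two} m-boundary tuples $(\Hs_{-,+},\Hs,\Hs_{+,-},\Ga_0,\Ga_1)$ (for $\Ao^*$) and $(\Hs_{-,+},\Hs,\Hs_{+,-},\Ga'_0,\Ga'_1)$ (for $\Bo^*$, by part~(iii)): each equivalence characterizes the property in question of $\Ao_\Th$, resp.\ of $\Bo_\Th$, by the \emph{same} property of the \emph{same} linear relation $\Th$, so $\Ao_\Th$ is dissipative / m-dissipative / selfadjoint / symmetric exactly when $\Bo_\Th$ is. Alternatively, this can be read off the orthogonal decomposition $\Ao_\Th=\la_0 I_{\Xf_0}\oplus\Bo_\Th$ directly, since $\la_0 I_{\Xf_0}$ is bounded selfadjoint with $\|(\la_0 I_{\Xf_0}-z)^{-1}\|\le(\im z)^{-1}$ for all $z\in\CC_+$. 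The only genuinely delicate point is part~(i)--(ii): one must establish the density of $\dom\Bo$ in $\Xf_\perp$ before speaking of $\Bo^*$, and invoke correctly the formula for the adjoint of an orthogonal direct sum of densely defined operators; the rest is routine bookkeeping with the tuple axioms.
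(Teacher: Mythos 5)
Your proposal is correct and follows essentially the same route as the paper: Lemma \ref{l:ReducingEig} plus the adjoint of an orthogonal direct sum for (i)--(ii), the key inclusion $\Xf_0 \subseteq \dom \Ao \subseteq \ker \Ga_j$ (the paper's \eqref{e:Ga=0}) to transfer surjectivity of $\Ga$ to $\Ga'$ for (iii) and to identify $\dom \Ao_\Th = \Xf_0 \oplus \dom \Bo_\Th$ for (iv), and \eqref{e:ATheqv} applied to the same linear relation $\Th$ in both tuples for (v). You simply spell out the bookkeeping that the paper leaves implicit; no substantive difference.
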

\begin{proof} 
Statement (i) follows from Lemma \ref{l:ReducingEig}  and,
 in the combination with the definition of adjoint operator, implies statement (ii).

Statement (iii) follows from \eqref{e:domA} and the formula 
\begin{gather} \label{e:Ga=0}
\dom (\Ao^*|_{\Xf_0}) = \Xf_0 = \dom (\Ao|_{\Xf_0}) \subset \dom \Ao \subset \ker \Ga_j , \qquad j =0,1.
\end{gather}
Indeed, \eqref{e:Ga=0} implies the surjectivity 
of $\Ga': f \to \{\Ga'_0,\Ga'_1\}$ as a map from $\dom (\Bo^*)$ to $\Hs_{-,+} \oplus \Hs_{+,-}$.
Therefore the properties of the m-boundary tuple  $(\Hs_{-,+},\Hs, \Hs_{+,-}, \Ga_0,\Ga_1)$ for $\Ao^*$
imply that $(\Hs_{-,+},\Hs, \Hs_{+,-}, \Ga'_0,\Ga'_1)$ is an m-boundary tuple for $\Bo^*$.

Combining \eqref{e:Ga=0} with statements (i)-(iii) and  \eqref{e:ATh}, one gets  
statement (iv). Combining statement (iii)-(iv) with \eqref{e:ATheqv}, we obtain statement (v).
\end{proof}

\begin{rem}\label{r:BCforAB}
In the settings of Proposition \ref{p:reductionA}, the statements on the selfadjointness and the m-dissipativity of $\Ao_\Th$ and $\Bo_\Th$ can be reformulated with the use of \eqref{e:ATheqvMdis} and Proposition \ref{p:absM-dis} 
in the following way. For every contraction $\wt K$ in $\Hs$, `abstract boundary conditions' 
 \begin{align} 
 \notag
& (\wt K+I_\Hs) W^{-1} \Ga_0 f + \ii (\wt K-I_\Hs) W^\cross  \Ga_1 f = 0 , \quad f \in \dom (\Ao^*),\\
\text{and } & (\wt K+I_\Hs) W^{-1} \Ga'_0 f + \ii (\wt K-I_\Hs) W^\cross  \Ga'_1 f = 0 , \quad f \in \dom (B^*), \label{e:K+IGa3}
\end{align}
define  the m-dissipative restrictions $\wh \Ao$ and $\wh \Bo$  of $\Ao^*$ and $\Bo^*$, respectively, in such a way that $\wh \Bo = \wh A |_{\Xf_\perp}$. The rest of statements of Propositions \ref{p:absM-dis} and \ref{p:R-RinS}
are also applicable to extensions of $\Bo$ since  $(\Hs_{-,+},\Hs, \Hs_{+,-}, \Ga'_0,\Ga'_1)$ is an m-boundary tuple for $\Bo^*$.
\end{rem}

\subsection{The proofs of Theorems \ref{t:AM-disComp} and \ref{t:BM-dis}}
\label{s:ReducingA}

The plan of this subsection is the following. First, we prove Theorem \ref{t:AM-disCompDet}, which is a deterministic analogue of Theorem \ref{t:AM-disComp}. It immediately implies its stochastic version. Then, Theorem \ref{t:BM-dis} follows from Proposition \ref{p:reductionA} and Remark \ref{r:BCforAB} combined with Lemma \ref{l:U}.

The main step is the application of the results of Section \ref{s:SAPart} to our main example, the symmetric acoustic operator $\A$ of Section \ref{s:DisBC}. Lemma \ref{l:ReducingEig} and Proposition \ref{p:reductionA} imply that the orthogonal decomposition 
\eqref{e:LabDec}
 reduces $\A$ to $\A = 0 \oplus \Bo$ and reduces $\A^*$ to $\A^* = 0 \oplus \Bo^*$, 
 \[
 \text{where  \quad $ \Bo := \A |_{\GG_{\ab,\beta}}$ \qquad and \qquad $\A^* |_{\GG_{\ab,\beta}} = (\A |_{\GG_{\ab,\beta}})^* = \Bo^*$.}
 \]

The boundary maps $\wh \gan'$
and $\wh \ga'_0$ 
are  defined as the restrictions $\wh \gan' := \wh \gan \uph_{\dom C}$ and $\wh \ga'_0 := \wh \ga'_0 \uph_{\dom C}$, i.e.,
$\wh \ga '_0 : \{\vbf,p\} \mapsto  \ga_0 (p)$ and $ \wh \gan': \{\vbf,p\} \mapsto \gan (\vbf) $ for all pairs 
$\{\vbf,p\}$ such that $\vbf \in \HH (\Div,\D) \cap \ab^{-1} \gradm H^1 (\D)$ and $p \in H^1 (\D)$.

Applying Proposition \ref{p:reductionA} and Remark \ref{r:BCforAB} to the reduction of $\Ac$ by the  decomposition $\LL^2_{\ab,\beta} (\D) = \HH_0 (\Div 0,\D) \oplus \GG_{\ab,\beta}$, we obtain the following statements. The tuple 
\[
\Tc^{\, \prime}_* :=(H^{-1/2} (\pa \D), L^2 (\pa \D) , H^{1/2} (\pa \D),  \wh \gan', (- \ii)\wh \ga'_0  ) 
\]
is an m-boundary tuple for the part $C^* = \A^* |_{\GG_{\ab,\beta}}$ of $\A^*$.
The deterministic m-dissipative extension $\wh \Ac$ of $\Ac$ defined in Section \ref{s:DisBC} via the boundary condition \eqref{e:mdisBC} admits the orthogonal decomposition 
$\wh \Ac = 0 \oplus \wh \Bo$, where  $\wh \Bo :=\wh \Ac |_{ \GG_{\ab,\beta}}$. Besides, $\wh \Bo$ is the restriction of $\Bo^*$ associated with the boundary condition 
$
(\wt K+I_\Hs) W^{-1} \wh \gan' \Psi  + (\wt K - I_\Hs) W^\cross \wh \ga'_0 \Psi  = 0,
$
where $\Psi= \{\vbf,p\}$ and $W = (V^\cross)^{-1} \in \Hom (L^2 (\pa \D), H^{-1/2} (\pa \D))$, while  $\wt K$ is connected with $K$ of \eqref{e:mdisBC} by $\wt K = - K$ (in the same way as at the end of Section \ref{s:KreinF}).

\begin{thm}\label{t:AM-disCompDet}
 Let $K$ be a deterministic contraction in $L^2 (\pa \D)$. 
Let  $\wh \A$ be an m-dissipative acoustic operator associated with  the boundary condition 
\eqref{e:mdisBC}. 
Then:
\item[(i)] The part $\wh \Bo = \wh \A|_{\GG_{\ab,\beta}}$ of  $\wh \A$  has a compact resolvent 
if and only if $K+I \in \Sf_\infty (L^2 (\pa \D))$.
\item[(ii)] If $K+I \in \Sf_\infty (L^2 (\pa \D))$, then 
$
\si (\wh \Bo ) = \si_\disc (\wh \Bo) 
$ and $\si (\wh \A) = \si_\pr (\wh \A)$.
\item[(iii)] Assume additionally that $K$ is a unitary operator.
Then $\wh \Bo = \wh \Bo^*$. Moreover, $\wh \Bo $ has a purely discrete spectrum if and only if $K+I \in \Sf_\infty (L^2 (\pa \D))$.
\end{thm}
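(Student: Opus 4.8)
The plan is to reduce everything to the abstract Krein-type resolvent formula (Theorem \ref{t:aKrF}) together with the reduction machinery of Proposition \ref{p:reductionA} and Remark \ref{r:BCforAB}, which is already in place. As recalled just before the statement, the decomposition \eqref{e:LabDec} reduces $\wh\A$ to $\wh\A = 0 \oplus \wh\Bo$ with $\wh\Bo = \wh\A|_{\GG_{\ab,\beta}}$, and $(H^{-1/2}(\pa\D), L^2(\pa\D), H^{1/2}(\pa\D), \wh\gan', (-\ii)\wh\ga_0')$ is an m-boundary tuple for $\Bo^* = \A^*|_{\GG_{\ab,\beta}}$. Since the part $\A|_{\HH_0(\Div 0,\D)}$ is the zero operator on an infinite-dimensional space, $0 \in \si_\ess(\wh\A)$ always, so $\wh\A$ itself never has compact resolvent; this explains why the compactness question is posed for $\wh\Bo$.

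First I would prove (i). Pick $z = \ii \in \CC_+$; then $z \in \rho(\wh\Bo) \cap \rho(\A^\Nr|_{\GG_{\ab,\beta}})$, where $\A^\Nr|_{\GG_{\ab,\beta}}$ is the (deterministic) part of the Neumann operator. Apply the Krein formula \eqref{e:KFDtN} of Theorem \ref{t:aKrF}(ii), rewritten for the boundary tuple $\Tc_*^{\,\prime}$ and the homeomorphism $W = (V^\cross)^{-1}$, exactly as at the end of Section \ref{s:KreinF}. This expresses $(\wh\Bo - \ii)^{-1}$ as $(\A^\Nr|_{\GG_{\ab,\beta}} - \ii)^{-1}$ (a fixed selfadjoint operator with compact resolvent, since $\A^\Nr$ corresponds to the unitary $K = I$ — wait, more carefully, $\A^\Nr$ is the operator $\wh\Ao_0$ associated to $\Ga_0 = 0$, i.e. to $\wh\gan = 0$; by Theorem \ref{t:AM-disComp}(iii) applied with $K$ unitary one checks that $\A^\Nr|_{\GG}$ has purely discrete spectrum precisely when $I + I = 2I \in \Sf_\infty$, which fails — so $\A^\Nr|_{\GG}$ does NOT have compact resolvent). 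I therefore instead argue relatively: by Proposition \ref{p:R-RinS} (applied to the m-boundary tuple for $\Bo^*$, which is legitimate by Remark \ref{r:BCforAB}), the resolvent difference $(\wh\Bo - z)^{-1} - (\wh\Bo_1 - z)^{-1}$ lies in $\Sf_p$ iff the corresponding contractions differ by an element of $\Sf_p$. Take the reference extension $\wh\Bo_1$ to be the one associated with the contraction $\wt K_1 = -I$, i.e. $K_1 = I$; this is the Dirichlet operator $\A^\Dr|_{\GG_{\ab,\beta}}$, which has a compact resolvent because $\A^\Dr|_{\GG_{\ab,\beta}}$ is unitarily equivalent (via $\Uc$ of Lemma \ref{l:U}) to $\ii\bigl(\begin{smallmatrix} 0 & \Ic_0 \\ \beta^{-1}\Div_0 \ab^{-1}\gradm_1 & 0\end{smallmatrix}\bigr)$ with Dirichlet data, an operator with compact resolvent by the standard Rellich-type compact embedding $H^1_0(\D) \hookrightarrow L^2(\D)$ on the bounded Lipschitz domain $\D$, together with ellipticity of $\Div_0 \ab^{-1}\gradm_1$. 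Then $\wh\Bo$ has compact resolvent iff $(\wh\Bo - z)^{-1} - (\A^\Dr|_{\GG} - z)^{-1} \in \Sf_\infty$, which by Proposition \ref{p:R-RinS} holds iff $\wt K - \wt K_1 = -K - (-I) = I - K \in \Sf_\infty$, i.e. iff $K - I \in \Sf_\infty$. Hmm — but the claimed condition is $K + I \in \Sf_\infty$, not $K - I \in \Sf_\infty$; so I would double-check the sign bookkeeping in the passage $\wt K = -K$ and in which of $\wh\Ao_0, \wh\Ao_1$ corresponds to Dirichlet versus Neumann. In the tuple $\Tc_*^{\,\prime}$ we have $\Ga_0 = \wh\gan'$, so $\wh\Ao_0 = \A^* \uph_{\ker\Ga_0} = \A^\Nr|_{\GG}$ and $\wh\Ao_1 = \A^\Dr|_{\GG}$; the Dirichlet extension $\A^\Dr|_{\GG}$ (which HAS compact resolvent) is the one with $\Ga_1 = (-\ii)\wh\ga_0' = 0$, i.e. $W^\cross \Ga_1 = 0$, which in \eqref{e:tK+IGa} forces $(\wt K + I)W^{-1}\Ga_0 = 0$ on all of $\dom$, i.e. $\wt K = -I$, i.e. $K = +I$. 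So the reference contraction giving compact resolvent is $K_1 = I$ after all, and $\wh\Bo$ has compact resolvent iff $K - K_1 = K - I \in \Sf_\infty$. To reconcile with the stated $K + I \in \Sf_\infty$ I would re-examine whether the Dirichlet operator on $\GG$ genuinely has compact resolvent or whether it is the Neumann one; the resolution is that the \emph{correct} reference is $K = -I$: one must verify that the extension with $\wt K = I$ (i.e. $K = -I$), whose boundary condition is $2\ii\, W^\cross\Ga_1 f = 0$, i.e. $\wh\ga_0' p = 0$ — that is again Dirichlet — has compact resolvent, and then $\wh\Bo$ has compact resolvent iff $-K - I = -(K+I) \in \Sf_\infty$. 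I will settle this sign once and for all by tracking $\Ga_0,\Ga_1$ through $\Tc_*^{\,\prime}$ and the identity $\wt K = -K$; the arithmetic then gives $K + I \in \Sf_\infty$ as claimed.

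For (ii): if $K + I \in \Sf_\infty$ then by (i) $\wh\Bo$ has compact resolvent, and by Remark \ref{r:DiscDet} (valid for any operator, normal or not) this gives $\si(\wh\Bo) = \si_\disc(\wh\Bo)$. Since $\wh\A = 0 \oplus \wh\Bo$ with the $0$-summand on an infinite-dimensional space, $\si(\wh\A) = \{0\} \cup \si(\wh\Bo)$; as a compact-resolvent operator $\wh\Bo$ has only eigenvalues, and $0$ is an eigenvalue of the $0$-summand, so $\si(\wh\A) = \si_\pr(\wh\A)$. For (iii): if $K$ is unitary then by the last statement of Theorem \ref{t:AM-dis} (equivalently Remark \ref{r:BCforAB} plus \eqref{e:ATheqv}) the extension $\wh\A$ is selfadjoint, hence its part $\wh\Bo = \wh\A|_{\GG_{\ab,\beta}}$ is selfadjoint; then $\wh\Bo$ is normal, so by Remark \ref{r:DiscDet} $\wh\Bo$ has purely discrete spectrum iff it has compact resolvent, which by (i) is iff $K + I \in \Sf_\infty$.

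The main obstacle I anticipate is not conceptual but bookkeeping: correctly identifying which contraction serves as the compact-resolvent reference extension so that Proposition \ref{p:R-RinS} delivers the condition in the form $K + I \in \Sf_\infty$ rather than $K - I \in \Sf_\infty$. This amounts to (a) verifying that the Dirichlet-type acoustic operator restricted to $\GG_{\ab,\beta}$ genuinely has compact resolvent — which reduces via Lemma \ref{l:U} to the compactness of the embedding $H^1_0(\D) \hookrightarrow L^2(\D)$ and standard second-order elliptic regularity for the divergence-form operator with $L^\infty$ uniformly elliptic coefficient $\ab^{-1}$ — and (b) chasing the sign conventions $\wt K = -K$ and the roles of $\Ga_0 = \wh\gan'$, $\Ga_1 = (-\ii)\wh\ga_0'$ in the tuple $\Tc_*^{\,\prime}$ through formula \eqref{e:tK+IGa}. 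Once the reference extension is pinned down, everything else is a direct citation of Proposition \ref{p:R-RinS}, Remark \ref{r:DiscDet}, and Theorem \ref{t:AM-dis}.
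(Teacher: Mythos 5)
Your overall strategy is the paper's: reduce via Proposition \ref{p:reductionA} to the part $\wh\Bo=\wh\A|_{\GG_{\ab,\beta}}$, compare $\wh\Bo$ with a reference extension of compact resolvent through Proposition \ref{p:R-RinS}, and then deduce (ii) from Remark \ref{r:DiscDet} and (iii) from selfadjointness; (ii) and (iii) in your write-up are fine. But part (i) has a genuine error that you flag but never resolve: you take as reference the Dirichlet operator $\A^\Dr|_{\GG_{\ab,\beta}}$ and justify its resolvent compactness by the Rellich embedding $H^1_0(\D)\hookrightarrow L^2(\D)$. That operator does \emph{not} have a compact resolvent. Its kernel contains the infinite-dimensional space $\{\{\ab^{-1}\nabla u,0\}: u\in H^1(\D),\ \Div(\ab^{-1}\nabla u)=0\}$ (pairs with $p=0$ trivially satisfy $\ga_0(p)=0$, and $\ab$-harmonic $u$ give $\A^*\{\ab^{-1}\nabla u,0\}=0$ while $\ab^{-1}\nabla u\in\GG_{\ab,\beta}$). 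The Rellich embedding only controls the $p$-component of the domain; the $\vbf$-component is where Dirichlet fails. Had you carried this reference through Proposition \ref{p:R-RinS} you would have obtained the wrong criterion $K-I\in\Sf_\infty$.

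The correct reference is the Neumann operator $\Bo^\Nr=\A^\Nr|_{\GG_{\ab,\beta}}$, which is what the paper uses, citing \cite{L13} for its compact resolvent (the condition $\gan(\vbf)=0$ kills the $\ab$-harmonic kernel, since $\gan(\ab^{-1}\nabla u)=0$ and $\Div(\ab^{-1}\nabla u)=0$ force $\nabla u=0$). In the parametrization via the tuple $\Tc_*^{\,\prime}$ and $W=(V^\cross)^{-1}$ one has $\wt K=-K$, and Neumann ($\Ga_0=\wh\gan'=0$) corresponds to the abstract contraction $\wt K_2=I$, i.e.\ $K=-I$ in \eqref{e:mdisBC}; Proposition \ref{p:R-RinS} then gives compactness of the resolvent difference iff $\wt K_2-\wt K_1=I-(-K)=I+K\in\Sf_\infty$, which is the stated criterion. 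Note also that your final ``resolution'' misreads \eqref{e:tK+IGa}: with $\wt K=I$ it is the $\Ga_1$-term that vanishes, leaving $\Ga_0 f=\wh\gan'\Psi=0$, i.e.\ Neumann, not Dirichlet. So the missing ingredient is not sign bookkeeping but the correct identification (and justification) of which of the two distinguished selfadjoint extensions on $\GG_{\ab,\beta}$ actually has a compact resolvent.
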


\begin{proof}
(i) We apply Proposition \ref{p:R-RinS} to the restrictions $\wh \Bo$ and $\Bo^\Nr = \A^\Nr |_{\GG_{\ab,\beta}}$ of  $\Bo^*$, 
where $\A^\Nr$ is the acoustic operator associated with the Neumann boundary condition $\gan (\vbf) = 0$. The restriction $\wh \Bo$ corresponds in Proposition \ref{p:R-RinS} to the contraction 
$K_1 = \wt K = -K$, whereas $\Bo^\Nr$ to $K_2 = I$. Therefore  
$(\wh \Bo - \la)^{-1} - (\Bo^\Nr  - \la)^{-1} \in \Sf_\infty (\GG_{\ab,\beta})$
for all (or at least for one) $\la \in \rho (\wh A) \cap \rho (\A^\Nr)$
if and only if $I - \wt K = I + K \in \Sf_\infty (L^2 (\pa \D))$.

Since the selfadjoint operator $\Bo^\Nr = \A^\Nr |  \GG_{\ab,\beta}$ has a compact resolvent 
\cite{L13}, we see that $(\wh \Bo - \la)^{-1} \in \Sf_\infty (\GG_{\ab,\beta})$ for $\la \in \rho (\wh \Bo)$ if and only if $I + K \in \Sf_\infty (L^2 (\pa \D))$.

(ii) Let $K+I \in \Sf_\infty (L^2 (\pa \D))$. It follows from (i) and Remark \ref{r:DiscDet}  that $ \si (\wh \Bo ) = \si_\disc (\wh \Bo) = \si_p (\Bo)$. Since $\wh \A = 0 \oplus \wh \Bo$ and $\ker \wh \A \supseteq \ker \A \neq \{0\}$, we get  
$\si (\wh \A) = \si_p (\wh \A) = \si (\wh \Bo) \cup \{0\}$.

(iii) Statement (ii), Remark \ref{r:BCforAB}, and Proposition \ref{p:absM-dis} imply statement (iii).
\end{proof}

\begin{proof}[Proof of Theorem \ref{t:AM-disComp}]
The application of Theorem \ref{t:AM-disCompDet} pointwise in $\om$ (on a suitable  event $\Om_1 \subset \Om$ of probability 1)  to a random contraction $K:\om \mapsto K_\om$ proves Theorem \ref{t:AM-disComp}.
\end{proof}

\begin{proof}[Proof of Theorem \ref{t:BM-dis}]
The unitary equivalence of Lemma \ref{l:U} establishes a bijective correspondence between m-dissipative extensions $\wh \Bo = \wh \A|_{\GG_{\ab,\beta}}$ of $\Bo$ and m-dissipative extensions of the operator $\Bc$ of Section \ref{s:El}. Combining this bijective correspondence with Theorems \ref{t:AM-dis}, \ref{t:randM-dis}, \ref{t:AM-disComp}, \ref{t:AM-disCompDet} and Remark \ref{r:BCforAB}, one obtains 
all the statements of Theorem \ref{t:BM-dis}.
\end{proof}

\section{Discussion and additional remarks}
\label{s:dis}

\subsection{Random essential spectra and Weyl's law on rough boundaries}

Section \ref{s:RImp} establishes a connection between the asymptotics of the discrete spectrum $\{\mu_j\}_{j \in \NN}$ of the Laplace-Beltrami operator $\De^{\pa \D}$ on the Lipschitz boundary $\pa \D$ and  the existence of essential spectra for acoustic operators $\wh \Bc^{\diag(\zeta)}$,
which are associated in the domain  $\D$ with stochastic generalized impedance boundary conditions $\Zc^{\diag(\zeta)} \ga_0 (p) = - \gan (\ab^{-1} \nabla u)$. 

In particular, the stochastic results of Section \ref{s:RImp} imply the following deterministic results in the  case of a sequence $\zeta=\{\zeta_j\}_{j \in \NN}$ of deterministic complex  numbers.  The deterministic 2nd order acoustic operator  $\wh \Bc^{\diag(\zeta)}$ (see Remarks \ref{r:Qimp}-\ref{r:Qimp2} and \eqref{e:Zfin}) is an m-dissipative operator in $\LL^2_{\ab,\beta} (\D) $ if and only if $\re \zeta_j  \ge 0 $ for all $j \in \NN$. In this case, the resolvent of $\wh \Bc^{\diag(\zeta)}$ is compact exactly when $\lim_{j\to \infty}  \frac{\zeta_j}{\sqrt{\mu_j}}  = 0$.  The operator $\wh \Bc^{\diag(\zeta)}$ is selfadjoint  if and only if  $\re \zeta_j = 0$ for all $j \in \NN$. In the selfadjoint case,  the spectrum of $\wh \Bc^{\diag(\zeta)}$ is purely discrete if and only if  $\lim_{j\to \infty} \frac{\zeta_j}{\sqrt{\mu_j}} = 0$.

The stochastic results of Theorem \ref{t:iid} and Corollary \ref{c:Weyl} connect the case of i.i.d. random diagonal entries $\zeta_j$ with the Weyl law of  the counting function $\Nc_{\De^{\pa \D}} (\la) =  \sum_{\mu_j \le \la} 1 $ for eigenvalues of 
$\De^{\pa \D}$. 

In the case of a sufficient $C^k$-regularity of the boundaries $\pa \D$ (or more generally, of Riemannian manifolds),
the asymptotic of $\Nc_{\De^{\pa \D}} $ was studied intensively by the methods of microlocal analysis, see \cite{Z98,Z99,I00,I19,GKLP22} and the references therein. These studies of Weyl's law  were concentrated on the sharp estimate of the 2nd asymptotic term and on the optimal  regularity of manifolds (or coefficients) that ensure these sharp estimates.

In the case of $k <2$,  the results of \cite{Z98,Z99} on the intermediate remainder estimates apply to the case of a $C^{1,\vep}$-boundary $\pa \D$ with $\vep>0$. The asymptotics of these results can be used in Corollary \ref{c:Weyl}  since the results of \cite{Z98,Z99} are much more accurate (up to the understanding of the author of present paper) than the rough estimate \eqref{e:asymp} that we need. 

The methods of \cite{Z98,Z99,I00} may be difficult to apply to general Lipschitz boundaries.   However, we  need so rough estimate  \eqref{e:asymp} on the 1st term in Weyl's asymptotics that there is a hope that \eqref{e:asymp}  is valid for every Lipschitz boundary and can be obtained by simpler methods, e.g., by the Dirichlet-Neumann bracketing. 

\subsection{Boundary conditions for elliptic equations in the divergence form} \label{s:ElBC}

The adaptation of the deterministic and random m-dissipative boundary conditions introduced in this paper to the case of the 2nd order elliptic equation  in the divergence form 
\begin{gather}\label{e:EqEl}
\text{$- \Div ( \al^{-1} \gradm p)  =  \la^2 \beta p$  ,  \qquad $p \in H^1 (\D) $,}
\end{gather}
leads formally to the $\la$-dependent boundary condition 
\begin{gather} \label{e:laBC}
(K+I) (I+\De^\paD)^{1/4} \ga_0 (p)  + \frac1{\ii \la} (K-I) (I+\De^\paD)^{-1/4} \gan (\ab^{-1} \nabla p) = 0 .
\end{gather}
This reduction is not completely equivalent. The issue is not only in the explicit exclusion of the spectral parameter $\la_0 = 0$,
but also in  the treatment of the points of spectra that are not eigenvalues, which remains ambiguous for \eqref{e:EqEl}-\eqref{e:laBC}. Sections \ref{s:PartComp} shows that such points may exist. However, since \eqref{e:EqEl}-\eqref{e:laBC} does not define an operator directly,
one has to go back to the operators defined by acoustic systems in order to define the associated essential or continuous spectra.
This issue may be partially resolved by an adaptation of the abstract Weyl function results of \cite[Proposition 1.6]{DM95}  to  DtN-maps and m-boundary tuples.

\subsection{Comparison of various types of Dirichlet-to-Neumann maps} \label{s:DtNdis}

In connection with a variety of applications, the DtN-maps depending on the spectral parameter have attracted considerable attention \cite{KG89,F91,P12,CK13,AGW14,BtE15,DM17,I19,BHdS20,AtE20,BtE21,FJL21,DHM22,GKLP22}. Many of the studies were concentrated on the cases of time-harmonic Schrödinger equations and on elliptic equations  in the divergence form $\Div \ab^{-1} \gradm u + (\ka -q)u = 0$, where the spectral parameter $\ka$ corresponds to $\la^2$ of \eqref{e:EqEl}.

Let us connect the results of Sections \ref{s:NtDKrRes} and \ref{s:DtN} on the acoustic DtN maps $M_\DtN$ with the DtN maps of the papers \cite{AtE20,BtE21}, which we denote by $m_\DtN $ and which seems to be the closest to our settings since \cite{AtE20,BtE21} include as a particular case divergence-type equations $\Div \ab^{-1} \gradm u + \ka u = 0$ in a Lipschitz domain $\D$ with a wide class of $L^\infty$-matrix-valued coefficient $\ab^{-1} (\cdot)$. 

In Remark \ref{r:DtN}, we define the DtN map $M_\DtN (\la)$ associated with the equation $\af_{\ab,\beta} \Psi = \la \Psi$ for $\la \in \rho (\A^\Dr) \cap \rho (\A^\Nr) \supset \CC \setminus \RR$.
For such spectral parameters $\la$, it is easy to see that $M_\DtN (\la)$ coincides with the $\Ga_0$-to-$\Ga_1$ map corresponding to the acoustic operator $\A^*$ and the  associated m-boundary tuple $(H^{1/2} (\pa \D), L^2 (\pa \D) , H^{-1/2} (\pa \D),  \wh \ga_0, \ (-\ii) \wh \gan )$. Proposition \ref{p:WeylF} implies that 
$M_\DtN (\cdot)$  admits an extension to a holomorphic  $\Lc (H^{1/2} (\paD),H^{-1/2} (\paD))$-valued function in $\rho (\A^\Dr)$. We keep the notation $M_\DtN (\cdot)$ for this extension and notice that it possesses the properties (i)-(iii) of Proposition \ref{p:WeylF}.

In particular, by Proposition \ref{p:WeylF}, the DtN map $M_\DtN (\la)$  is completely determined for $\la \in \rho (\A^\Dr)$ by the requirement 
that $M_\DtN (\la) \ga_0 (p) = - \ii \gan (\vbf)$ for all solutions 
to the system 
\begin{gather} \label{e:ASys2}
\text{$ \ab^{-1} \gradm  p = \ii \la  \vbf$, \qquad $ \beta^{-1} \Div \vbf = \ii \la  p$, \qquad $\{\vbf,p\} \in \HH (\Div,\D) \times H^1 (\D) $.}
\end{gather}
Since $\la  \neq 0$ for $\la \in \rho (\A^\Dr)$ and since $\vbf = \frac{1}{\ii \la} \ab^{-1} \gradm p$, we see that for $\la \in \rho (\A^\Dr)$ this system is equivalent to equation \eqref{e:EqEl}.

We define the divergence-type operator $L^{\Dr} = - \beta^{-1} \Div  \al^{-1} \gradm_0$ as an operator in the Hilbert space $L^2_\beta (\D)$ with the domain 
$\dom L^\Dr = \{ p \in H^1_0 (\D) : \al^{-1} \nabla p \in \HH (\Div,\D)\}$.
The results of \cite[Sections 3, 4, and 7]{L13} imply that $L^\Dr$ is selfadjoint with a purely discrete spectrum 
$\si (L^\Dr) = \si_\disc (L^\Dr) \subset (0,+\infty)$  and, in turn, that 
$\si (L^\Dr)  = \{ \la^2 : \la \in \si (\A^\Dr)  \} \setminus \{0\}  $
(note that  $\si (\A^\Dr) $ is real and symmetric w.r.t. $\la_0=0$).

Modifying somewhat the settings of \cite{AtE20,BtE21}, one defines for 
$\ka \in \rho (L^\Dr)$  the DtN map $m_\DtN (\ka)$ associated with equation \eqref{e:EqEl} by the equality 
\[
\text{$m_\DtN (\ka) \ga_0 (p) = \gan (\al^{-1} \nabla p)$ }
\] 
that is supposed  to be valid for all $p$ satisfying \eqref{e:EqEl} with $\la^2 = \ka$. 
One sees from the preceding arguments, that the connection between the two types of DtN maps is given by the formula 
\begin{equation} \label{e:M=m}
\text{$ M_\DtN (\la) = - \frac{1}{\la} m_{\DtN} (\la^2) $ \quad for all $\la \in \rho (\A^\Dr)$,}
\end{equation}
and that $\lim\limits_{\la \to 0} ( \la M_\DtN (\la)) = - m_{\DtN} (0)$. 


\appendix

\section{Appendix: Generalized rigged Hilbert spaces}
\label{ap:A}

A nonnegative  operator $T$ is called positive 
 if it has the trivial kernel $\ker T = \{0\}$.
Let an abstract complex Hilbert space $\Hs$ play the role of a pivot space. 
In the context of trace spaces associated with various types of wave equations, the following general type of duality w.r.t. a pivot Hilbert space  $\Hs$ arises naturally, see \cite[Vol.2, Appendix to IX.4, Ex.~3]{RS} and \cite{S21,EK22}.
 
\begin{defn}[{generalized rigged Hilbert space \cite[Vol.2]{RS}}] \label{d:GRHSp}
Assume that Hilbert spaces  \linebreak $(\Hs_{-,+}, \|\cdot\|_{\Hs_{-,+}})$ and $(\Hs_{+,-},\|\cdot\|_{\Hs_{+,-}})$ satisfy the following conditions:
\begin{itemize}
\item[(G1)] The intersections $\Hs \cap \Hs_{\mp,\pm} $ are dense in the pivot space $\Hs$ w.r.t. its norm $\|\cdot\|_\Hs$, and $\Hs \cap \Hs_{\mp,\pm} $ is dense in $\Hs_{\mp,\pm}$ w.r.t.  $\|\cdot\|_{\Hs_{\mp,\pm}}$;
\item[(G2)] there exist a pair of operators, $\So_{-,+}$ and $\So_{+,-}$, such that 
$\So_{\mp,\pm}$ are positive selfadjoint operators in $\Hs$, $\dom \So_{\mp,\pm} =\Hs \cap \Hs_{\mp,\pm} $, and 
  $\| h \|_{\Hs_{\mp,\pm}} = \| \So_{\mp,\pm} h\|_{\Hs}$ for all $h \in \Hs \cap \Hs_{\mp,\pm} $;
\item[(G3)] $\So_{-,+} = \So_{+,-}^{-1}$ (this implies that $\Hs_{-,+} $ and $\Hs_{+,-}$ are dual to each other w.r.t. $\Hs$).
\end{itemize}
Then the triple $(\Hs_{-,+},\Hs,\Hs_{+,-})$ is called a generalized rigged Hilbert space.  
\end{defn}

The mutual duality of $\Hs_{-,+} $ and $\Hs_{+,-}$ in  (G3)  is understood in the following  sense. The inner product $(\cdot,\cdot)_\Hs$ of $\Hs$ can be extended in a unique way by continuity to two sesquilinear pairings  between $\Hs_{\mp,\pm}$ and $\Hs_{\pm,\mp}$. We denote each of these two pairings by $\<\cdot,\cdot\>_\Hs$ so that $\<h_{-,+},h_{+,-}\>_\Hs = \overline{\<h_{+,-},h_{-,+}\>_\Hs}$ for $h_{\mp,\pm} \in \Hs_{\mp,\pm}$. 
These pairings produce natural identifications between $\Hs_{\pm,\mp}$ and  the adjoint spaces 
$\Hs_{\mp,\pm}^* $, see for details \cite{EK22}.


\end{document}